\title{Ancestral branching, cut-and-paste algorithms and associated tree and partition-valued processes}
\author{Harry Crane}
\date{}
\def\sup{\mathop{\rm sup}\nolimits}
\def\per{\mathop{\rm per}\nolimits}
\def\PD{\mathop{\rm PD}\nolimits}
\def\masspartition{\mathop{\mathcal{P}_{\mathrm{m}}}\nolimits}
\def\masspartitionk{\mathop{\mathcal{P}_{\mathrm{m}}^{(k)}}\nolimits}
\def\partitionsn{\mathop{\mathcal{P}_{[n]}}\nolimits}
\def\partitionsnk{\mathop{\mathcal{P}_{[n]}^{(k)}}\nolimits}
\def\partitionsk{\mathop{\mathcal{P}^{(k)}}\nolimits}
\def\anc{\mathop{\mbox{anc}}\nolimits}
\def\root{\mathop{\mbox{root}}\nolimits}
\def\pa{\mathop{\mbox{pa}}\nolimits}
\def\frag{\mathop{\mbox{frag}}\nolimits}
\def\rp{\mathop{\mbox{rp}}\nolimits}
\def\cp{\mathop{\mbox{CP}}\nolimits}
\def\part{\mathop{\mbox{Part}}\nolimits}
\def\subdot{\hbox{\bf.}}
\def\trees{\mathop{\mathcal{T}}\nolimits}
\def\treesn{\mathop{\mathcal{T}_n}\nolimits}
\def\treesk{\mathop{\mathcal{T}^{(k)}}\nolimits}
\def\treesnk{\mathop{\mathcal{T}_n^{(k)}}\nolimits}
\newtheorem{thm}{Theorem}[section]
\newtheorem{lemma}[thm]{Lemma}
\newtheorem{prop}[thm]{Proposition}
\newtheorem{cor}[thm]{Corollary}
\newtheorem{defn}[thm]{Definition}
\begin{document}
\maketitle




\begin{abstract}
We introduce an algorithm for generating a random sequence of fragmentation trees, which we call the ancestral branching algorithm.  This algorithm builds on the recursive partitioning structure of a tree and gives rise to an associated family of Markovian transition kernels whose finite-dimensional transition probabilities can be written in closed-form as the product over partition-valued Markov kernels.  The associated tree-valued Markov process is infinitely exchangeable provided its associated partition-valued kernel is infinitely exchangeable.  We also identify a transition procedure on partitions, called the cut-and-paste algorithm, which corresponds to a previously studied partition-valued Markov process on partitions with a bounded number of blocks.  Specifically, we discuss the corresponding family of tree-valued Markov kernels generated by the combination of both the ancestral branching and cut-and-paste transition probabilities and show results for the equilibrium measure of this process, as well as its associated mass fragmentation-valued and weighted tree-valued processes.
\end{abstract}

\section{Some preliminaries}\label{section:preliminaries}
A {\em set partition} $B$ of the natural numbers $\mathbb{N}$ is a collection $\{B_1,B_2,\ldots\}$ of disjoint non-empty subsets of $\mathbb{N}$, called {\em blocks}, such that $\bigcup_iB_i=\mathbb{N}$.  In general, we assume the blocks of $B$ are unordered, but whenever we wish to emphasize that blocks are listed in a particular order we write $B=(B_1,B_2,\ldots)$.  Write $\mathcal{P}$ to denote the space of set partitions of $\mathbb{N}$.

For $B\in\mathcal{P}$ and $b\in B$, $\#B$ is the number of blocks of $B$ and $\#b$ is the number of elements of $b$.  We write $\partitionsk$ to denote the space of partitions of $\mathbb{N}$ with at most $k\geq1$ blocks, i.e.\ $\partitionsk:=\{B\in\mathcal{P}:\#B\leq k\}$.  For a partition $B$ with blocks $\{B_1,B_2,\ldots\}$ and any $A\subset\mathbb{N}$, let $B_{|A}$ denote the {\em restriction} of $B$ to $A$, i.e.\ $B_{|A}:=\{B_i\cap A: i\geq1\}$ (excluding the empty set).  We write $\mathcal{P}_A$ and $\mathcal{P}_A^{(k)}$ to denote the restriction to $A$ of $\mathcal{P}$ and $\partitionsk$ respectively.  In particular, for $n\in\mathbb{N}$, $\partitionsn$ and $\partitionsnk$ are the restriction to $[n]:=\{1,\ldots,n\}$ of $\mathcal{P}$ and $\mathcal{P}^{(k)}$ respectively.

For each $n\in\mathbb{N}$, we define the {\em deletion} operation $D_n:2^{\mathbb{N}}\rightarrow2^{\mathbb{N}}$ which acts on subsets of $\mathbb{N}$ by removing $\{n\}$ from $A$, i.e.\ $A\mapsto D_nA:=A\backslash\{n\}$ for each $A\subset\mathbb{N}$.  In general, for $A,B\subset\mathbb{N}$ non-empty, $D_BA:=A\backslash B=A-B=A\cap B^c.$  For each $n\geq1$, we define the deletion operation on partitions $D_{n,n+1}:\mathcal{P}_{[n+1]}\rightarrow\partitionsn$ in terms of $D_{n+1}$ by $D_{n,n+1}B\equiv B_{|[n]}:=\{D_{n+1}b:b\in B\}$ for every $B\in\mathcal{P}_{[n+1]}$, and for $m<n$ define $D_{m,n}:=D_{m,m+1}\circ\cdots\circ D_{n-1,n}$.  The finite spaces $(\partitionsn,n\geq1)$ together with all deletion $(D_{m,n},m\leq n)$ and permutation maps, and their compositions, defines a {\em projective system} of set partitions.

A sequence $(B_1,\ldots)$ such that $B_n\in\partitionsn$ for each $n\geq1$ is said to be {\em compatible} if $B_n=D_{n,n+1}B_{n+1}$ for each $n\geq1$.  Any $B\in\mathcal{P}$ can be represented as the compatible sequence of its finite restrictions, $(B_{|[n]},n\geq1)$, and we often write $B:=(B_{|[n]},n\geq1)$.

\subsection{Fragmentation trees}
For any subset $A\subset\mathbb{N}$, a collection of non-empty subsets $T\subset2^A$, the power set of $A$, is an $A$-labeled {\em rooted tree} if
\begin{itemize}
	\item[(i)] $A\in T$, called the {\em root} of $T$ and denoted $\root(T)=A$, and
	\item[(ii)] $A,B\in T$ implies $A\cap B\in\{\emptyset,A,B\}$.  That is, either $A$ and $B$ are disjoint or one is a subset of the other.
\end{itemize}
If $T$ contains all singleton subsets of $A$, $T$ is called a {\em fragmentation tree}.  Throughout the rest of this paper, the word {\em tree} and {\em fragmentation} are both understood to mean fragmentation tree.  We write $\mathcal{T}_A$ to denote the space of fragmentations of $A$ and $\mathcal{T}\equiv\mathcal{T}_{\mathbb{N}}$ to denote the space of fragmentations of $\mathbb{N}$.

As a collection of subsets of $A\subset\mathbb{N}$, the elements of $T\in\mathcal{T}_A$ are partially ordered by {\em inclusion}.  That is, if $A,B\in T$ such that $A\subset B$, then the intervals $[A,B],(A,B],$ and $[A,B)$ are well-defined subsets of $T$.  This partial ordering induces a natural genealogical interpretation of the relationships among the elements of a tree.  For each $t\in T$, the subset $\anc(t):=(t,A]:=\{s\in T:t\subset s\}$ denotes the set of {\em ancestors} of $t$.  Note that $\anc(\root(T))=\emptyset$ and for each $t\neq\root(T)$, $\anc(t)$ has a least element denoted by $\pa(t):=\min\anc(t)$, the {\em parent} of $t$.

Conversely, except for the singleton elements of $T$, each $t\in T$ is the parent of some collection of subsets of $T$, called the {\em children} of $t$, which is given by $\pa^{-1}(t):=\frag(t):=\{t'\in T:\pa(t')=t\}$.  For finite $A\subset\mathbb{N}$ and $T\in\mathcal{T}_A$, $\frag(t)$ forms a non-trivial partition of $t$ for each non-singleton $t\in T$.  In particular, for each finite subset $A\subset\mathbb{N}$ and any tree $T\in\mathcal{T}_A$, the children of $\root(T)$ form a well-defined {\em root partition}, denoted $\Pi_T:=\rp(T):=\frag(\root(T)).$  The {\em fragmentation degree} of $T$ is given by $\max_{t\in T}\#\frag(t)$, which may be infinite.  For $k\geq1$, we write $\mathcal{T}_A^{(k)}$ to denote the collection of trees of $A$ with fragmentation degree at most $k$.

For any subset $S\subset A$, the {\em restriction} of $T\in\mathcal{T}_A$ to $S$ is defined by $T_{|S}:=\{S\cap t:t\in T\}$ (excluding the empty set), the {\em reduced sub-tree} of Aldous \cite{AldousCRTI}.  Recall the deletion operation $D_S:2^{\mathbb{N}}\rightarrow2^{\mathbb{N}}$ defined above by {\em restriction to the complement of} $S$.  For any tree $T\in\mathcal{T}_A$ and $S\subset A$, $D_ST:=\{D_St:t\in T\}=\{t\cap S^c:t\in T\}\equiv T_{|A\cap S^c}$.  We use the notation $D_{n,n+1}:\mathcal{T}_{n+1}\rightarrow\mathcal{T}_n$ to denote the operation $D_{n,n+1}T:=T_{|[n]}$ on trees.  Note that the apparent overloading of $D_{n,n+1}$ as a function on both $\mathcal{P}_{[n+1]}$ and $\mathcal{T}_{n+1}$ should cause no confusion as it is fundamentally defined, in both cases, as a function on collections of subsets of $\mathbb{N}$ through the set operation $D_{n+1}$.

As in the description of partitions of $\mathbb{N}$, any fragmentation $T\in\mathcal{T}$ can be expressed as a compatible sequence $(T_{|[n]},n\geq1)$ of reduced subtrees on the projective system of $[n]$-labeled trees $(\mathcal{T}_n,n\geq1)$ together with deletion $(D_{m,n},m\leq n)$ and permutation maps.  For $T\in\mathcal{T}$, we often write $T:=(T_{|[n]},n\geq1)$.
\section{Summary of main results}\label{section:summary of main results}
Our main result is the description of an explicit random algorithm for generating a sequence of fragmentation trees and conditions under which this algorithm characterizes an infinitely exchangeable Markov process on $\mathcal{T}$, which turns out to be quite general.  Later, we discuss a special subclass of this family of tree-valued processes for which we can establish the Feller property and existence of associated processes on mass fragmentations and weighted trees.  This subfamily can give rise to an infinitely exchangeable process on, for example, binary trees, which could have implications in certain areas of inference for unknown phylogenetic trees.  The associated weighted tree-valued process may also be applicable to certain aspects of hidden Markov modeling in a phylogenetic setting.

Previously, random algorithms, e.g.\ subtree prune-regraft (SPR), genetic algorithms, neighbor-joining, etc., have been described in the context of Markov chain Monte Carlo (MCMC) and searching the space of trees in the context of inference of unknown phylogenetic trees, see e.g.\ Felsenstein \cite{Felsenstein2004} for an overview.    In particular, Evans and Winter \cite{EvansWinter2006} study a tree-valued process based on an SPR algorithm which is reversible with respect to Aldous's continuum random tree (CRT) \cite{AldousCRTI}.  Previously, Aldous and Pitman \cite{AldousPitmanTree} studied a tree-valued process based on SPR and its connection to the Galton-Watson process.

 Below we introduce a random algorithm, which we call the {\em ancestral branching} algorithm, which is of a different nature than those previously studied in this context and generates different sample paths on the space of fragmentation trees than its predecessors.  This procedure admits an explicit expression for finite-dimensional Markovian transition probabilities which is of an intuitive form, and can be related to the notion of successive partitioning of a set which is common in the study of fragmentation processes.  We subsequently show a construction of an infinitely exchangeable process which evolves according to ancestral branching, as well as connections to Poisson point processes, mass fragmentations and weighted trees.
\subsection{Ancestral branching kernels}\label{section:ancestral branching algorithm}
A {\em Markov kernel} on a set $\mathcal{A}$ is a collection $\{p(x,\cdot):x\in\mathcal{A}\}$ of probability distributions on $\mathcal{A}$ indexed by the elements of $\mathcal{A}$.  In particular, for any $A\subset\mathbb{N}$, a Markov kernel on $\mathcal{P}_A$ is a collection $P_A:=\{p(B,\cdot):B\in\mathcal{P}_A\}$ of probability distributions on $\mathcal{P}_A$ indexed by the elements of $\mathcal{P}_A$.

Let $A\subset\mathbb{N}$ be a finite subset such that $\#A\geq2$ and let $\{P_S:S\subseteq A\}$ be a collection of Markov kernels on $\mathcal{P}_S$ for all $S\subseteq A$. Given $T\in\mathcal{T}_A$, a fragmentation of $A$, generate a new fragmentation $T'\in\mathcal{T}_A$ by the following procedure.
\paragraph{Ancestral Branching (AB) Algorithm}

\begin{enumerate}
	\item[(i)] Put $F:=\{A\}$.
	\item[(ii)] Pick any $b$ from $F$ such that $\#b\geq2$.
	\item[(iii)] Generate $\pi_b$ from $p_b(\Pi_{T_{|b}},\cdot)$, the transition measure on $\mathcal{P}_b$ with initial state given by the root partition of the reduced subtree $T_{|b}$, independently of everything generated previously.  
	\item[(iv)] If $\pi_b={\bf1}_b\equiv\{b\}$, discard and repeat step (iii) for $b$; otherwise, put $\Pi_{T'_{|b}}=\pi_b$, i.e.\ define the children of $b$ in $T'$, $\frag(b)$, by the blocks of $\pi_b$.
	\item[(v)] Remove $b$ from $F$ and add the blocks of $\pi_b$ to F, i.e.\ $F\mapsto (F-b)\cup\frag(b)$.
	\item[(vi)] If there is a non-singleton element of $F$, i.e.\ $\#\{b\in F:\#b\geq2\}>0$, go to (ii); otherwise, stop.
\end{enumerate}
If we assume for each $b\subseteq A$ that $p_b(B,{\bf1}_b)<1$ for each $B\in\mathcal{P}_b$, then $\frag(b)$ is almost surely generated in a finite number of steps in (iii) and (iv).  By assuming $A$ is a finite set, we have that the above algorithm runs in a finite number of steps with probability one.  

Henceforth, we shall assume the partition-valued kernels $\{p_b(\cdot,\cdot):b\subset\mathbb{N}\}$ satisfy $p_b(\cdot,{\bf1}_b)<1$ for every $b\subset\mathbb{N}$.  Under this condition, it is straightforward to show that the above algorithm culminates in a transition probability $Q_A(T,\cdot)$ on $\mathcal{T}_A$, which we can express in closed form by
\begin{equation}Q_A(T,T')=\prod_{b\in T':\#b\geq2}\frac{p_b(\Pi_{T_{|b}},\Pi_{T'_{|b}})}{1-p_b(\Pi_{T_{|b}},{\bf1}_{b})},\label{eq:branching Markov kernel}\end{equation}
the product of Markov kernels on the root partitions of the reduced subtrees of all parents of $T'$ conditioned to be non-trivial, i.e.\ not the one block partition ${\bf1}_b$.

To see this, note that for each $b\in T'$ we generate $\Pi_{T'_{|b}}$ independently of all other random partitions generated by this algorithm.  Therefore, we can write $Q_A$ as a product over $\{b\in T':\#b\geq2\}$ of conditional probabilities $\mathbb{P}_b(\Pi_{t'_{|b}}=\pi|T=t)$, i.e.\
 $$Q_A(t,t')=\prod_{b\in t':\#b\geq2}P_b(\Pi_{t'_{|b}}=\pi|T=t).$$
From (iii) and (iv), we have that 
$$P_b(\Pi_{T'_{|b}}=\pi_b|T=t)=\sum_{i=0}^{\infty}p_b(\Pi_{t_{|b}},\pi_b)p_b(\Pi_{t_{|b}},{\bf1}_b)^i=\frac{p_b(\Pi_{t_{|b}},\pi_b)}{1-p_b(\Pi_{t_{|b}},{\bf1}_b)}$$
for each $b\in t'$, which gives us \eqref{eq:branching Markov kernel}.  By a straightforward induction argument, one can easily show that the sum of \eqref{eq:branching Markov kernel} over the elements of $\mathcal{T}_A$ equals one, and so \eqref{eq:branching Markov kernel} defines a Markov kernel on $\mathcal{T}_A$.

We call any Markov kernel on $\mathcal{T}_A$ of the form \eqref{eq:branching Markov kernel} an {\em ancestral branching (AB)} Markov kernel on $\mathcal{T}_A$.  It is clear that transitions $T\mapsto T'$ on $\mathcal{T}_A$ governed by an AB kernel $Q_A(\cdot,\cdot)$ can be generated according to the AB algorithm by taking the transition probabilities in step (iii) to be the $p_b(\cdot,\cdot)$ used in the product of \eqref{eq:branching Markov kernel}.

For $A\subset\mathbb{N}$ with $2\leq\#A<\infty$, the form of \eqref{eq:branching Markov kernel} admits the recursive expression
\begin{equation} Q_A(T,T')=\frac{p_A(\Pi_T,\Pi_{T'})}{1-p_A(\Pi_T,{\bf1}_A)}\prod_{b\in \Pi_{T'}}Q_b(T_{|b},T'_{|b}),\label{eq:recursive branching kernel}\end{equation}
which has an intuitive interpretation in terms of independent self-similar transitions on the space of reduced subtrees of the children of the root of $T'$.  The reader familiar with the literature on fragmentation processes may draw parallels to the usual description of a fragmentation process in terms of successive partitioning of fragments, see e.g.\ \cite{Bertoin2006, McCullaghPitmanWinkel2008}.  Indeed, the specification in \eqref{eq:branching Markov kernel} is related to this specification, but has the added feature of including a Markovian dependence on the previous state in a sequence of fragmentation trees, which has not previously appeared in the study of tree-valued processes.

The Markovian branching algorithm in section \ref{section:ancestral branching algorithm} only requires associated $\mathcal{P}$-valued transition probabilities to be defined on $\mathcal{P}_S\backslash\{{\bf1}_S\}$ for each $S\subseteq A$.  However, in our treatment we always assume that we have a family of transition probabilities which is well-defined on the full space $\mathcal{P}_S$ and satisfies $p_S(\cdot,{\bf1}_S)<1$.  This distinction becomes necessary when we consider infinitely exchangeable processes of AB type later on.

The rest of this paper is organized as follows.  In section \ref{section:infinitely exchangeable processes}, we discuss general conditions under which the AB algorithm gives rise to an infinitely exchangeable tree-valued process.  Section \ref{section:cut-and-paste algorithm} introduces an algorithm on set partitions, the {\em cut-and-paste} (CP) algorithm, and draws parallels to an infinitely exchangeable partition-valued process in \cite{Crane2011a}.  Section \ref{section:cut-and-paste ancestral branching} shows some special properties of the associated tree-valued process based on the combination of both the AB and CP algorithms.
\section{Infinitely exchangeable processes}\label{section:infinitely exchangeable processes}
Infinitely exchangeable random partitions and partition-valued processes have been studied in some detail in the literature.  Ewens \cite{Ewens1972} first introduced his sampling formula as a model in population genetics, which was later studied as a process on set partitions by Kingman \cite{Kingman1978} and several others.  Coalescent processes \cite{EvansPitman1998,Kingman1980,Kingman1982}, fragmentation processes \cite{Bertoin2001a,Bertoin2002,Bertoin2003a,McCullaghPitmanWinkel2008}, fragmentation-coalescence processes \cite{Berestycki2004, DurrettGranovskyGueron1999}, and other general processes \cite{Crane2011a} are partition-valued processes for which conditions for infinite exchangeability have been discovered.  Given the form of the finite-dimensional transition probabilities in \eqref{eq:branching Markov kernel} and its apparent relationship to partition-valued processes, we study conditions under which this tree-valued process is infinitely exchangeable.
\subsection{Exchangeable ancestral branching Markov kernels}
A collection of Markov kernels $Q:=\{Q_A(\cdot,\cdot):A\subseteq\mathbb{N}\}$ on $(\mathcal{T}_A,A\subset\mathbb{N})$ is {\em finitely exchangeable} if for each $n\geq1$, $A,B\subset\mathbb{N}$ with $\#A=\#B=n$, and $t\in\mathcal{T}_A$
\begin{equation}Q_A(t,\cdot)=Q_{B}(\varphi^*(t),\varphi^*(\cdot))\label{eq:exchangeable Markov kernel}\end{equation}
for every one-to-one injection map $\varphi:A\rightarrow B$, where $\varphi^*:\mathcal{T}_A\rightarrow\mathcal{T}_B$ is its associated injection $\mathcal{T}_A\rightarrow\mathcal{T}_B$.  In other words, $Q_B\equiv Q_A\varphi^{*-1}$, the distribution induced on $\mathcal{T}_B$ by $Q_B$ and the injection $\varphi$.  In this case, there exists a map $\sigma_A:A\rightarrow[n]$ such that $Q_A(\cdot,\cdot)=Q_n(\sigma_A(\cdot),\sigma_A(\cdot))=:Q_n\sigma_A(\cdot,\cdot)$, the {\em exchangeable transition probability function} for $n$.  

We define the {\em canonical injection} $A\rightarrow[n]$ as follows.  Suppose, without loss of generality, that $A=\{a_1,\ldots,a_n\}$ with $a_1<a_2<\ldots<a_n$.  Then we define the canonical injection by $\varphi_A:A\rightarrow[n],a_i\mapsto i$.  For each $A\subseteq\mathbb{N}$ such that $\#A=n$, we have $Q_A(\cdot,\cdot)=Q_n\varphi_A(\cdot,\cdot)$.  Therefore, for a finitely exchangeable family of Markovian transition probabilities, we need only specify a transition probability $Q_n(\cdot,\cdot)$ on $\treesn$ for each $n\geq1$.

\begin{thm}\label{thm:exchangeable branching kernel}Let $n\geq1$ and for each $A\subset\mathbb{N}$ with $\#A=n$ let $Q_A(\cdot,\cdot)$ be a branching Markov kernel on $\mathcal{T}_A$ defined by the family $\{P_S^*:S\subseteq A\}$, where $P_S^*:=\{p_S(B,\cdot):B\in\mathcal{P}^*_S:=\mathcal{P}_S\backslash\{{\bf1}_S\}\}$.  Assume further that for every finite $A,B\subset\mathbb{N}$ with $\#A=\#B$ and injection $\psi^*:\mathcal{P}_A\rightarrow\mathcal{P}_B$, $p_{A}(\pi,{\bf1}_A)=p_B(\psi^*(\pi),{\bf1}_B)$.  Then the family $\{Q_A:A\subset\mathbb{N}\}$ is finitely exchangeable if and only if the restricted collection $\{P_S^*:S\subset \mathbb{N}\}$ is finitely exchangeable.\end{thm}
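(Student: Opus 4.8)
The plan is to prove both directions by directly exploiting the product formula~\eqref{eq:branching Markov kernel}, which expresses $Q_A(t,t')$ as a product over non-singleton blocks $b\in t'$ of the ratios $p_b(\Pi_{t_{|b}},\Pi_{t'_{|b}})/(1-p_b(\Pi_{t_{|b}},{\bf1}_b))$. The key structural observation is that the injection $\varphi\colon A\to B$ acts on a tree $t\in\mathcal{T}_A$ by relabeling each block, hence it induces for each non-singleton $b\in t'$ an injection $\varphi|_b\colon b\to\varphi(b)$ whose associated partition-injection sends $\Pi_{t_{|b}}\mapsto\Pi_{(\varphi^*(t))_{|\varphi(b)}}$ and ${\bf1}_b\mapsto{\bf1}_{\varphi(b)}$. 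Thus relabeling $t,t'$ via $\varphi^*$ simply reindexes the product~\eqref{eq:branching Markov kernel} over $\{b\in t':\#b\ge2\}$ by the corresponding product over $\{c\in\varphi^*(t'):\#c\ge2\}$, transforming each factor $p_b(\Pi_{t_{|b}},\Pi_{t'_{|b}})$ into $p_{\varphi(b)}(\psi^*(\Pi_{t_{|b}}),\psi^*(\Pi_{t'_{|b}}))$ for the appropriate partition-injection $\psi^*$.

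For the ``if'' direction, assume $\{P_S^*:S\subset\mathbb{N}\}$ is finitely exchangeable, so $p_b(\pi,\pi')=p_{\varphi(b)}(\psi^*(\pi),\psi^*(\pi'))$ on $\mathcal{P}_b^*$, together with the hypothesis $p_A(\pi,{\bf1}_A)=p_B(\psi^*(\pi),{\bf1}_B)$ which handles the denominators (note the denominator involves the value at ${\bf1}_b\notin\mathcal{P}_b^*$, which is exactly why this extra assumption is needed). Applying these two identities factor-by-factor in~\eqref{eq:branching Markov kernel} and reindexing the product as above yields $Q_A(t,t')=Q_B(\varphi^*(t),\varphi^*(t'))$, which is~\eqref{eq:exchangeable Markov kernel}. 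For the ``only if'' direction, assume $\{Q_A\}$ is finitely exchangeable and fix a non-singleton $b\subseteq A$ and partitions $\pi,\pi'\in\mathcal{P}_b^*$. The idea is to choose trees $t,t'\in\mathcal{T}_A$ that isolate a single factor: take $t'$ to be the tree whose root partition is ${\bf1}_A$-refinement realizing $b$ as a block at some level with $\Pi_{t'_{|b}}=\pi'$, while all other reduced subtrees are fixed (e.g.\ totally by the same canonical splitting) and likewise $\Pi_{t_{|b}}=\pi$; then the ratio $Q_A(t,t')/Q_A(\tilde t,t')$ for a suitable comparison pair cancels every factor except the one indexed by $b$, giving $p_b(\pi,\pi')/(1-p_b(\pi,{\bf1}_b))$ up to a known constant, and invariance of this quantity under $\varphi$ forces $p_b(\pi,\pi')=p_{\varphi(b)}(\psi^*(\pi),\psi^*(\pi'))$ once the denominators are reconciled using the standing hypothesis.

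The main obstacle is the ``only if'' direction: one must verify that for arbitrary non-singleton $b\subseteq A$ and arbitrary $\pi,\pi'\in\mathcal{P}_b^*$ there genuinely exist trees $t,t'\in\mathcal{T}_A$ in which $b$ appears as a node, $\Pi_{t_{|b}}=\pi$ and $\Pi_{t'_{|b}}=\pi'$, and whose remaining structure can be held fixed so that all other factors in~\eqref{eq:branching Markov kernel} cancel in a ratio (and are nonzero, so division is legitimate). This is a combinatorial construction: one builds a ``comb'' or nested sequence of splits along $A$ that first carves out $b$ exactly, then fragments $b$ according to $\pi$ (resp.\ $\pi'$) and fragments $A\setminus b$ and the interior nodes in a fixed reference manner; the existence of such trees uses only that $2\le\#b\le\#A<\infty$. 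One must also be slightly careful that the factor-isolation argument actually recovers $p_b$ rather than merely the ratio $p_b/(1-p_b(\cdot,{\bf1}_b))$ — but since the denominator invariance is assumed outright in the theorem statement, isolating the ratio suffices to conclude invariance of the numerator. I expect the rest (the ``if'' direction and all bookkeeping about injections commuting with restriction and root-partition extraction) to be routine given the earlier development.
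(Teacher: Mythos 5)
Your ``if'' direction is fine and is essentially the paper's (the paper dismisses it as obvious): relabelling by $\varphi$ reindexes the product \eqref{eq:branching Markov kernel} block by block, exchangeability of $\{P_S^*\}$ handles each numerator, and the standing hypothesis $p_A(\pi,{\bf1}_A)=p_B(\psi^*(\pi),{\bf1}_B)$ handles each denominator. The ``only if'' direction, however, has a genuine gap in the factor-isolation mechanism. Write $f_b(\pi,\pi'):=p_b(\pi,\pi')/(1-p_b(\pi,{\bf1}_b))$. First, the surgery you need does not exist in general: if $\tilde t$ differs from $t$ by replacing $\Pi_{t_{|b}}=\pi$ with $\tilde\pi$, then for every non-singleton node $c\in t'$ with $c\subsetneq b$ (and such nodes are unavoidable whenever $\pi'$ has a non-singleton block) the reduced subtree $t_{|c}$, and hence the factor $f_c(\Pi_{t_{|c}},\Pi_{t'_{|c}})$, changes as well, so the ratio $Q_A(t,t')/Q_A(\tilde t,t')$ does not cancel every factor except the one indexed by $b$. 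Second, and more fundamentally, even where the cancellation does go through, what you isolate is the quotient $f_b(\pi,\pi')/f_b(\tilde\pi,\pi')$ of two unknown quantities, not ``$f_b(\pi,\pi')$ up to a known constant'': invariance of such quotients under $\varphi$ does not pin down $f_b$ itself without an anchor value, and no anchor is supplied.

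The repair — and this is what the paper's proof actually exploits — is that you never need to isolate an interior factor: every finite $b\subset\mathbb{N}$ is the \emph{root} of its own kernel $Q_b$, which belongs to the family $\{Q_A\}$ and is therefore itself assumed exchangeable. By the recursive form \eqref{eq:recursive branching kernel}, $Q_A(t,t')$ is the root factor $f_A(\Pi_t,\Pi_{t'})$ times $\prod_{c\in\Pi_{t'}}Q_c(t_{|c},t'_{|c})$, and this sub-product is matched term by term with its $\varphi$-image by exchangeability of the kernels on the smaller sets $c$ (the paper organizes this as an induction on $\#A$). Dividing it out — after choosing the subtrees of $t'$ below the root so that the sub-product is positive, which is possible because each $Q_c(t_{|c},\cdot)$ is a probability on a finite set — yields $f_A(\pi,\pi')=f_B(\psi^*(\pi),\psi^*(\pi'))$ for all $\pi'\neq{\bf1}_A$, and the standing hypothesis on $p(\cdot,{\bf1})$ then upgrades equality of these conditioned ratios to equality of the $p$'s. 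Applying this with $A$ replaced by an arbitrary finite $b$ gives exchangeability of $\{P_S^*\}$ in full. If you prefer to avoid the induction, you can instead sum the identity $Q_A(t,t')=Q_B(\varphi^*(t),\varphi^*(t'))$ over all $t'$ with a fixed root partition $\pi'$; the sub-products marginalize to $1$ and the root factor is isolated directly.
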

\begin{proof}
Let $A\subset\mathbb{N}$ be a finite subset and $P:=\{P_S:S\subseteq A\}$ be some family of Markov kernels on $\{\mathcal{P}_S:S\subseteq A\}$.  From \eqref{eq:branching Markov kernel}, the AB Markov kernel on $\mathcal{T}_A$ based on $P$ is
$$Q_A(T,T')=\prod_{b\in T':\#b\geq2}\frac{p_b(\Pi_{T_{|b}},\Pi_{T'_{|b}})}{1-p_b(\Pi_{T_{|b}},{\bf1}_b)}.$$

For $A,B\subset\mathbb{N}$ with $\#A=\#B$ and injection map $\varphi:A\rightarrow B$ with associated injection $\varphi^{*}:\mathcal{T}_A\rightarrow\mathcal{T}_B$, we also write $\varphi^*$ to denote the associated injection $\mathcal{P}_A\rightarrow\mathcal{P}_B$, which should cause no confusion since it is clear from context to which we are referring.

For $n=2$ and $A,B\subset\mathbb{N}$ such that $\#A=\#B=2$ with injection map $\varphi:A\rightarrow B$ and associated injection $\psi^*:\mathcal{P}_A\rightarrow\mathcal{P}_B$.  In this case, $\#\mathcal{P}_A=\#\mathcal{P}_B=2$ so that we can write $A_1,A_2$ as the elements of $\mathcal{P}_A$ with $\#A_1=1$ and $\#A_2=2$.  Likewise, we write $B_1$ and $B_2$ for one and two block (respectively) elements of $\mathcal{P}_B$.  Hence, $\psi^*(A_i)=B_i$ for $i=1,2$.  It is assumed that $p_B(\psi^*(\pi),{\bf1}_B)=p_A(\pi,{\bf1}_A)$ for each $\pi\in\mathcal{P}_A$.  Hence, $p_B(\psi^*(\pi),{\bf1}_B)=p_B(\psi^*(\pi),B_1)=p_A(\pi,A_1)$ and $1-p_B(\psi^*(\pi),B_1)=p_B(\psi^*(\pi),B_2)=p_A(\pi,A_2)=1-p_A(\pi,A_1)$ and $p_B=p_A\psi^{*-1}$ for $\#A=\#B=2$.  So $\{p_A(\cdot,\cdot):\#A=2\}$ is exchangeable.  Also, $\#\mathcal{T}_A=\#\mathcal{T}_B=1$ implies for $t\in\mathcal{T}_A$, $Q_A(t,t)=Q_B(\psi^*(t),\psi^*(t))=1$ trivially.  So we have that $\{Q_A(\cdot,\cdot):\#A=2\}$ is exchangeable.

Now, fix $n>2$ and suppose that for any pair $A,B\subset\mathbb{N}$ with $\#A=\#B\leq n$ and any injective map $\varphi:A\rightarrow B$ we have that $Q_B=Q_A\varphi^{*-1}$ implies that $p_B=p_A\varphi^{*-1}$ on $\mathcal{P}_B^*.$  Now, consider $A^*,B^*\subset\mathbb{N}$ with $\#A^*=\#B^*=n+1$ and let $\psi:A^*\rightarrow B^*$ be the unique injective map $A^*\rightarrow B^*$ whose restriction to $A\rightarrow B$ corresponds to $\varphi$. Write $\psi^*:\mathcal{T}_{A^*}\rightarrow\mathcal{T}_{B^*}$ for its associated injection $\mathcal{T}_{A^*}\rightarrow\mathcal{T}_{B^*}$.  

Assume that $Q_{A^*}=Q_{B^*}\psi^{*}$ and let $t,t'\in\mathcal{T}_{A^*}$.  We have the following.
\begin{eqnarray}
Q_{A^*}(t,t')&=&\frac{p_{A^*}(\Pi_t,\Pi_{t'})}{1-p_{A^*}(\Pi_t,{\bf1}_{A^*})}\prod_{b\in\Pi_{t'}:\#b\geq2}Q_b(t_{|b},t'_{|b})\\
&=&\frac{p_{B^*}(\Pi_{\psi^*(t)},\Pi_{\psi^*(t')})}{1-p_{B^*}(\Pi_{\psi^*(t)},{\bf1}_{B^*})}\prod_{b\in\Pi_{\psi^*(t')}:\#b\geq2}Q_{b}(\psi^*(t)_{|b},\psi^*(t')_{|b})\\
&=&\frac{p_{B^*}(\Pi_{\psi^*(t)},\Pi_{\psi^*(t')})}{1-p_{B^*}(\Pi_{\psi^*(t)},{\bf1}_{B^*})}\prod_{b\in\Pi_{\psi^*(t')}:\#b\geq2}Q_{b}(\psi^*(t_{|\psi^{-1}(b)}),\psi^*(t'_{|\psi^{-1}(b)}))\\
&=&\frac{p_{B^*}(\Pi_{\psi^*(t)},\Pi_{\psi^*(t')})}{1-p_{B^*}(\Pi_{\psi^*(t)},{\bf1}_{B^*})}\prod_{b\in\Pi_t':\#b\geq2}Q_b(t_{|b},t'_{|b})
\end{eqnarray}
which implies that 
$$\frac{p_{A^*}(\pi,\pi')}{1-p_{A^*}(\pi,{\bf1}_{A^*})}=\frac{p_{B^*}(\psi^*(\pi),\psi^*(\pi'))}{1-p_{B^*}(\psi^*(\pi),{\bf1}_{B^*})}$$ for all one-to-one functions $\psi^*:\mathcal{P}_{A^*}\rightarrow \mathcal{P}_{B^*}$ and all $\pi,\pi'\in\mathcal{P}_{A^*}\backslash\{{\bf1}_{A^*}\}=:\mathcal{P}_{A^*}^*$, which establishes that
\begin{eqnarray*}
p_{A^*}(\pi,\pi')&=&p_{B^*}(\psi^*(\pi),\psi^*(\pi'))
\end{eqnarray*}
by assumption that $p_{B^*}(\psi^*(\cdot),{\bf1}_{B^*})=p_{A^*}(\cdot,{\bf1}_{A^*})$ for all $A^*,B^*$ such that $\#A^*=\#B^*$ and any injective mapping $\psi:A^*\rightarrow B^*$.

This establishes finite exchangeability for $\{p_A(\cdot,\cdot):\#A\leq n+1\}$.  Induction implies this holds for all $n\geq1$ and hence also implies finite exchangeability of $\{p_A(\cdot,\cdot):A\subset\mathbb{N},\#A<\infty\}$.

The reverse implication is obvious.  In fact, if $\{P_S^*:S\subset\mathbb{N}\}$ is finitely exchangeable, then $p_A(\pi,{\bf1}_A)=p_B(\psi^*(\pi),{\bf1}_B)$ for any $A,B$ with $\#A=\#B$ and any injection $\psi^*:\mathcal{P}_{A}\rightarrow\mathcal{P}_B$.  So the additional assumption in the statement of the theorem is implicit.
\end{proof}
Theorem \ref{thm:exchangeable branching kernel} establishes a correspondence between collections of exchangeable Markov kernels on $\mathcal{P}_{[n]}$ such that $p_n(B,{\bf1}_n)<1$ for each $n\geq1$ and exchangeable ancestral branching Markov kernels on $\mathcal{T}_n$.  For all practical purposes, it is sufficient to have an exchangeable Markov process on $\partitionsn$.  There are several known results for exchangeable processes on the projective system $(\partitionsn,n\geq1)$, e.g.\ exchangeable coagulation-fragmentation (EFC) process \cite{Berestycki2004}, the $\cp(\nu)$-Markov process \cite{Crane2011a} which we shall call the cut-and-paste process in light of the exposition in section \ref{section:cut-and-paste algorithm}, and any properties of the induced $\mathcal{T}$-valued process associated with either of these are of interest.  As we see in section \ref{section:cut-and-paste ancestral branching}, the the cut-and-paste ancestral branching process lends itself to certain extensions.
\subsection{Consistent ancestral branching kernels}\label{section:consistent branching kernels}
Let $A\subseteq\mathbb{N}$.  A family of Markov kernels $\{Q_S:S\subseteq A\}$ defined on the projective system $\{\mathcal{T}_S:S\subseteq A\}$ is {\em consistent} if for all $\emptyset\neq C\subset B\subseteq A$, $t\in\mathcal{T}_C$ and $t^*\in D^{-1}_{C,B}(t)$,
\begin{equation}Q_BD^{-1}_{C,B}(t,\cdot):=Q_B(t^*,D^{-1}_{C,B}(\cdot))=Q_C(t,\cdot).\label{eq:consistent Markov kernel}\end{equation}

In other words, for any $C\subset B$ and injection $\varphi:C\rightarrow B$ with associated projection $\varphi^*:\mathcal{T}_B\rightarrow\mathcal{T}_C$, we have $Q_C\equiv Q_B\varphi^{*-1}$.
\begin{thm}\label{thm:consistent branching kernel}Let $Q:=\{Q_S:S\subseteq A\}$ be a family of ancestral branching Markov kernels based on a collection $P:=\{P_S:S\subseteq A\}$.  The family $Q$ is consistent if each $p_S(\pi,\cdot)$ is consistent for all $\pi$ such that $\pi\neq{\bf1}_S$.

Moreover, if, in addition, $p_{S^*}(\pi^*,{\bf e}_{S^*})+p_{S^{*}}(\pi^*,{\bf1}_{S^*})=p_S(\pi,{\bf1}_S)$ for every $S\subset S^*$ with $\#S=\#S^*-1$ and every $\pi\in\mathcal{P}_S$ and $\pi^*\in D_{S,S^*}^{-1}(\pi)$, then $Q$ consistent implies $p_S(\cdot,\cdot)$ is consistent for all $S\subseteq A$.
\end{thm}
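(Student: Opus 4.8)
The plan is to prove both parts by induction on $n:=\#B$, having first reduced to one-step deletions. Since every $D_{C,B}$ with $\emptyset\neq C\subsetneq B$ factors through single-element deletions and relabellings, and $Q_BD_{C,B}^{-1}=Q_C$ follows from the analogous identities on the intermediate spaces, it suffices to treat $B=C\cup\{j\}$ with $j\notin C$. Write $\mathbf e_B:=\{C,\{j\}\}\in\mathcal P_B$; then $D_{C,B}^{-1}(\mathbf 1_C)=\{\mathbf 1_B,\mathbf e_B\}$, while every member of $D_{C,B}^{-1}(\rho)$ for $\rho\in\mathcal P_C\setminus\{\mathbf 1_C\}$ is a genuine (non-trivial, $\neq\mathbf e_B$) root partition. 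I will also use two elementary facts about restriction of reduced subtrees: $T_{|b}=(T_{|C})_{|b}$ for every $b\subseteq C$, and $\Pi_{T_{|C}}=(\Pi_T)_{|C}$ except when $\Pi_T=\mathbf e_B$, in which case $T_{|C}$ is exactly the subtree of $T$ rooted at $C$ and $\Pi_{T_{|C}}=\frag_T(C)$.

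For the first assertion, fix $t\in\mathcal T_C$ and $t^*\in D_{C,B}^{-1}(t)$ and follow the AB algorithm run from $t^*$. Its first step produces a new root partition $\widetilde X$ with law $p_B(\Pi_{t^*},\cdot)$ conditioned on $\{\,\cdot\neq\mathbf 1_B\}$; set $q:=p_B(\Pi_{t^*},\mathbf 1_B)$, $r:=p_B(\Pi_{t^*},\mathbf e_B)$. I split on whether $\widetilde X=\mathbf e_B$. On $\{\widetilde X\neq\mathbf e_B\}$ one has $\Pi_{T'_{|C}}=(\widetilde X)_{|C}\neq\mathbf 1_C$, and each block $c$ of $(\widetilde X)_{|C}$ with $\#c\geq2$ is $b$ or $b\setminus\{j\}$ for a unique block $b$ of $\widetilde X$; in the first case $T'_{|c}$ is the subtree the algorithm grows below $b$, and in the second it is the restriction to $c$ of that subtree, so in either case — directly when $j\notin b$, and via the inductive consistency of $Q_b$ (valid as $\#b<\#B$) and $t^*_{|c}=t_{|c}$ when $j\in b$ — the conditional law of $T'_{|c}$ given $\widetilde X$ is $Q_c(t_{|c},\cdot)$, independently across $c$. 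On $\{\widetilde X=\mathbf e_B\}$, $T'$ has root partition $\{C,\{j\}\}$ with $\{j\}$ a leaf, and the algorithm processes $C$ using $p_C(\Pi_{t^*_{|C}},\cdot)=p_C(\Pi_t,\cdot)$ and recurses with $\Pi_{t^*_{|b}}=\Pi_{t_{|b}}$ at every $b\subseteq C$; hence the subtree below $C$, which is all of $T'_{|C}$, has law exactly $Q_C(t,\cdot)$.

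Using $\Pi_s\neq\mathbf 1_C$ to get $\mathbb P((\widetilde X)_{|C}=\Pi_s)=p_B(\Pi_{t^*},D_{C,B}^{-1}(\Pi_s))/(1-q)$, the two branches combine to
\[
Q_B\bigl(t^*,D_{C,B}^{-1}(s)\bigr)=\frac{r}{1-q}\,Q_C(t,s)+\frac{p_B\bigl(\Pi_{t^*},D_{C,B}^{-1}(\Pi_s)\bigr)}{1-q}\prod_{c\in\Pi_s:\,\#c\geq2}Q_c(t_{|c},s_{|c}),
\]
and, by the recursive form \eqref{eq:recursive branching kernel}, $Q_C(t,s)=(1-p_C(\Pi_t,\mathbf 1_C))^{-1}p_C(\Pi_t,\Pi_s)\prod_cQ_c(t_{|c},s_{|c})$. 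Thus $Q_B(t^*,D_{C,B}^{-1}(s))=Q_C(t,s)$ is equivalent to the partition-level identity
\[
p_B\bigl(\Pi_{t^*},D_{C,B}^{-1}(\Pi_s)\bigr)=\frac{(1-q-r)\,p_C(\Pi_t,\Pi_s)}{1-p_C(\Pi_t,\mathbf 1_C)}.
\]
If $\Pi_{t^*}\neq\mathbf e_B$ then $\Pi_t=(\Pi_{t^*})_{|C}\neq\mathbf 1_C$, and consistency of $p_B(\Pi_{t^*},\cdot)$ gives both $p_B(\Pi_{t^*},D_{C,B}^{-1}(\Pi_s))=p_C(\Pi_t,\Pi_s)$ and, applied to the atom $\mathbf 1_C$, $q+r=p_C(\Pi_t,\mathbf 1_C)$, so the identity holds. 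The remaining case $\Pi_{t^*}=\mathbf e_B$ (i.e. $t^*$ the pendant extension $\{B\}\cup t\cup\{\{j\}\}$, where $(\Pi_{t^*})_{|C}=\mathbf 1_C\neq\Pi_t$) is the crux I expect to require the most care: here consistency of $p$ at $\mathbf e_B$ yields $p_B(\mathbf e_B,D_{C,B}^{-1}(\cdot))=p_C(\mathbf 1_C,\cdot)$ and $1-q-r=1-p_C(\mathbf 1_C,\mathbf 1_C)$, reducing the identity to $p_C(\mathbf 1_C,\cdot)/(1-p_C(\mathbf 1_C,\mathbf 1_C))=p_C(\Pi_t,\cdot)/(1-p_C(\Pi_t,\mathbf 1_C))$ on $\mathcal P_C\setminus\{\mathbf 1_C\}$ — precisely the normalisation isolated in the hypothesis of the ``moreover'' clause (taken at $\pi=\mathbf 1_S$), which I would verify for the partition kernels in play. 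Feeding the identity back completes the induction.

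For the ``moreover'' assertion I reverse the computation. Assuming $Q$ consistent, the equivalence just derived forces the boxed partition identity for every root partition $\Pi_{t^*}$ and every $\Pi_s\neq\mathbf 1_C$. For $\Pi_{t^*}\neq\mathbf e_B$, the added hypothesis $p_B(\Pi_{t^*},\mathbf e_B)+p_B(\Pi_{t^*},\mathbf 1_B)=p_C((\Pi_{t^*})_{|C},\mathbf 1_C)$ turns $1-q-r$ into $1-p_C(\Pi_t,\mathbf 1_C)$, so the identity collapses to $p_B(\Pi_{t^*},D_{C,B}^{-1}(\Pi_s))=p_C((\Pi_{t^*})_{|C},\Pi_s)$; adjoining the atom $\mathbf 1_C$ (again supplied by the added hypothesis, now for any $\pi^*\in D_{C,B}^{-1}(\mathbf 1_C)$) gives $p_B(\Pi_{t^*},D_{C,B}^{-1}(\cdot))=p_C((\Pi_{t^*})_{|C},\cdot)$, i.e. consistency of $p$ at $\Pi_{t^*}$. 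Since every non-trivial partition of $B$ occurs as some $\Pi_{t^*}$, and since the $\Pi_{t^*}=\mathbf e_B$ identity combined with the same normalisation pins down $p_B(\mathbf e_B,D_{C,B}^{-1}(\cdot))=p_C(\mathbf 1_C,\cdot)$, the induction delivers consistency of $p_S$ on $\mathcal P_S\setminus\{\mathbf 1_S\}$ for every $S\subseteq A$, which together with the stated normalisation is the asserted consistency of $p_S(\cdot,\cdot)$.
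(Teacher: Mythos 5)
Your strategy is the same as the paper's: reduce to a one-element deletion, split the preimage sum according to whether the new leaf is attached as a pendant child of the root (the $\mathbf{e}_B$ term) or inserted inside a block of the new root partition, collapse the subtree sums by the inductive consistency of the $Q_b$, and reduce everything to a partition-level identity. Your derivation of the displayed identity
$p_B(\Pi_{t^*},D_{C,B}^{-1}(\Pi_s))=(1-q-r)\,p_C(\Pi_t,\Pi_s)/(1-p_C(\Pi_t,\mathbf{1}_C))$
is exactly the content of the paper's chain of equalities ending in its bracketed expression, just phrased probabilistically through the algorithm rather than by direct summation, and your treatment of the case $\Pi_{t^*}\neq\mathbf{e}_B$ is correct and complete.

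The gap is in the case $\Pi_{t^*}=\mathbf{e}_B$, which you rightly isolate as the crux but then dispose of incorrectly. The identity you need there,
\[
\frac{p_C(\mathbf{1}_C,\rho)}{1-p_C(\mathbf{1}_C,\mathbf{1}_C)}=\frac{p_C(\Pi_t,\rho)}{1-p_C(\Pi_t,\mathbf{1}_C)}\qquad\text{for all }\rho\neq\mathbf{1}_C\text{ and all non-trivial }\Pi_t,
\]
is \emph{not} the ``moreover'' normalisation taken at $\pi=\mathbf{1}_S$. That hypothesis, at $\pi=\mathbf{1}_C$, reads $p_B(\pi^*,\mathbf{e}_B)+p_B(\pi^*,\mathbf{1}_B)=p_C(\mathbf{1}_C,\mathbf{1}_C)$ for $\pi^*\in\{\mathbf{1}_B,\mathbf{e}_B\}$, which is already a consequence of consistency of $p_B(\mathbf{e}_B,\cdot)$ applied to $D_{C,B}^{-1}(\mathbf{1}_C)=\{\mathbf{1}_B,\mathbf{e}_B\}$ and carries no information comparing the kernel started at $\mathbf{1}_C$ with the kernel started at $\Pi_t\neq\mathbf{1}_C$. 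What you actually need is that the conditioned-to-be-non-trivial kernel $p_C(\cdot,\cdot)/(1-p_C(\cdot,\mathbf{1}_C))$ is the \emph{same from every initial state}; this is a genuinely additional, and very strong, assumption that is not among the theorem's hypotheses (and fails, for instance, for the $\cp(\nu)$ kernels of \eqref{eq:fidi tps}, for which $p_n(\mathbf{1}_n,\cdot)=\varrho_\nu(\cdot)$ while $p_n(B,\cdot)$ for $\#B>1$ is a different measure). So your proof of the first assertion is complete only for those $t^*\in D_{C,B}^{-1}(t)$ whose root partition restricts to $\Pi_t$, and the same unresolved case propagates into your ``moreover'' direction, where consistency of $p_B(\mathbf{e}_B,\cdot)$ is again pinned to this extra identity rather than to the stated hypotheses. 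You should either add the missing hypothesis explicitly, or restrict the consistency claim to preimages $t^*$ with $\Pi_{t^*}\in D_{C,B}^{-1}(\Pi_t)$. (For what it is worth, the paper's own proof reduces to the identical bracket and then ``reverses the argument'' under the tacit assumption $(\Pi_{t^*})_{|S}=\Pi_t$, which fails exactly when $\Pi_{t^*}=\mathbf{e}_{S^x}$; you have located a real soft spot, but your patch does not close it.)
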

\begin{proof}
For $S\subseteq A$ and $x\in S\cap A^{\mbox{c}}$, write $S^x:=S\cup\{x\}$.

Suppose $Q$ is consistent and $p_{S^x}(\pi^*,{\bf e}_{S^x})+p_{S^{x}}(\pi^*,{\bf1}_{S^x})=p_S(\pi,{\bf1}_S)$ for every $\pi\in\mathcal{P}_S$ and $\pi^*\in D^{-1}_{S,S^x}(\pi)$.  Then we show that $P$ is consistent by induction.  For $S\subset A$ such that $\#S=2$, we have that $\mathcal{T}_S$ contains exactly one element, which we denote $t_S$.  It is clear that $Q_S(t_S,t_S)=1$ and for any $S^x\subseteq A$ we have that 
$$\sum_{t''\in D^{-1}_{S,S^x}(t_S)}Q_{S^x}(t^*,t'')=\sum_{t''\in\mathcal{T}_{S^x}}Q_{S^x}(t^*,t'')=\sum_{\pi\in\mathcal{P}_{S^x}\backslash\{{\bf1}_{S^x}\}}\frac{p_{S^x}(\Pi_{t^*},\pi)}{q_{S^x}(\Pi_{t^*},{\bf1}_{S^x})}=1$$
for any $t^*\in\mathcal{T}_{S^x}$ by the fact that $Q_{S^x}$ is a transition probability.  By our assumption, we have
\begin{eqnarray*}
1-p_{S^x}(\Pi_{t^*},{\bf1}_{S^x})&=&\sum_{\pi\in D^{-1}_{S,S^x}(\Pi_{t'})}p_{S^x}(\Pi_{t^*},\pi)+[p_S(\Pi_t,{\bf1}_S)-p_{S^x}(\Pi_{t^*},{\bf1}_{S^x})]\\
p_S(\Pi_t,\Pi_{t'})+p_S(\Pi_t,{\bf1}_S)&=&\sum_{\pi\in D^{-1}_{S,S^x}(\Pi_{t'})}p_{S^x}(\Pi_{t^*},\pi)+p_S(\Pi_t,{\bf1}_S)\\
p_S(\Pi_t,\Pi_{t'})&=&\sum_{\pi\in D^{-1}_{S,S^x}(\Pi_{t'})}p_{S^x}(\Pi_{t^*},\pi),
\end{eqnarray*}
and $p_{S}$ is consistent with $p_{S^x}$ for $\#S=2$ and $S^x$.

Now, for each $S\subset A$ with $\#S=m<\#A$, assume that $p_T(\cdot,\cdot)$ is consistent for all $T\subseteq S$, and let $S^x=S\cup\{x\}$ for some $x\in A\cap S^{\mbox{c}}$.  Assume $t,t'\in\mathcal{T}_S$ and let $t^*\in D^{-1}_{S,S^*}(t)$.  For a partition $\pi\in\mathcal{P}_S$ and $b\in\pi$, write $b^x\in\pi^*\in D^{-1}_{S,S^x}(\pi)$ to denote the block of $\pi$ to which $x$ is added to obtain $\pi^*$.  We have
\begin{eqnarray}
\lefteqn{\sum_{t''\in D^{-1}_{S,S^*}(t')}Q_{S^*}(t^*,t'')=}\\
&=&\sum_{\pi^*\in D^{-1}_{S,S^x}(\Pi_{t'})}\sum_{t''\in D^{-1}_{b,b^x}(t'_{|b})}\frac{p_{S^x}(\Pi_{t^*},\pi^*)}{q_{S^x}(\Pi_{t^*},{\bf1}_{S^x})}\prod_{b\in\pi^*:b\neq b^x}\left[Q_b(t^*_{|b},t'_{|b})\right]Q_{b^x}(t^*_{|b^x},t''_{|b^x})+\frac{p_{S^x}(\Pi_{t^*},{\bf e}_{S^x})}{q_{S^x}(\Pi_{t^*},{\bf1}_{S^x})}Q_S(t,t')\label{consistency line 1}\\
&=&\sum_{\pi^*\in D^{-1}_{S,S^x}(\Pi_{t'})}\frac{p_{S^x}(\Pi_{t^*},\pi^*)}{q_{S^x}(\Pi_{t^*},{\bf1}_{S^x})}\prod_{b\in\pi^*:b\neq b^x}Q_b(t^*_{|b},t''_{|b})\sum_{t''\in D^{-1}_{b,b^x}(t'_{|b})}Q_{b^x}(t^*_{|b^x},t''_{|b^x})+\frac{p_{S^x}(\Pi_{t^*},{\bf e}_{S^x})}{q_{S^x}(\Pi_{t^*},{\bf1}_{S^x})}Q_S(t,t')\label{consistency line 2}\\
&=&\sum_{\pi^*\in D^{-1}_{S,S^x}(\Pi_{t'})}\frac{p_{S^x}(\Pi_{t^*},\pi^*)}{q_{S^x}(\Pi_{t^*},{\bf1}_{S^x})}\prod_{b\in \pi}Q_b(t_{|b},t'_{|b})+\frac{p_{S^x}(\Pi_{t^*},{\bf e}_{S^x})}{q_{S^x}(\Pi_{t^*},{\bf1}_{S^x})}Q_S(t,t')\label{consistency line 3}\\
&=&Q_{S}(t,t')\left[\frac{p_{S^x}(\Pi_{t^*},{\bf e}_{S^x})}{q_{S^x}(\Pi_{t^*},{\bf1}_{S^{x}})}+\sum_{\pi^*\in D^{-1}_{S,S^*}(\Pi_{t'})}\frac{p_{S^x}(\Pi_{t^*},\pi^*)}{q_{S^x}(\Pi_{t^*},{\bf1}_{S^x})}\frac{q_{S}(\Pi_t,{\bf1}_{S})}{p_S(\Pi_t,\Pi_{t'})}\right].\label{consistency line 4}
\end{eqnarray}
Here, \eqref{consistency line 1} follows by noticing that the restriction $t^*_{|b}$ and $t'_{|b}$ is unaffected unless $b=b^x$ and that $t''\in D^{-1}_{S,S^*}(t')$ can be broken down into a sum over $\pi^*\in D^{-1}_{S,S^x}(\Pi_{t'})$ and a sum over trees in the inverse image of the reduced subtree $t'_{|b^x}$.  Line \eqref{consistency line 2} follows by bringing factors that do not depend on $b^x$ outside of the sum.  Line \eqref{consistency line 3} follows by the induction hypothesis that $Q_b$ is consistent for all $b\subseteq S$.  And line \eqref{consistency line 4} follows by the recursive expression of \eqref{eq:branching Markov kernel}.

Consistency requires that $\sum_{t''\in D^{-1}_{S,S^x}(t')}Q_{S^x}(t^*,t'')=Q_S(t,t')$ for all $t^*\in D^{-1}_{S,S^x}(t)$ and hence we must have 
$$\frac{p_{S^x}(\Pi_{t^*},{\bf e}_{S^x})}{q_{S^x}(\Pi_{t^*},{\bf1}_{S^{x}})}+\sum_{\pi^*\in D^{-1}_{S,S^x}(\Pi_{t'})}\frac{p_{S^x}(\Pi_{t^*},\pi^*)}{q_{S^x}(\Pi_{t^*},{\bf1}_{S^x})}\frac{q_{S}(\Pi_t,{\bf1}_{S})}{p_S(\Pi_t,\Pi_{t'})}=1$$
above, which is equivalent to 
$$p_{S^x}(\Pi_{t^*},{\bf e}_{S^x})+\frac{q_{S}(\Pi_t,{\bf1}_S)}{p_S(\Pi_t,\Pi_{t'})}\sum_{\pi^*\in D^{-1}_{S,S^x}(\Pi_{t'})}p_{S^x}(\Pi_{t^*},\pi^*)=q_{S^x}(\Pi_{t^*},{\bf1}_{S^x}).$$
Suppose that $\sum_{\pi^*\in D^{-1}_{S,S^x}(\Pi_{t'})}p_{S^x}(\Pi_{t^*},\pi^*)\neq p_S(\Pi_t,\Pi_{t'})$, then 
$$\frac{q_S(\Pi_t,{\bf1}_S)}{p_S(\Pi_t,\Pi_{t'})}\sum_{\pi^*\in D^{-1}_{S,S^x}}p_{S^x}(\Pi_{t^*},\pi^*)\neq q_S(\Pi_t,{\bf1}_S)$$
and 
$$\frac{p_{S^x}(\Pi_{t^*},{\bf e}_{n+1})}{q_{S^x}(\Pi_{t^*},{\bf1}_{S^{*}})}+\sum_{\pi^*\in D^{-1}_{S,S^x}(\Pi_{t'})}\frac{p_{S^x}(\Pi_{t^*},\pi^*)}{q_{S^x}(\Pi_{t^*},{\bf1}_{S^x})}\frac{q_{S}(\Pi_t,{\bf1}_{S})}{p_S(\Pi_t,\Pi_{t'})}\neq1$$
by the assumption that $p_T(\cdot,\cdot)$ is consistent for all $T\subseteq S$ and our additional assumption. 
Hence, we conclude that consistency of $Q_S$ and $Q_{S^x}$, along with our additional assumption, implies that $p_S(\pi,\cdot)$ and $p_{S^x}(\pi^*,\cdot)$ are consistent for all $\pi\in\mathcal{P}_S$ with $\#\pi>1$ and $\pi^*\in D^{-1}_{S,S^x}(\pi)$.

Reversal of the above argument shows that consistency of $p_S(\pi,\cdot)$ for $\pi$ with $\#\pi>1$ is enough for $Q_S$ to be consistent in \eqref{consistency line 4}.
\end{proof}
A priori, it is not obvious that either of the implications in the above theorem must hold, and it is potentially useful to know that a consistent family of partition-valued transition kernels is sufficient to construct a consistent family of tree-valued processes by the AB Algorithm, provided that $p(B,{\bf1})<1$ for every $B$.  
\paragraph{Infinitely exchangeable kernels}
A tree-valued process $(T_j,j\geq1)$ on $\mathcal{T}$ is {\em infinitely exchangeable} if its finite-dimensional distributions are both finitely exchangeable for every $n\geq1$ and consistent.  More precisely, for each $n\geq1$ let $F_n$ be a probability measure on $\treesn$ and let $F:=(F_n,n\geq1)$ be the family of finite-dimensional distributions on $(\treesn,n\geq1)$.  The collection of spaces $(\treesn,n\geq1)$ forms a projective system, i.e.\ for every $m\leq n$ and injection map $\varphi_{m,n}:[m]\rightarrow[n]$, there is an associated projection $\varphi_{m,n}^*:\treesn\rightarrow\mathcal{T}_m$.  The collection of finite-dimensional measures $F$ is {\em infinitely exchangeable} if for each $m\leq n$ and injection map $\varphi_{m,n}$
$$F_m\equiv F_n\varphi^{*-1}_{m,n}.$$
That is, the measure induced on $\treesn$ by $\varphi_{m,n}$, $F_n\varphi^{*-1}_{m,n}$, corresponds to $F_m$.

A family of Markov kernels $\{p_n(\cdot,\cdot),n\geq1\}$ is infinitely exchangeable if $p_m(t,\cdot)=p_n(t^*,\varphi_{m,n}^{*-1}(\cdot))$ for all $m\leq n$ and injection maps $\varphi_{m,n}:[m]\rightarrow[n]$ \cite{BurkeRosenblatt1958}.  Putting together theorems \ref{thm:exchangeable branching kernel} and \ref{thm:consistent branching kernel} we arrive at a condition for the infinite exchangeability of $Q$ in terms of associated partition-valued Markov kernels.  In particular, if $\{p_S:S\subseteq A\}$ are finitely exchangeable and consistent, and $p_S(\cdot,{\bf1}_S)<1$ for every $S$, then $Q$ is infinitely exchangeable and there is a unique transition measure $Q^{\infty}$ on $\mathcal{T}$, the space of fragmentation trees of $\mathbb{N}$, such that for every $n\geq1$ and $t,t'\in\treesn$,
$$Q_n^{\infty}(t,t')=Q^{\infty}(t^{\infty},\{t^*\in\mathcal{T}:t^*_{|[n]}=t'\})$$
for any $t^{\infty}\in\{t^*:t^*_{|[n]}=t\}$.
The coalescent process does not satisfy this condition because it becomes absorbed in the one-block state almost surely, but other known processes do, e.g.\ exchangeable fragmentation-coalescence (EFC) processes \cite{Berestycki2004} and $\varrho_{\nu}$-Markov processes \cite{Crane2011a}.  We now turn our attention to the $\varrho_{\nu}$-Markov process.

\section{Cut-and-Paste algorithm}\label{section:cut-and-paste algorithm}
We now consider an algorithm for generating a random sequence of set partitions.  A special realization of this algorithm has been presented in \cite{Crane2011a}, which is called the $\varrho_{\nu}$-Markov process for its connection to the paintbox process of Kingman \cite{Kingman1978a}.  Here we outline a more general algorithm, which we call the {\em cut-and-paste} (CP) algorithm, and we shall henceforth refer to the aforementioned $\varrho_{\nu}$-process by the more descriptive title of {\em cut-and-paste} process with parameter $\nu$, or $\cp(\nu)$ process.  

For $A\subseteq\mathbb{N}$, let $\{P_b:b\subseteq A\}$ be a collection of probability measures on $\mathcal{P}_b$ for each $b\subseteq A$, and let $\mu$ denote a probability measure on an at most countable set of labels, which we without loss of generality take to be the set of natural numbers $\mathbb{N}$.  Given a set partition $\pi:=\{\pi_1,\ldots,\pi_k\}\in\mathcal{P}_A$, we generate $\pi'\in\mathcal{P}_A$ by the cut-and-paste algorithm as follows.
\paragraph{Cut-and-Paste (CP) Algorithm}

\vspace{3mm}
\begin{itemize}
	\item[(i)] Generate independent random partitions $C_1,\ldots,C_k$, where for each $i=1,\ldots,k$, $C_i:=\{C_{i,1},\ldots,C_{i,k_i}\}\sim P_{\pi_i}$ is a random partition of block $\pi_i$ of $\pi$, and we list the blocks of $C_i$ in order of appearance.
	\item[(ii)] Generate independent random permutations $\sigma_1,\ldots,\sigma_k$, where for each $i=1,\ldots,k$ $\sigma_i$ is a uniform random permutation of $[\#C_i]=[k_i]$.
	\item[(iii)] Independently for each $i=1,\ldots,k$, generate $m_i:=(m_{i,1},\ldots,m_{i,k_i})$ by drawing without replacement from $\mu$ (a size-biased ordering of the atoms of $\mu$) and assigning label $m_{i,\sigma_i(j)}$ to block $C_{i,j}$ of $C_i$.
	\item[(iv)] For each $l\in\mathbb{N}$, put $\pi'_{l}=\{C_{ij}:m_{i\sigma_i(j)}=l\}$, the collection of blocks of $C_1,\ldots,C_k$ which are labeled $l$ in step (iii).
	\item[(v)] Put $\pi':=\{\pi'_{l}:l\in\mathbb{N}\}\backslash\{\emptyset\}$, the non-empty collection of $\pi'_{l}$ from step (v).
\end{itemize}
 
The name cut-and-paste is derived from steps (i) and (iv) of this algorithm which involve, respectively, cutting (partitioning) the blocks of $\pi$ independently according to some measure and then pasting (coagulating) blocks which are assigned the same label in step (iii).  This procedure can be synthesized in the form of a $k\times\mathbb{N}$ matrix, a generalization of the matrix construction of the $\cp(\nu)$ process in \cite{Crane2011a}, for any $k=1,2,\ldots,\infty$, as follows.

Let $\pi,C_1,\ldots,C_k,\sigma_1,\ldots,\sigma_k,m_1,\ldots,m_k$ be as above.  Write $m\sigma_i(j):=m_{i,\sigma(j)}$ and $(m\sigma_i)^{-1}(l):=\{j:m_{i,\sigma_i(j)}=l\}$.  If $(m\sigma_i)^{-1}(l)=\emptyset$ then we write $C_{i,(m\sigma_i)^{-1}(l)}=\emptyset$ in what follows. Then put $\pi'$ equal to the non-empty column totals of the matrix
\begin{displaymath}
\bordermatrix{\text{}&C_{\subdot 1} & C_{\subdot 2} & \ldots & C_{\subdot j} & \ldots\cr
\pi_1 & C_{1,(m\sigma_1)^{-1}(1)} & C_{1,(m\sigma_1)^{-1}(2)} &\ldots& C_{1,(m\sigma_1)^{-1}(j)} & \ldots\cr
\pi_2 & C_{2,(m\sigma_2)^{-1}(1)} & C_{2,(m\sigma_2)^{-1}(2)} &\ldots& C_{2,(m\sigma_2)^{-1}(j)} & \ldots\cr
\vdots & \vdots & \vdots & \ddots & \vdots\cr
\pi_k & C_{k,(m\sigma_k)^{-1}(1)} & C_{k,(m\sigma_k)^{-1}(2)} &\ldots& C_{k,(m\sigma_k)^{-1}(j)}& \ldots\cr}.
\end{displaymath}
That is $\pi':=\{\pi'_l:l=1,2,\ldots\}\backslash\{\emptyset\}$ where $\pi'_l=\bigcup_{i=1}^k C_{i,(m\sigma_i)^{-1}(l)}$ for each $l=1,2,\ldots.$

The above procedure is pretty flexible, and the full extent of processes which are generated in this way remains to be seen.  A particular process which arises according to a special case of the CP algorithm is the $\cp(\nu)$ process where the measure $\mu$ is assumed to be the uniform distribution on $[k]$ for some $k\geq1$.  This generates a tree-valued process, the {\em cut-and-paste ancestral branching} process, with some special properties, which we now discuss.
\section{Cut-and-paste ancestral branching processes}\label{section:cut-and-paste ancestral branching}
Above, we have studied the general formulation of both the ancestral branching algorithm on $\mathcal{T}$ and cut-and-paste algorithm on $\mathcal{P}$ and have shown some general relationships between exchangeable and consistent Markov kernels on partitions and their corresponding AB kernels on $\mathcal{T}$.  We now turn our attention to a particular family of partition-valued Markov processes which we previously studied in \cite{Crane2011a}.  First, we discuss some preliminaries.

Let $\masspartition=\{(s_1,s_2,\ldots):s_1\geq s_2\geq\ldots\geq0,\mbox{ }\sum_{i}s_i\leq1\}$ be the space of {\em ranked-mass partitions}.  For $s\in\masspartition$, let $X:=(X_1,X_2,\ldots)$ be independent random variables with distribution 
\begin{displaymath}\mathbb{P}_s(X_i=j)=\left\{\begin{array}{cc}
s_j, & j\geq1\\
1-\sum_{k=1}^{\infty}s_k, & j=-i\\
0,&\mbox{otherwise.}\end{array}\right.\end{displaymath}  The partition $\Pi(X)$ generated by $s$ through $X$ satisfies $i\sim_{\Pi(X)}j$ if and only if $X_i=X_j.$  The distribution of $\Pi(X)$ is written $\varrho_s$ and $\Pi(X)$ is called the {\em paintbox based on $s$}.  For a probability measure $\nu$ on $\masspartition$, the paintbox based on $\nu$ is the $\nu$-mixture of paintboxes, written $\varrho_{\nu}(\cdot):=\int_{\masspartition}\varrho_{s}(\cdot)\nu(ds).$  Any partition obtained in this way is an exchangeable random partition of $\mathbb{N}$ and every infinitely exchangeable partition admits a representation as the paintbox generated by some $\nu$. See \cite{BertoinPIMS} and \cite{Pitman2005} for more details on the paintbox process.

For any probability measure $\nu$ on $\masspartitionk:=\{s\in\masspartition:s_j=0\mbox{ }\forall j>k,\mbox{ }\sum s_j = 1\}$, the {\em ranked $k$-simplex}, let $\varrho_{\nu}(\cdot)$ be the paintbox based on $\nu$ as described above.  For each $n\geq1$, define finite-dimensional transition probabilities on $\partitionsnk$ by
\begin{equation}p_n(B,B';\nu):=\frac{k!}{(k-\#B')!}\prod_{b\in B}\frac{(k-\#B'_{|b})!}{k!}\varrho_{\nu}(B'_{|b}).\label{eq:fidi tps}\end{equation}
The collection $(p_n(\cdot,\cdot;\nu),n\geq1)$ of transition probabilities characterizes an infinitely exchangeable Markov process on $\mathcal{P}^{(k)}$, called the {\em cut-and-paste} process with parameter $\nu$, $\cp(\nu)$-process, under the usual deletion operation $D_{n,n+1}:\mathcal{P}_{[n+1]}\rightarrow\partitionsn$, $B\mapsto D_{n,n+1}(B):=B_{|[n]}$ \cite{Crane2011a}.

The transition mechanism on $\partitionsk$ characterized by the finite-dimensional transition probabilities in \eqref{eq:fidi tps} admits the following useful construction.  Let $B\in\partitionsk$, $C:=(C_1,\ldots,C_k)$ be i.i.d.\ $\varrho_{\nu}$ paintboxes and $\sigma:=(\sigma_1,\ldots,\sigma_k)$ be i.i.d.\ uniform random permutations of $[k]$.  Construct the matrix
\begin{displaymath}
\bordermatrix{\text{}&C_{\subdot 1} & C_{\subdot 2} & \ldots & C_{\subdot k}\cr
B_1 & C_{1,\sigma_1(1)}\cap B_1 & C_{1,\sigma_1(2)}\cap B_1&\ldots& C_{1,\sigma_1(k)}\cap B_1\cr
B_2 & C_{2,\sigma_2(1)}\cap B_2 & C_{2,\sigma_2(2)}\cap B_2&\ldots& C_{2,\sigma_2(k)}\cap B_2\cr
\vdots & \vdots & \vdots & \ddots & \vdots\cr
B_k & C_{k,\sigma_k(1)}\cap B_k & C_{k,\sigma_k(2)}\cap B_k & \ldots & C_{k,\sigma_k(k)}\cap B_k}=:B\cap C^{\sigma}.
\end{displaymath}
We write $\cp(B,C,\sigma):=\left\{\bigcup_{j=1}^k(B_j\cap C_{j,\sigma_j(i)}),1\leq i\leq k\right\}\backslash\emptyset$ to be the partition whose blocks are given by the column totals of $B\cap C^{\sigma}$.  This formulation corresponds to the finite-dimensional transitions in \eqref{eq:fidi tps} and can be used in an alternate specification of the ancestral branching algorithm based on these transition probabilities.

For $n\geq1$, $k\geq2$ and $\nu$ a probability measure on $\masspartitionk$, let $p_n(\cdot,\cdot;\nu)$ denote the $\cp(\nu)$ transition probability on $\partitionsnk$ in \eqref{eq:fidi tps} and $q_n(\cdot,\cdot;\nu)=1-p_n(\cdot,\cdot;\nu)$ its complementary probability.  The family $\{p_n(\cdot,\cdot;\nu):n\geq1\}$ is infinitely exchangeable and so defines a unique transition probability $p_A(\cdot,\cdot;\nu)$ on $\mathcal{P}_A^{(k)}$ for each $A\subset\mathbb{N}$ by 
$$p_A(\cdot,\cdot;\nu):=p_{\#A}(\cdot,\cdot;\nu)$$
for $\#A<\infty$ and $p_A(\cdot,\cdot;\nu)=p_{\mathbb{N}}(\cdot,\cdot;\nu)$ otherwise.  

Furthermore, for $\nu$ non-degenerate at $(1,0,\ldots,0)$ we have that $p_b(\cdot,{\bf1}_b;\nu)<1$ for all $b\subset\mathbb{N}$ with $\#b>1$, and so \eqref{eq:branching Markov kernel} is well-defined and the results of section \ref{section:infinitely exchangeable processes} hold.  In particular, the $\mathcal{T}$-valued process induced by the finite-dimensional transition probabilities \eqref{eq:fidi tps} and the ancestral branching algorithm is infinitely exchangeable.

For the AB algorithm based on the transition probabilities of the $\cp(\nu)$ process, we can describe an alternative, though equivalent, formulation which is helpful in later sections.
\subsection{Alternative construction of the cut-and-paste ancestral branching Markov chain}\label{section:alternative construction}
We introduce a {\em genealogical indexing system} to label the elements of $t_A\in\trees_A$ (chapter 1.2.1 of Bertoin \cite{Bertoin2006}) as follows.

We write $$\mathcal{U}:=\bigcup_{n=0}^{\infty}\mathbb{N}^n$$ to denote the infinite set of all indices, with convention that $\mathbb{N}^0=\{\emptyset\}$.

For a fragmentation tree $T$, the $n$th generation of $T$ is the collection of children $t\in T$ such that $\#\anc(t)=n-1$.  For each $u=(u_1,\ldots,u_n)\equiv u_1u_2\cdots u_n\in\mathcal{U}$, $n$ is the generation of $u$.  Write $u-:=(u_1,\ldots,u_{n-1})$ to denote the parent of $u$ and $ui:=(u,i):=(u_1,\ldots,u_n,i)$ for the $i$th child of $u$.  As we are working in the context of fragmentations of subsets of $\mathbb{N}$, the $i$th child of $t\in T$ is the $i$th child to appear in a list when the elements of $\frag(t)$, the children of $t$, are listed in order of their least element.

A Markov chain on $\mathcal{T}^{(k)}$ which is governed by the same transition law as in the previous section can be constructed by a {\em genealogical branching procedure} as follows.

Let $k\geq2$ and $\nu$ be a probability measure on $\masspartitionk$ which is non-degenerate at $(1,0,\ldots,0)$.  For $T,T'\in\treesk$, the transition $T\mapsto T'$ occurs as follows.  Generate $\{B^{u}: u\in\mathcal{U}\}$ i.i.d.\ $\varrho_{\nu}^{(k)}$ partition sequences, where $\varrho_{\nu}^{(k)}:=\varrho_{\nu}\otimes\cdots\otimes\varrho_{\nu}$ is the product measure of paintboxes based on $\nu$, and $\{\sigma^{u}: u\in\mathcal{U}\}$ i.i.d.\ $k$-tuples of i.i.d.\ uniform permutations of $[k]$.

{\bf Genealogical Branching Procedure}
\begin{itemize}
	\item[(i)] Put $\Pi_{T'}=\cp(\Pi_T,B^{\emptyset},\sigma^{\emptyset})$, the partition obtained from the column totals of $\Pi_T\cap (B^{\emptyset})^{\sigma^{\emptyset}}$, as shown in section \ref{section:cut-and-paste algorithm};
	\item[(ii)] for $A^{u}\in T'$, put $A^{uj}$ equal to the $j$th block of $\cp(\Pi_{T_{|A^{u}}},B^{u},\sigma^{u})$ listed in order of least elements.
\end{itemize}
In other words, each $B^u$ is an independent $k$-tuple of independent paintboxes based on $\nu$ and we index this sequence just as we index the vertices of a tree.  Likewise, each $\sigma^u$ is an independent $k$-tuple $(\sigma^u_1,\ldots,\sigma^u_k)$ of i.i.d.\ uniform permutations of $[k]$.  The next state $T'$ is obtained from $T$ by a sequential branching procedure which starts from the root and progressively branches the roots of the subtrees restricted to each child of $T'$.
The children of $T'$ are given by $\{A^{u},u\in\mathcal{U}\}$ and for each $n\geq1$ the restriction to $[n]$ of $T'$ is $T'_{|[n]}=\{A^{u}\cap[n],u\in\mathcal{U}\}$.

The genealogical branching procedure simultaneously generates sequences of trees on $\treesn$ for every $n\geq1$.  It should be plain that this construction is equivalent to that in section \ref{section:cut-and-paste ancestral branching} since it uses the matrix construction of the $\cp(\nu)$ transition probabilities on $\mathcal{P}_A^{(k)}$.  The benefit to this construction is that it gives an explicit recipe which will be employed in the proofs of various properties of this process in later sections.  For completeness, we provide a proof that the finite-dimensional transition probabilities of this process coincide with \eqref{eq:fidi tps trees}.
\begin{prop}\label{prop:fidi tree tps}Let $T\mapsto T'\in\treesk$ be a transition generated by the above genealogical branching procedure.  For $n\geq1$, the finite-dimensional transition probability of the restricted transition $T_{|[n]}\mapsto T'_{|[n]}$ is
\begin{equation}Q_n(T,T';\nu):=\prod_{b\in T'}\frac{p_b(\Pi_{T_{|b}},\Pi_{T'_{|b}};\nu)}{q_b(\Pi_{T_{|b}},{\bf1}_{b};\nu)}.\label{eq:fidi tps trees}\end{equation}\end{prop}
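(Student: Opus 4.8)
The plan is to recognize the genealogical branching procedure as the AB algorithm of Section~\ref{section:ancestral branching algorithm} run with the $\cp(\nu)$ partition kernels $p_b(\cdot,\cdot;\nu)$ of \eqref{eq:fidi tps}, and then to read off \eqref{eq:fidi tps trees} from the closed form \eqref{eq:branching Markov kernel}, invoking consistency of the $\cp(\nu)$ process to pass from $\mathbb{N}$ to $[n]$.

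First I would analyze a single non-singleton block. At the root, step~(i) sets $\Pi_{T'}=\cp(\Pi_T,B^{\emptyset},\sigma^{\emptyset})$, and by the matrix construction of Section~\ref{section:cut-and-paste algorithm} the conditional law of $\cp(\Pi_T,B^{\emptyset},\sigma^{\emptyset})$ given $T$ is $p_A(\Pi_T,\cdot;\nu)$. If this output is the one-block partition ${\bf1}_A$, then the redundant index $A^{1}=A$ forces step~(ii) to re-examine the independent copy $\cp(\Pi_{T_{|A}},B^{1},\sigma^{1})$, and similarly down the chain $A=A^{1}=A^{11}=\cdots$ until the first non-trivial output appears; since $p_A(\cdot,{\bf1}_A;\nu)<1$ this happens after finitely many steps almost surely, and once the identical sets along that chain collapse (a fragmentation tree being a set of subsets) the children of $A$ in $T'$ form a partition equal to $\pi$ with conditional probability $p_A(\Pi_T,\pi;\nu)/q_A(\Pi_T,{\bf1}_A;\nu)$ --- precisely the geometric-series computation carried out below \eqref{eq:branching Markov kernel}. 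The same reasoning applies verbatim at every non-singleton $b=A^{u}\in T'$, with $\cp(\Pi_{T_{|b}},B^{u},\sigma^{u})$ in place of $\cp(\Pi_T,B^{\emptyset},\sigma^{\emptyset})$, and since the $k$-tuples $(B^{u},\sigma^{u})$ are i.i.d.\ over $u\in\mathcal{U}$ the branchings attached to distinct blocks of $T'$ are mutually independent. Multiplying the per-block factors over $\{b\in T':\#b\geq2\}$ --- equivalently, inducting on $\#A$ via the recursion \eqref{eq:recursive branching kernel} --- shows that for finite $A$ the one-step law of $T\mapsto T'$ on $\mathcal{T}_A^{(k)}$ is the AB kernel $Q_A(T,T';\nu)=\prod_{b\in T':\#b\geq2}p_b(\Pi_{T_{|b}},\Pi_{T'_{|b}};\nu)/q_b(\Pi_{T_{|b}},{\bf1}_b;\nu)$.

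For the restriction to $[n]$, I would run the procedure on all of $\mathbb{N}$ and observe that $T'_{|[n]}=\{A^{u}\cap[n]:u\in\mathcal{U}\}\setminus\{\emptyset\}$, whose non-singleton blocks are the $b=A^{u}\cap[n]$ with $\#(A^{u}\cap[n])\geq2$. By consistency of the $\cp(\nu)$ family $\{p_m(\cdot,\cdot;\nu):m\geq1\}$ from \cite{Crane2011a}, the restriction to $A^{u}\cap[n]$ of $\cp(\Pi_{T_{|A^{u}}},B^{u},\sigma^{u})$ has conditional law $p_{A^{u}\cap[n]}(\Pi_{T_{|A^{u}\cap[n]}},\cdot;\nu)$, using that restriction of reduced subtrees is transitive and that the root partition of $T_{|A^{u}}$ restricts to that of $T_{|A^{u}\cap[n]}$. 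Since the chain-collapse and the independence over $u$ survive restriction, discarding the now-singleton blocks yields $\mathbb{P}(T'_{|[n]}=t'\mid T)=\prod_{b\in t':\#b\geq2}p_b(\Pi_{T_{|b}},\Pi_{t'_{|b}};\nu)/q_b(\Pi_{T_{|b}},{\bf1}_b;\nu)$, which is \eqref{eq:fidi tps trees}. Alternatively and more slickly, Theorem~\ref{thm:consistent branching kernel} applies: $\{p_S(\cdot,\cdot;\nu):S\subseteq\mathbb{N}\}$ being consistent with $p_S(\cdot,{\bf1}_S;\nu)<1$, the induced family of AB kernels $\{Q_S(\cdot,\cdot;\nu)\}$ is consistent, so the law of $T'_{|[n]}$ is $Q_{[n]}(T_{|[n]},\cdot;\nu)$, which by the previous paragraph equals \eqref{eq:fidi tps trees}.

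I expect the main obstacle to be the middle step: justifying rigorously that the redundant genealogical indexing reproduces the rejection step~(iv) of the AB algorithm --- i.e.\ that collapsing the chain of identical sets $A^{u}=A^{u1}=\cdots$ yields exactly the $\cp(\nu)$ law conditioned away from ${\bf1}$ --- and checking that this collapse, together with the independence of the $(B^{u},\sigma^{u})$, is compatible with restriction to $[n]$, so that no spurious dependence or double-counting is introduced among the blocks of $T'_{|[n]}$. Once these points are settled the statement reduces to the closed form \eqref{eq:branching Markov kernel} and the consistency of the $\cp(\nu)$ process, both already in hand.
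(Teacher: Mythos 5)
Your proposal is correct and follows essentially the same route as the paper's proof: at each block you recover the conditioned law $p_b(\Pi_{T_{|b}},\cdot;\nu)/q_b(\Pi_{T_{|b}},{\bf1}_b;\nu)$ via the geometric series arising from the collapsed chain of trivial outputs, multiply over blocks by independence of the $(B^u,\sigma^u)$, and invoke consistency of the $\cp(\nu)$ kernels to pass to the restriction to $[n]$. Your treatment of the restriction step and the chain collapse is in fact more explicit than the paper's, which simply identifies the first index $m$ with $A^{u(m)}_{|[n]}\notin\{[n],\emptyset\}$ and iterates the recursive form of the kernel, but the underlying argument is the same.
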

\begin{proof}
Write $p_n(\cdot,\cdot)\equiv p_n(\cdot,\cdot;\nu)$ and $q_n(\cdot,\cdot)\equiv q_n(\cdot,\cdot;\nu)$.  For $n\geq1$, the branching of the root of $T'_{|[n]}$ given $T_{|[n]}$ is given by $A^{u(m)}_{|[n]}$ for $u(m)\in\mathcal{U}$ such that $u(m)=(\underbrace{1,\ldots,1}_{m\mbox{ times}},0,\ldots)$ and $m$ is the smallest $m\geq1$ such that $A^{u(m)}_{|[n]}\notin\{[n],\emptyset\}$, i.e.\ the first non-trivial partition of $[n]$ obtained by the above procedure.  The distribution of the branching of the root of $T'_{|[n]}$ given $T_{|[n]}$ obtained in this way is 
$$\sum_{i=0}^{\infty}p_n(\Pi_{T_{|[n]}},\Pi_{T'_{|[n]}})p_n(\Pi_{T_{|[n]}},{\bf 1}_n)^i=\frac{p_n(\Pi_{T_{|[n]}},\Pi_{T'_{|[n]}})}{q_n(\Pi_{T_{|[n]}},{\bf1}_n)}.$$

By independence of the steps of the procedure, we can write the distribution of the transition $T\mapsto T'$ recursively as 
$$\pi_n(T,T')=\frac{p_n(\Pi_{T},\Pi_{T'})}{q_n(\Pi_{T},{\bf1}_n)}\prod_{b\in\Pi_{T'}}\pi_b(T_{|b},T'_{|b}).$$
Iterating the above argument yields \eqref{eq:fidi tps trees}.
\end{proof}
\subsection{Equilibrium measure}\label{section:equilibrium measure}
The form of $Q_n(T,T';\nu)$ in \eqref{eq:fidi tps trees} is a product of independent transition probabilities of the branching at the root in each of the subtrees of $T'$.  It is known that for $\nu$ non-degenerate at $(1,0,\ldots,0)\in\masspartitionk$, $p_n(\cdot,\cdot;\nu)$ has a unique equilibrium distribution for each $n\geq1$ \cite{Crane2011a}.  Since $p_n(B,B';\nu)>0$ for every $n\geq1$ and $B,B'\in\partitionsnk$, we have that $Q_n(t,t';\nu)>0$ for all $t,t'\in\treesnk$ and so each $Q_n(\cdot,\cdot;\nu)$ is aperiodic and irreducible for non-degenerate $\nu\in\masspartitionk$.  The following proposition is immediate.
\begin{prop}\label{prop:existence fidi stationary}Let $\nu$ be a probability measure on $\masspartitionk$ such that $\nu((1,0,\ldots,0))<1$ and let $Q_n(\cdot,\cdot;\nu)$ be the $\cp(\nu)$-ancestral branching Markov kernel, then there exists a unique measure $\rho_n(\cdot;\nu)$ on $\treesnk$ which is stationary for $Q_n(\cdot,\cdot;\nu)$ for each $n\geq1$.\end{prop}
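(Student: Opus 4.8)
The plan is to reduce the statement to the classical ergodic theory of finite-state Markov chains. For each fixed $n\geq1$ the state space $\treesnk$ is a finite set (it sits inside the finite power set of the finite power set of $[n]$), and for $n=1$ it is a singleton so the claim is trivial; hence assume $n\geq2$. First I would check that the kernel $Q_n(\cdot,\cdot;\nu)$ has all strictly positive entries on $\treesnk\times\treesnk$. By \eqref{eq:fidi tps trees}, $Q_n(t,t';\nu)$ is a finite product, over $\{b\in t':\#b\geq2\}$, of factors of the form $p_b(\Pi_{t_{|b}},\Pi_{t'_{|b}};\nu)/q_b(\Pi_{t_{|b}},{\bf1}_b;\nu)$. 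Because $\nu$ is non-degenerate at $(1,0,\ldots,0)$, the $\cp(\nu)$ transition probabilities satisfy $p_m(B,B';\nu)>0$ for every $m\geq1$ and every $B,B'\in\mathcal{P}_{[m]}^{(k)}$, as recalled just above the proposition (cf.\ \cite{Crane2011a}); in particular $q_b(\cdot,{\bf1}_b;\nu)>0$, so each factor is well-defined and strictly positive. Hence $Q_n(t,t';\nu)>0$ for all $t,t'\in\treesnk$.

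Next I would invoke the Perron--Frobenius theorem: a stochastic matrix on a finite index set with all entries strictly positive is irreducible and aperiodic, and therefore admits a unique stationary distribution. This yields the desired $\rho_n(\cdot;\nu)$ for each $n\geq1$. Uniqueness in fact already follows from irreducibility alone, while aperiodicity additionally gives that $\rho_n(\cdot;\nu)$ is the limiting distribution from any starting point, a fact that will be convenient later.

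I do not expect a genuine obstacle here, in line with the text's remark that the proposition is immediate. The only step needing any care is the strict positivity of $Q_n$, which rests entirely on the strict positivity of the underlying $\cp(\nu)$ kernels under the non-degeneracy hypothesis on $\nu$, imported from \cite{Crane2011a}; everything else is standard finite-state Markov chain theory. If one preferred not to appeal to strict positivity, existence alone could instead be obtained by applying Brouwer's fixed point theorem to the continuous self-map of the probability simplex on $\treesnk$ induced by $Q_n$, with uniqueness supplied separately by irreducibility; but Perron--Frobenius delivers existence and uniqueness at once.
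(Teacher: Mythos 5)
Your proposal is correct and follows essentially the same route as the paper: the text preceding the proposition notes that $p_n(B,B';\nu)>0$ for all $B,B'\in\partitionsnk$ under the non-degeneracy hypothesis, concludes that $Q_n(t,t';\nu)>0$ for all $t,t'\in\treesnk$, hence that $Q_n$ is irreducible and aperiodic on the finite state space $\treesnk$, and declares the existence and uniqueness of the stationary measure immediate. Your explicit appeal to Perron--Frobenius merely spells out the final step the paper leaves implicit.
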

It is easy to see that the above proposition can be generalized to general Markov chains by modifying the above condition on $\nu\neq(1,0,\ldots,0)$ to state $p_n(B,B)>0$ for every $n\geq1$ and $B\in\partitionsn$ and $p_n(\cdot,\cdot)$ is irreducible for every $n\geq1$.

The existence of $\rho_n(\cdot;\nu)$ and the finite exchangeability and consistency of $Q_n(\cdot,\cdot;\nu)$ for each $n\geq1$ induce finite exchangeability and consistency for the collection $(\rho_n(\cdot;\nu),n\geq1)$ of equilibrium measures.
\begin{prop}\label{prop:inf exch stationary}Let $(Q_n(\cdot,\cdot),n\geq1)$ be an infinitely exchangeable collection of ancestral branching Markov kernels \eqref{eq:branching Markov kernel} on $(\treesn,n\geq1)$ and suppose for each $n\geq1$ $\rho_n(\cdot)$ is a unique stationary distribution for $Q_n(\cdot,\cdot)$.  Then the family $(\rho_n(\cdot),n\geq1)$ is infinitely exchangeable.\end{prop}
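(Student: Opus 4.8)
The plan is to unfold the conclusion ``$(\rho_n,n\ge1)$ is infinitely exchangeable'' into its two constituent requirements — finite exchangeability of each $\rho_n$ and consistency of the family under the deletion maps — and to derive each from the corresponding property of the kernels $(Q_n)$ together with the assumed uniqueness of the stationary law. The single guiding principle in both cases is this: if $\mu$ is stationary for $Q_n$ and $g$ is a map under which $Q_n$ transforms compatibly into another kernel in the family (a relabeling, or a restriction), then the pushforward $\mu g^{-1}$ is stationary for that other kernel, and uniqueness then pins it down. Since every injection $\varphi_{m,n}\colon[m]\to[n]$ factors, after composing with suitable permutations of $[m]$ and of $[n]$, through the canonical inclusion $[m]\hookrightarrow[n]$, and the latter induces the iterated deletion $D_{m,n}=D_{m,m+1}\circ\cdots\circ D_{n-1,n}$, it suffices to treat (a) permutations of $[n]$ and (b) a single deletion $D_{n-1,n}$.

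For (a), fix $n$ and a permutation $\sigma$ of $[n]$ with induced bijection $\sigma^*\colon\treesn\to\treesn$. Finite exchangeability of $Q_n$ (the hypothesis, via \eqref{eq:exchangeable Markov kernel}) gives $Q_n(\sigma^*t,\sigma^*t')=Q_n(t,t')$ for all $t,t'$, so that for every $t'\in\treesn$,
$$\sum_{t}\rho_n\big((\sigma^*)^{-1}t\big)\,Q_n(t,t')=\sum_{s}\rho_n(s)\,Q_n(\sigma^*s,t')=\sum_{s}\rho_n(s)\,Q_n\big(s,(\sigma^*)^{-1}t'\big)=\rho_n\big((\sigma^*)^{-1}t'\big).$$
Thus $\rho_n(\sigma^*)^{-1}$ is again stationary for $Q_n$, and since (by hypothesis) $Q_n$ has a unique stationary distribution, $\rho_n(\sigma^*)^{-1}=\rho_n$; i.e.\ $\rho_n$ is exchangeable.

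For (b), fix $n\ge2$, abbreviate $D:=D_{n-1,n}\colon\treesn\to\mathcal{T}_{n-1}$, and set $\tilde\rho_{n-1}:=\rho_nD^{-1}$, the law on $\mathcal{T}_{n-1}$ obtained by pushing $\rho_n$ forward under restriction to $[n-1]$. I claim $\tilde\rho_{n-1}$ is stationary for $Q_{n-1}$. Writing $\tilde\rho_{n-1}(s)=\sum_{t\colon Dt=s}\rho_n(t)$ and invoking consistency of the kernel family in the form \eqref{eq:consistent Markov kernel}, namely $Q_{n-1}(Dt,s')=Q_n\big(t,D^{-1}(s')\big)=\sum_{t'\colon Dt'=s'}Q_n(t,t')$, one computes, with all sums over finite sets,
\begin{align*}
\sum_{s\in\mathcal{T}_{n-1}}\tilde\rho_{n-1}(s)\,Q_{n-1}(s,s')&=\sum_{t\in\treesn}\rho_n(t)\,Q_{n-1}(Dt,s')\\
&=\sum_{t'\colon Dt'=s'}\ \sum_{t\in\treesn}\rho_n(t)\,Q_n(t,t')\\
&=\sum_{t'\colon Dt'=s'}\rho_n(t')=\tilde\rho_{n-1}(s'),
\end{align*}
where stationarity of $\rho_n$ for $Q_n$ was used in the penultimate equality. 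Uniqueness of the stationary law of $Q_{n-1}$ now forces $\tilde\rho_{n-1}=\rho_{n-1}$, which is precisely consistency across one level; iterating in $n$ and combining with (a) gives $\rho_m=\rho_n\varphi_{m,n}^{*-1}$ for every injection $\varphi_{m,n}\colon[m]\to[n]$, the asserted infinite exchangeability.

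The argument is in essence bookkeeping, and I do not anticipate a serious obstacle. The only points needing care are (i) matching the definition of consistency in \eqref{eq:consistent Markov kernel} — where the deletion and restriction operations on $\treesn$ are phrased through the set-deletion $D_{n+1}$ — to the pushforward identity used above, keeping the direction of the maps straight; and (ii) observing that each $\treesn$ is finite, so stationary measures automatically exist and the word ``unique'' in the hypothesis is the only nontrivial input. In particular the ancestral-branching product form \eqref{eq:branching Markov kernel} of $Q_n$ is not needed beyond what is already packaged into the assumption that $(Q_n)$ is infinitely exchangeable (finitely exchangeable and consistent), as established in Section~\ref{section:infinitely exchangeable processes}.
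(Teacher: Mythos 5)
Your proof is correct and follows essentially the same route as the paper's: push the stationary measure forward under the map induced by an injection, verify via consistency/exchangeability of the kernels that the pushforward is again stationary, and invoke uniqueness. The only cosmetic difference is that you split the argument into permutations and single deletions, whereas the paper runs the identical computation once for a general injection $\varphi:[m]\to[n]$.
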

\begin{proof}
For $T''\in\mathcal{T}_{n+1}$
$$\rho_{n+1}(T'')=\sum_{T^*\in\mathcal{T}_{n+1}}\rho_{n+1}(T^*)Q_{n+1}(T^*,T'')$$
by stationarity.

Let $T'\in\treesn$ and for $1\leq m\leq n$ let $\varphi:[m]\rightarrow[n]$ be an injection with associated projection $\varphi^*:\treesn\rightarrow\mathcal{T}_m$.  Then
\begin{eqnarray}
\underbrace{\sum_{T''\in \varphi^{*-1}(T')}\rho_{n}(T'')}_{(\rho_{n}\varphi^{*-1})(T')}&=&\sum_{T''\in \varphi^{*-1}(T')}\sum_{T^*\in\mathcal{T}_{n}}\rho_{n}(T^*)Q_{n}(T^*,T'')\label{prop:consistent stationary:line1}\\
&=&\sum_{T\in\mathcal{T}_m}\sum_{T^*\in \varphi^{*-1}(T)}\rho_{n}(T^*)\underbrace{\left[\sum_{T''\in \varphi^{*-1}(T')}Q_{n}(T^*,T'')\right]}_{Q_n\varphi^{*-1}(T,T')\equiv Q_m(T,T')}\label{prop:consistent stationary:line2}\\
&=&\sum_{T\in\mathcal{T}_m}Q_m(T,T')\underbrace{\sum_{T^*\in \varphi^{*-1}(T)}\rho_{n}(T^*)}_{\rho_n\varphi^{*-1}(T)}\label{prop:consistent stationary:line3}\\
&=&\sum_{T\in\mathcal{T}_m}(\rho_{n}\varphi^{*-1})(T)Q_m(T,T').\label{prop:consistent stationary:line4}
\end{eqnarray}
The expression in \eqref{prop:consistent stationary:line2} follows from \eqref{prop:consistent stationary:line1} by changing the order of summation and noting that each $T^*\in\mathcal{T}_{n}$ corresponds to exactly one $T\in\mathcal{T}_m$ through the mapping $\varphi^{*}$; \eqref{prop:consistent stationary:line3} follows from \eqref{prop:consistent stationary:line2} by the consistency of $Q_n(\cdot,\cdot)$ for each $n\geq1$; and \eqref{prop:consistent stationary:line4} follows \eqref{prop:consistent stationary:line3} by the definition of induced measures.
Hence, the induced measure $\rho_{n}\varphi^{*-1}$ is stationary for $Q_n$.  By uniqueness, $\rho_{n}\varphi^{*-1}\equiv\rho_m$ for every injective mapping $\varphi:[m]\rightarrow[n]$.  Hence, $(\rho_n, n\geq1)$ is an infinitely exchangeable family of measures on $(\mathcal{T}_n,n\geq1).$
\end{proof}
The existence of an infinitely exchangeable equilibrium measure $\rho(\cdot)$ on $\mathbb{N}$-labeled trees, $\trees$, is a direct consequence of the finite exchangeability and consistency of the system $(\rho_n(\cdot),n\geq1)$ shown in proposition \ref{prop:inf exch stationary} and Kolmogorov's extension theorem \cite{Billingsley1995}.  In this case, the measure $\rho(\cdot)$ satisfies
$$\rho_n(T_n)=\rho\left(\{T\in\trees:T_{|[n]}=T_n\}\right)$$
for every $n\geq1$.

The above results for the equilibrium measure $\rho(\cdot)$ apply specifically to the $\cp(\nu)$ ancestral branching process under the condition that $\nu$ is non-degenerate at $(1,0,\ldots,0)\in\masspartitionk$.
\begin{cor}\label{cor:existence inf exch stationary}For $\nu$ non-degenerate at $(1,0,\ldots,0)\in\masspartitionk$, the collection of stationary measures $(\rho_n(\cdot;\nu),n\geq1)$ in proposition \ref{prop:existence fidi stationary} is infinitely exchangeable.\end{cor}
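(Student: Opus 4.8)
The plan is to obtain Corollary \ref{cor:existence inf exch stationary} as an immediate consequence of Proposition \ref{prop:inf exch stationary}, once I have checked that the $\cp(\nu)$ ancestral branching kernels $(Q_n(\cdot,\cdot;\nu),n\geq1)$ form an infinitely exchangeable family of ancestral branching Markov kernels each of which has a unique finite-dimensional equilibrium. Concretely, the two ingredients of Proposition \ref{prop:inf exch stationary} are: (a) infinite exchangeability of $(Q_n(\cdot,\cdot;\nu),n\geq1)$, and (b) uniqueness of $\rho_n(\cdot;\nu)$ for each $n$; the latter is exactly Proposition \ref{prop:existence fidi stationary}, so essentially all of the work is in assembling (a).

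First I would recall that, by \cite{Crane2011a} (and as recorded in the discussion around \eqref{eq:fidi tps}), the partition-valued family $\{p_n(\cdot,\cdot;\nu):n\geq1\}$ is finitely exchangeable and consistent, and that non-degeneracy of $\nu$ at $(1,0,\ldots,0)$ guarantees $p_b(\cdot,{\bf1}_b;\nu)<1$ for every $b\subset\mathbb{N}$ with $\#b>1$, so that the ancestral branching kernel \eqref{eq:branching Markov kernel} is well-defined and, by Proposition \ref{prop:fidi tree tps}, equals $Q_n(\cdot,\cdot;\nu)$ in \eqref{eq:fidi tps trees}. Then I would invoke Theorem \ref{thm:exchangeable branching kernel} — finite exchangeability of the restricted partition kernels $\{P_S^*:S\subset\mathbb{N}\}$ yields finite exchangeability of $\{Q_A(\cdot,\cdot;\nu):A\subset\mathbb{N}\}$ — and Theorem \ref{thm:consistent branching kernel} in its forward direction — consistency of each $p_S(\pi,\cdot;\nu)$ for $\pi\neq{\bf1}_S$ (which is immediate from consistency of the full $\cp(\nu)$ family) yields consistency of $\{Q_S(\cdot,\cdot;\nu):S\subseteq\mathbb{N}\}$. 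Note the supplementary hypotheses of those two theorems cause no trouble here: the condition $p_A(\pi,{\bf1}_A)=p_B(\psi^*(\pi),{\bf1}_B)$ is automatic once finite exchangeability of $\{P_S^*\}$ holds (as observed at the end of the proof of Theorem \ref{thm:exchangeable branching kernel}), and the additive relation involving ${\bf e}_{S^x}$ in Theorem \ref{thm:consistent branching kernel} is needed only for the converse implication, which I do not use. Combining these, $(Q_n(\cdot,\cdot;\nu),n\geq1)$ is an infinitely exchangeable collection of ancestral branching Markov kernels.

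Finally, since $\nu$ is non-degenerate we have $p_n(B,B';\nu)>0$ for all $B,B'\in\partitionsnk$, hence $Q_n(t,t';\nu)>0$ for all $t,t'\in\treesnk$, so each $Q_n(\cdot,\cdot;\nu)$ is irreducible and aperiodic on the finite space $\treesnk$; Proposition \ref{prop:existence fidi stationary} then supplies the unique stationary $\rho_n(\cdot;\nu)$. With infinite exchangeability of the kernels and uniqueness of the equilibria both in place, Proposition \ref{prop:inf exch stationary} applies verbatim and gives that $(\rho_n(\cdot;\nu),n\geq1)$ is infinitely exchangeable, which is the assertion of the corollary. I do not anticipate a genuine obstacle: the proof is a short chain of citations, and the only item one must be slightly careful about is verifying that the hypotheses of Theorems \ref{thm:exchangeable branching kernel} and \ref{thm:consistent branching kernel} are indeed met by the $\cp(\nu)$ family — which reduces to the known exchangeability, consistency, and positivity of $\cp(\nu)$ from \cite{Crane2011a}. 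One could append the remark that Kolmogorov's extension theorem then produces the associated exchangeable equilibrium $\rho(\cdot;\nu)$ on $\trees$, though that lies outside the stated corollary.
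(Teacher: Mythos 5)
Your proposal is correct and follows exactly the route the paper intends: the corollary is obtained by verifying that the $\cp(\nu)$ ancestral branching kernels are infinitely exchangeable (via Theorems \ref{thm:exchangeable branching kernel} and \ref{thm:consistent branching kernel} applied to the exchangeable, consistent $\cp(\nu)$ partition kernels) and have unique finite-dimensional equilibria (Proposition \ref{prop:existence fidi stationary}), after which Proposition \ref{prop:inf exch stationary} applies verbatim. Your added care in checking that the supplementary hypotheses of those two theorems are either automatic or unneeded for the forward direction is a correct and worthwhile elaboration of what the paper leaves implicit.
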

Although the existence of a unique stationary measure on $\mathcal{T}^{(k)}$ is implicit in the construction of the transition at the beginning of this section, the form of the finite-dimensional and infinite-dimensional stationary measure remains unknown.  Note that, though the transition probabilities \eqref{eq:branching Markov kernel} are conditionally of {\em fragmentation type}, i.e.\ given $T$ and $b\in T'$ the children of $b$ are distributed independently of the rest of $T'$, the equilibrium measure need not be of this form.  Furthermore, it is of interest whether or not some subclass of the $\cp(\nu)$ ancestral branching Markov chains is reversible and, if so, under what conditions this property holds.
\subsubsection{Continuous-time ancestral branching process}\label{section:continuous-time}
An infinitely exchangeable collection $(Q_n,n\geq1)$ of ancestral branching transition probabilities can be embedded in continuous time in a straightforward way by defining the Markovian infinitesimal jump rates $r_n(\cdot,\cdot)$ on $\treesn$ by
\begin{eqnarray}
		r_n(T,T')=\left\{\begin{array}{cc}
													\lambda Q_n(T,T'), & T\neq T'\\
													0, & \mbox{otherwise,}
												\end{array}\right.\label{eq:rates}
	\end{eqnarray}
for some $\lambda>0$.
	
\begin{defn}\label{defn:process}A process $T:=(T(t),t\geq0)$ is an ancestral branching Markov process if for each $n\geq1$, the restriction $T_{|[n]}:=(T_{|[n]}(t),t\geq0)$ is a Markov process on $\treesn$ with infinitesimal transition rates $r_n(\cdot,\cdot)$.\end{defn}
A process on $\mathcal{T}$ whose finite-dimensional restrictions are governed by $r_n$ can be constructed by running a Markov chain on $\treesn$ governed by \eqref{eq:fidi tps trees} in which only transitions $T\mapsto T'$ for $T\neq T'$ are permitted, and adding a hold time which is exponentially distributed with mean $1/[\lambda-\lambda r_n(T,T)]$.  The following proposition is a corollary of theorems \ref{thm:exchangeable branching kernel} and \ref{thm:consistent branching kernel}.
\begin{cor}\label{cor:consistent Q-matrix}For measure $\nu$ on $\masspartitionk$, the collection $(R_n^{\nu},n\geq1)$ of finite-dimensional $Q$-matrices based on \eqref{eq:rates} are consistent.\end{cor}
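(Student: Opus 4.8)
The plan is to reduce the corollary to the two structural theorems already established. The continuous-time infinitesimal rates $r_n(\cdot,\cdot)$ in \eqref{eq:rates} are built directly from the discrete-time ancestral branching kernels $Q_n(\cdot,\cdot;\nu)$ of \eqref{eq:fidi tps trees}, which in turn are the AB kernels generated by the $\cp(\nu)$ partition kernels $p_n(\cdot,\cdot;\nu)$ of \eqref{eq:fidi tps}. So the argument has three layers: first observe that the $\cp(\nu)$ partition kernels form a consistent (and finitely exchangeable) family, which is recorded in \cite{Crane2011a}; second invoke Theorem \ref{thm:consistent branching kernel} to conclude that the family $(Q_n(\cdot,\cdot;\nu),n\geq1)$ of AB kernels on $(\treesnk,n\geq1)$ is consistent; third check that passing to the off-diagonal rate matrix $R_n^{\nu}$ via \eqref{eq:rates} preserves consistency.

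First I would verify the hypotheses of Theorem \ref{thm:consistent branching kernel}. The forward direction of that theorem states that if each $p_S(\pi,\cdot)$ is consistent for every $\pi\neq{\bf1}_S$, then the AB family $Q$ is consistent. For the $\cp(\nu)$ process, consistency of $(p_n(\cdot,\cdot;\nu),n\geq1)$ under the deletion maps $D_{n,n+1}$ is exactly the content of the infinite-exchangeability result quoted from \cite{Crane2011a} (the family defines a genuine Markov process on $\mathcal{P}^{(k)}$), and restricting to $\pi\neq{\bf1}_S$ is harmless; the side condition $p_b(\cdot,{\bf1}_b;\nu)<1$ for $\#b>1$ holds whenever $\nu$ is non-degenerate at $(1,0,\ldots,0)$, as noted in the paragraph preceding section \ref{section:alternative construction} — and for $\nu$ degenerate at $(1,0,\ldots,0)$ the process is trivial (it never moves) so consistency of the rate matrices is immediate. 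Hence Theorem \ref{thm:consistent branching kernel} applies and $(Q_n(\cdot,\cdot;\nu),n\geq1)$ is consistent, i.e.\ for $C\subset B$ with associated projection $\varphi^*$ we have $Q_C(\cdot,\cdot;\nu)=Q_B(\cdot,\cdot;\nu)\varphi^{*-1}$.

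It then remains to transfer consistency from the stochastic matrices $Q_n$ to the generators $R_n^{\nu}$. By \eqref{eq:rates}, for $t\neq t'$ we have $r_n(t,t')=\lambda Q_n(t,t';\nu)$ and the diagonal entries are fixed so that rows sum to zero. Fixing $\emptyset\neq C\subset B\subseteq[n]$, $t\in\treesk_C$, $t^*\in D^{-1}_{C,B}(t)$ and $t'\in\treesk_C$, I would compute $\sum_{t''\in D^{-1}_{C,B}(t')}r_{\#B}(t^*,t'';\nu)$ by splitting off the term $t''=t^*$ (if $t^*\in D^{-1}_{C,B}(t')$, which happens precisely when $t=t'$): the off-diagonal contributions give $\lambda\sum_{t''\in D^{-1}_{C,B}(t'),\,t''\neq t^*}Q_{\#B}(t^*,t'';\nu)=\lambda\left(Q_{\#C}(t,t';\nu)-\mathbf{1}\{t=t'\}Q_{\#B}(t^*,t^*;\nu)\right)$ using the consistency of $Q$, and when $t=t'$ the diagonal term $r_{\#B}(t^*,t^*;\nu)=-\lambda(1-Q_{\#B}(t^*,t^*;\nu))$ exactly cancels the subtracted piece and reconstitutes $r_{\#C}(t,t;\nu)=-\lambda(1-Q_{\#C}(t,t;\nu))$. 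Either way one obtains $\sum_{t''\in D^{-1}_{C,B}(t')}r_{\#B}(t^*,t'';\nu)=r_{\#C}(t,t';\nu)$, which is the defining relation for consistency of the $Q$-matrices. The only mild subtlety — the main thing to get right — is this bookkeeping at the diagonal, making sure the hold-rate normalization is compatible across levels; since both $1-Q_{\#B}(t^*,t^*;\nu)$ and $1-Q_{\#C}(t,t;\nu)$ are the respective ``escape probabilities'' and consistency of $Q$ already matches them, this goes through cleanly.
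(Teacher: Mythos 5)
Your proposal is correct and follows the route the paper intends: the paper offers no written proof, merely asserting that the corollary follows from Theorems \ref{thm:exchangeable branching kernel} and \ref{thm:consistent branching kernel} together with the consistency of the $\cp(\nu)$ partition kernels from \cite{Crane2011a}, which is exactly your first two layers. Your third layer --- the diagonal bookkeeping when passing from $Q_n$ to $R_n^{\nu}$ --- is a detail the paper omits entirely, and you handle it correctly under the standard generator convention $r_n(T,T)=-\lambda(1-Q_n(T,T))$; note only that the paper's displayed formula \eqref{eq:rates} literally sets the diagonal to $0$, under which the matrices would \emph{not} be consistent in the case $t=t'$, so your charitable reading is the one that makes the statement true.
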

%
The existence of a continuous-time process with embedded jump chain governed by \eqref{eq:fidi tps trees} is now clear by the corollary \ref{cor:consistent Q-matrix} and the discussion at the end of section \ref{section:infinitely exchangeable processes}.
\begin{thm}\label{thm:existence continuous-time}There exists a continuous-time Markov process $(T(t),t\geq0)$ on $\treesk$ governed by $Q^{\nu}$ such that $$Q_n^{\nu}(T,T')=Q^{\nu}(T^{\infty},\{T''\in\treesk:T''_{|[n]}=T'\}),$$ for each $T^{\infty}\in\{T^*\in\treesk:T^*_{|[n]}=T\}$.\end{thm}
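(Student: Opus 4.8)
The plan is to construct the continuous-time process on $\treesk$ by appealing to the Kolmogorov extension theorem applied to the consistent system of finite-state Markov processes $(T_{|[n]}(t), t\ge0)$ on $\treesnk$, and then to verify that the resulting measure on path space has the stated marginal property. First I would invoke Corollary \ref{cor:consistent Q-matrix}, which guarantees that the finite-dimensional $Q$-matrices $(R_n^{\nu}, n\ge1)$ based on the rates \eqref{eq:rates} are consistent in the sense of \eqref{eq:consistent Markov kernel}. For each fixed $n$, since $\treesnk$ is a finite set and $R_n^{\nu}$ is a genuine (conservative, finite) $Q$-matrix, standard theory produces a unique continuous-time Markov chain $(T_{|[n]}(t), t\ge0)$ with this generator; its transition semigroup is $P_n^{(t)} = \exp(tR_n^{\nu})$.

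Next I would check the compatibility of these semigroups under the deletion maps. Consistency of the $Q$-matrices passes to consistency of the semigroups: because the exponential of a consistent family of generators is again consistent (this is a direct computation using the series expansion of $\exp(tR_n^{\nu})$ together with $\sum_{t''\in D^{-1}_{m,n}(t')}R_n^{\nu}(t^*,t'') = R_m^{\nu}(t,t')$ for $t^*\in D^{-1}_{m,n}(t)$, pushed through each power), we get $P_m^{(t)} = P_n^{(t)}\varphi_{m,n}^{*-1}$ for every $m\le n$ and $t\ge0$. Equivalently, for any $T^{\infty}\in\treesk$ with $T^{\infty}_{|[n]} = T$, the pushforward under $T\mapsto T_{|[n]}$ of the law of $(T_{|[n']}(t))$ for $n'\ge n$ restricts correctly, and the initial distributions can be taken compatibly as well (e.g. one may start from the infinitely exchangeable equilibrium $\rho(\cdot)$ of Corollary \ref{cor:existence inf exch stationary}, or from any fixed consistent initial tree). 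This compatible system of finite-dimensional laws, indexed by finite time sets and finite label sets, satisfies the Kolmogorov consistency conditions, so the extension theorem \cite{Billingsley1995} yields a process $(T(t), t\ge0)$ on $\treesk$ whose restriction to $[n]$ is the Markov chain with generator $R_n^{\nu}$ for each $n$, i.e. a process governed by a well-defined kernel $Q^{\nu}$ on $\treesk$.

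Finally I would read off the displayed identity. By construction the transition kernel $Q^{\nu}(T^{\infty},\cdot)$ of the limiting process, when evaluated on the cylinder set $\{T''\in\treesk : T''_{|[n]} = T'\}$, equals the transition probability of the $n$-restricted chain from $T$ to $T'$, which is exactly $Q_n^{\nu}(T,T')$; moreover this value does not depend on the choice of $T^{\infty}\in\{T^*\in\treesk : T^*_{|[n]} = T\}$, precisely because of the consistency established above. This gives $Q_n^{\nu}(T,T') = Q^{\nu}(T^{\infty}, \{T''\in\treesk : T''_{|[n]} = T'\})$ as claimed.

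The main obstacle is not any single hard estimate but rather the bookkeeping needed to state the Kolmogorov extension cleanly on the path space $\treesk^{[0,\infty)}$: one must choose a consistent family of finite-dimensional distributions indexed jointly by finite subsets of $\mathbb{N}$ (the label truncation) and finite subsets of $[0,\infty)$ (the time sampling), verify the two compatibility conditions, and confirm that the extension's restrictions genuinely have the Feller/Markov structure with generator $R_n^{\nu}$ — the nontrivial input being that consistency of generators implies consistency of the matrix exponentials, which is where Corollary \ref{cor:consistent Q-matrix} does the real work. Everything else is routine once that lemma-level fact is in hand.
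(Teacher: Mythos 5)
Your proposal is correct and takes essentially the same route as the paper: invoke Corollary \ref{cor:consistent Q-matrix} for consistency (and exchangeability) of the finite-dimensional rate matrices, then apply Kolmogorov's extension theorem to obtain the process on $\treesk$ whose restrictions have the prescribed generators, from which the displayed marginal identity is immediate. You in fact spell out the step the paper leaves implicit --- that consistency of the generators passes to the semigroups $\exp(tR_n^{\nu})$ via the lumpability computation on each power --- while the paper's proof additionally notes that the uniform bound $\lambda$ on the jump rates yields c\`adl\`ag paths.
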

\begin{proof}
Corollary \ref{cor:consistent Q-matrix} establishes that the finite-dimensional infinitesimal jump rates $(r_n,n\geq1)$ is finitely exchangeable and consistent.  Kolmogorov's extension theorem implies the existence of $R$ with finite-dimensional restrictions given by $(r_n,n\geq1)$.  Furthermore, for each $n\geq1$ and $T\in\treesn$, $1-r_n(T,T)=\lambda(1-Q_n(T,T))<\lambda<\infty$ so that the finite-dimensional paths are c\`adl\`ag for each $n$, which implies the paths of $(T(t),t\geq0)$ governed by $R$ are c\`adl\`ag.
\end{proof}
The transition rates above are defined in terms of a collection of infinitely exchangeable transition probabilities $(Q_n(\cdot,\cdot),n\geq1)$.  If $Q_n$ has unique equilibrium measure $\rho(\cdot)$, then so does its associated continuous-time process.  We have the following corollary for the stationary measure of the continuous-time process.
\begin{cor}Let $(T(t),t\geq0)$ be a continuous-time process governed by an infinitely exchangeable collection $(Q_n,n\geq1)$ of ancestral branching transition probabilities \eqref{eq:branching Markov kernel}.  Further suppose that for each $n\geq1$, $Q_n$ has unique equilibrium measure $\rho_n$ and the characteristic measure $Q$ on $\mathcal{T}$ has unique equilibrium measure $\rho$ as in proposition \ref{prop:inf exch stationary}.  Then $(T(t),t\geq0)$ has unique equilibrium measure $\rho$.\end{cor}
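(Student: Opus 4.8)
The plan is to verify directly that the measure $\rho$ supplied by Proposition~\ref{prop:inf exch stationary} (together with Kolmogorov's extension theorem) is stationary for the continuous-time dynamics by checking the balance equations at every finite level $n$, and then to upgrade this to a statement about the $\mathcal{T}$-valued process itself using consistency and the uniqueness already available at finite levels.

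First I would fix $n$ and work with the generator $R_n = (r_n(T,T'))_{T,T'\in\treesn}$ attached to \eqref{eq:rates}: its off-diagonal entries are $r_n(T,T') = \lambda Q_n(T,T')$ for $T\neq T'$ and its diagonal entries are $r_n(T,T) = -\lambda\bigl(1 - Q_n(T,T)\bigr)$, so every row sums to zero. A probability measure $\mu$ on $\treesn$ is stationary for the associated continuous-time chain if and only if $\sum_{T\in\treesn}\mu(T)\,r_n(T,T') = 0$ for all $T'$. Substituting $\mu = \rho_n$ and using the discrete-time stationarity $\sum_T \rho_n(T)Q_n(T,T') = \rho_n(T')$ from the hypothesis, a one-line computation gives
\[
\sum_{T}\rho_n(T)\,r_n(T,T') \;=\; \lambda\!\left(\sum_{T}\rho_n(T)Q_n(T,T') - \rho_n(T')\right) \;=\; 0,
\]
so $\rho_n$ is stationary for $R_n$. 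Since $Q_n$ is irreducible on the finite set $\treesn$, $R_n$ is an irreducible conservative $Q$-matrix on a finite state space, and hence $\rho_n$ is its \emph{unique} stationary distribution.

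Next I would pass to $\mathcal{T}$. By Corollary~\ref{cor:consistent Q-matrix} the family $(R_n,n\ge1)$ is consistent, and Theorem~\ref{thm:existence continuous-time} furnishes the process $(T(t),t\ge0)$ governed by $Q$ whose restriction to $[n]$ has generator $R_n$. Let $\rho$ be the measure obtained from $(\rho_n,n\ge1)$ via Proposition~\ref{prop:inf exch stationary} and the extension theorem, so that $\rho_n(T_n) = \rho(\{T : T_{|[n]} = T_n\})$ for all $n$. If the process is started from $\rho$, then for each $n$ the law of $T_{|[n]}(t)$ is $\rho_n$, which is $R_n$-stationary by the preceding paragraph; as the cylinder events $\{T : T_{|[n]} = T_n\}$ generate the Borel $\sigma$-algebra on $\mathcal{T}$, this shows $\rho$ is stationary for $(T(t),t\ge0)$. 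For uniqueness, suppose $\mu$ is any stationary measure for $(T(t),t\ge0)$; its projection $\mu_n := \mu\varphi_{n}^{*-1}$ onto $\treesn$ is stationary for $R_n$, because consistency makes the projection $\mathcal{T}\to\treesn$ intertwine the two generators, and hence $\mu_n = \rho_n$ by the finite-level uniqueness. Since a measure on $\mathcal{T}$ is determined by its finite-dimensional projections, $\mu = \rho$.

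The main obstacle I anticipate is not the finite-level balance computation, which is routine, but the bookkeeping needed to transfer stationarity and uniqueness between the infinite-dimensional process and its projections: one must confirm that the projection $\mathcal{T}\to\treesn$ genuinely intertwines the generators (this is precisely the consistency of Corollary~\ref{cor:consistent Q-matrix}) and that a stationary law for $(T(t),t\ge0)$ projects to a stationary law for each restricted chain, after which finite-state Markov chain theory and the uniqueness clause of Kolmogorov's extension theorem close the argument.
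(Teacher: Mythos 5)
Your argument is correct and fills in exactly the routine verification the paper leaves implicit: the corollary is stated there without proof, resting on the same observation that the balance identity $\sum_T\mu(T)\,r_n(T,T')=\lambda\bigl(\sum_T\mu(T)Q_n(T,T')-\mu(T')\bigr)$ makes the stationary measures of $R_n$ and $Q_n$ coincide at each finite level, after which consistency of the rates and the Kolmogorov extension argument of Proposition \ref{prop:inf exch stationary} transfer both existence and uniqueness to $\mathcal{T}$. One small remark: you do not need irreducibility of $Q_n$ (which is not among the hypotheses of this general corollary) to conclude uniqueness at level $n$ --- your displayed identity is an equivalence, so the unique $Q_n$-stationary measure $\rho_n$ is automatically the unique $R_n$-stationary measure.
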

For our purposes, we now restrict our attention to the $\cp(\nu)$ subfamily of ancestral branching processes on $\treesk$ with $\nu$ some measure on $\masspartitionk$ for some $k\geq1$.  We index transition measures and stationary measures by $\nu$ to make this explicit.  As we show, the $\cp(\nu)$ associated ancestral branching process is a Feller process and has an associated mass fragmentation process.
\subsection{Poissonian construction}\label{section:poissonian construction}
A consequence of the above continuous-time embedding and the alternative specification of the cut-and-paste ancestral branching algorithm given in section \ref{section:alternative construction} is yet another alternative construction via a Poisson point process.

Let $P=\{(t,B^u:u\in\mathcal{U})\}\subset\mathbb{R}^+\times\prod_{u\in\mathcal{U}}\left[\prod_{j=1}^{k}\partitionsk\right]$ be a Poisson point process with intensity measure $dt\otimes\lambda\bigotimes_{u\in\mathcal{U}}\varrho_{\nu}^{(k)}$, where $\varrho_{\nu}^{(k)}$ is the product measure $\varrho_{\nu}\otimes\cdots\otimes\varrho_{\nu}$ on $\prod_{j=1}^k\partitionsk$.  So for each $(t,B^u)\in P$, $B^u:=(B^u_1,\ldots,B^u_k)\in\prod_{j=1}^k\partitionsk$ is distributed as $\varrho_{\nu}^{(k)}$ and is labeled according to the genealogical index system of section \ref{section:alternative construction}.  

Construct a continuous time $\cp(\nu)$-ancestral branching Markov process as follows.  Let $\tau\in\treesk$ be an infinitely exchangeable random fragmentation tree.  For each $n\geq1$, put $T_{|[n]}(0)=\tau_{|[n]}$ and for $t>0$
\begin{itemize}
	\item if $t$ is not an atom time for $P$, then $T_{|[n]}(t)=T_{|[n]}(t-)$;
	\item if $t$ is an atom time for $P$ so that $(t,B^u:u\in\mathcal{U})\in P$, generate $\sigma:=(\sigma^u:u\in\mathcal{U})\in\prod_{u\in\mathcal{U}}\left[\prod_{j=1}^k\mathcal{S}_k\right]$, an i.i.d.\ collection of $k$-tuples of uniform permutations of $[k]$.  Put $T:=T(t-)$ and $T'$ equal to the tree constructed from $T$, $\{B^u:u\in\mathcal{U}\}$ and $\sigma$ through the function $\cp(\cdot,\cdot,\cdot)$ which is described in section \ref{section:alternative construction}.  If $T'_{|[n]}\neq T_{|[n]}$, put $T_{|[n]}(t)=T'_{|[n]}$; otherwise, put $T_{|[n]}(t)=T_{|[n]}(t-)$.
\end{itemize}
\begin{prop}\label{prop:poissonian rates}The above process $T$ is a Markov process on $\treesk$ with transition matrix $Q^{\nu}$ defined by theorem \ref{thm:existence continuous-time}.\end{prop}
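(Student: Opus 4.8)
\emph{Proof strategy.} The plan is to push everything down to the finite level: for each fixed $n\geq1$ I would show that the restricted process $T_{|[n]}=(T_{|[n]}(t),t\geq 0)$ is a continuous-time Markov chain on $\treesnk$ with the infinitesimal transition rates $r_n(\cdot,\cdot)$ of \eqref{eq:rates}, and then invoke the consistency of the rate family recorded in Corollary \ref{cor:consistent Q-matrix} together with Theorem \ref{thm:existence continuous-time} to identify $T$ as the process on $\treesk$ governed by $Q^\nu$. The delicate point, which I flag at the outset, is that this reduction is only legitimate if $T_{|[n]}$ is Markov in its own filtration, and that in turn hinges on the genealogical branching procedure of Section \ref{section:alternative construction} commuting with restriction to $[n]$.

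\emph{The clock.} Fix $n$. Since $\bigotimes_{u\in\mathcal{U}}\varrho_\nu^{(k)}$ is a probability measure, projecting $P$ onto its time coordinate gives a rate-$\lambda$ homogeneous Poisson process on $\mathbb{R}^+$; the mark families $(B^u:u\in\mathcal{U})$ attached to distinct atom times are i.i.d.\ $\bigotimes_{u}\varrho_\nu^{(k)}$ and independent of the atom times, and the permutation families $\sigma$ drawn at successive atom times are fresh i.i.d.\ copies, independent of everything else. Between consecutive atom times $T_{|[n]}$ is constant, so its holding periods are i.i.d.\ $\Exp(\lambda)$ and independent of the jump data.

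\emph{The jump.} At an atom time $t$, conditionally on $T(t-)$, the tree $T'$ is built from $T(t-)$, the mark $(B^u)$ and the permutations $\sigma$ by precisely the genealogical branching procedure of Section \ref{section:alternative construction} (the mark supplies exactly the i.i.d.\ $\varrho_\nu^{(k)}$ families indexed by $\mathcal{U}$ that that procedure calls for). I would then record two facts. First, because $\cp(\cdot,\cdot,\cdot)$ commutes with restriction to $[n]$ --- the same projective consistency underlying the $\cp(\nu)$ process of \cite{Crane2011a} --- the restricted tree $T'_{|[n]}$ is a deterministic function of $T(t-)_{|[n]}$ and of the $[n]$-coordinates of $(B^u)$ and $\sigma$ alone, and does not see the rest of $T(t-)$. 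Second, by Proposition \ref{prop:fidi tree tps} the conditional law of $T'_{|[n]}$ given $T(t-)_{|[n]}=\tau$ is $Q_n(\tau,\cdot;\nu)$ of \eqref{eq:fidi tps trees}. Since $T_{|[n]}(t)=T'_{|[n]}$ under both branches of the ``if/otherwise'' rule (the ``otherwise'' branch being exactly the case $T'_{|[n]}=\tau$), it follows that, conditionally on the past through $T_{|[n]}(t-)=\tau$, the state $T_{|[n]}(t)$ has law $Q_n(\tau,\cdot;\nu)$ and is independent of the pre-$t$ history.

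\emph{Assembly.} Combining clock and jump, $T_{|[n]}$ alternates independent $\Exp(\lambda)$ holds with re-samplings from $Q_n(\cdot,\cdot;\nu)$ that may leave the state fixed; by the standard uniformized description this is the continuous-time Markov chain on $\treesnk$ with off-diagonal rates $\lambda Q_n(\tau,\tau';\nu)$ for $\tau'\neq\tau$, i.e.\ the rates $r_n$ of \eqref{eq:rates}, and since the effective exit rate $\lambda(1-Q_n(\tau,\tau;\nu))$ is at most $\lambda$ the chain does not explode and is c\`adl\`ag --- which is Definition \ref{defn:process}. For every $t$ the family $(T_{|[n]}(t),n\geq1)$ is compatible (it inherits compatibility from $T'\in\treesk$ at atom times, and compatibility persists between them), so the $T_{|[n]}$ are the finite-dimensional restrictions of a genuine $\treesk$-valued process $T$; and by Corollary \ref{cor:consistent Q-matrix} the family $(r_n,n\geq1)$ is consistent, so the finite-dimensional laws of $T$ --- pinned down by the $r_n$ and by the infinitely exchangeable initial law of $\tau$ --- coincide with those of the process of Theorem \ref{thm:existence continuous-time}, whence $T$ is governed by $Q^\nu$. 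The main obstacle is the restriction-commutation property of the genealogical branching procedure isolated in the third paragraph; once it is in hand, the rest is bookkeeping with properties of Poisson point processes and the already-established form of $Q_n$.
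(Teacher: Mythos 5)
Your proof is correct and follows essentially the same route as the paper's: the paper's own argument simply asserts that each restriction $T_{|[n]}$ evolves according to $r_n^{\nu}$, that the restrictions are compatible, and that the claim then follows from consistency of $Q_n^{\nu}$. You have merely filled in the details of that assertion (the $\Exp(\lambda)$ clock, the identification of the jump law with $Q_n(\cdot,\cdot;\nu)$ via Proposition \ref{prop:fidi tree tps}, and the commutation of the cut-and-paste map with restriction), so no further comparison is needed.
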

\begin{proof}
By the above construction, for every $n\geq1$ and $t>0$, $T_{|[n]}(t)$ evolves according to $r_n^{\nu}$ in \eqref{eq:rates}, $D_{m,n}T_{|[n]}(t)=T_{|[m]}(t)$ for all $m\leq n$, and $T_{|[p]}(t)\in D_{n,p}^{-1}(T_{|[n]}(t))$ for all $p>n$.  Hence, the restriction $T_{|[n]}$ is a $Q_n^{\nu}$-governed Markov process for each $n\geq1$ and the result is clear by consistency of $Q_n^{\nu}$.\end{proof}
By ignoring the arrival times in the above Poissonian construction and looking only at the embedded jump chain, we obtain a discrete-time process which evolves according to the $\cp(\nu)$-ancestral branching algorithm of section \ref{section:ancestral branching algorithm}.
\subsection{Feller process}\label{section:Feller process}
In \cite{Crane2011a} we show that the cut-and-paste process with finite-dimensional Markovian jump rates corresponding to the transition probabilities in \eqref{eq:fidi tps} is a Feller process.  Indeed, we now show that the ancestral branching Markov process on $\mathcal{T}$ which is induced by the $\cp(\nu)$ Markov process is also Fellerian, but we first need some preliminaries.

Define the metric $d:\mathcal{T}\times\mathcal{T}\rightarrow\mathbb{R}^+$ by
\begin{equation}d(T,T'):=1/\max\{n\in\mathbb{N}:T_{|[n]}=T'_{|[n]}\},\label{eq:tree metric}\end{equation}
for every $T,T'\in\mathcal{T}$, with the convention that $1/\infty=0$.
\begin{prop} $d$ is a metric on $\mathcal{T}$.\end{prop}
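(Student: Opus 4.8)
The plan is to verify the three defining properties of a metric for $d(T,T'):=1/\max\{n\in\mathbb{N}:T_{|[n]}=T'_{|[n]}\}$ directly, exploiting the fact that $d$ is essentially the standard ultrametric associated to the projective system $(\treesn,n\geq1)$ with the deletion (restriction) maps. The key observation underlying everything is that the set $\{n\in\mathbb{N}:T_{|[n]}=T'_{|[n]}\}$ is an initial segment of $\mathbb{N}$: if $T_{|[n]}=T'_{|[n]}$ and $m\le n$, then restricting both sides to $[m]$ via $D_{m,n}$ gives $T_{|[m]}=T'_{|[m]}$, since $(T_{|[n]})_{|[m]}=T_{|[m]}$ by definition of the restriction of a tree. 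Hence the set is either all of $\mathbb{N}$, in which case $\max$ is interpreted as $\infty$ and $d(T,T')=0$, or it is $\{1,\ldots,N\}$ for some finite $N\ge 1$ (note $1\in$ the set always, since $T_{|[1]}=\{\{1\}\}=T'_{|[1]}$ for any two fragmentation trees), so the $\max$ is well-defined and $d(T,T')=1/N\in(0,1]$. This makes $d$ well-defined and non-negative.

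Next I would check the two substantive axioms. For identity of indiscernibles: $d(T,T')=0$ iff $\{n:T_{|[n]}=T'_{|[n]}\}=\mathbb{N}$ iff $T_{|[n]}=T'_{|[n]}$ for all $n$; but any $T\in\mathcal{T}$ is the compatible sequence $(T_{|[n]},n\ge1)$ of its finite restrictions, as noted in Section~\ref{section:preliminaries}, so $T_{|[n]}=T'_{|[n]}$ for all $n$ forces $T=T'$. Symmetry is immediate since the condition $T_{|[n]}=T'_{|[n]}$ is symmetric in $T$ and $T'$. For the triangle inequality, let $T,T',T''\in\mathcal{T}$ and set $N=\max\{n:T_{|[n]}=T'_{|[n]}\}$ and $M=\max\{n:T'_{|[n]}=T''_{|[n]}\}$ (each possibly $\infty$). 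If either is $\infty$ the corresponding distance is $0$ and the inequality reduces to $d(T,T'')\le d$ of the other pair, which holds because $T$ agrees with $T'$ (resp.\ $T'$ with $T''$) on all of $[n]$ forces $d(T,T'')=d(T',T'')$ (resp.\ $=d(T,T')$). In the finite case, let $L=\min(N,M)$; then $T_{|[L]}=T'_{|[L]}$ and $T'_{|[L]}=T''_{|[L]}$ (using that the agreement sets are initial segments), hence $T_{|[L]}=T''_{|[L]}$, so $\max\{n:T_{|[n]}=T''_{|[n]}\}\ge L$ and therefore $d(T,T'')\le 1/L=\max(1/N,1/M)=\max(d(T,T'),d(T',T''))\le d(T,T')+d(T',T'')$. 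This in fact establishes the stronger ultrametric inequality.

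The only mildly delicate point — and the one I'd expect to be the main obstacle, though it is more bookkeeping than difficulty — is handling the infinite case cleanly and making sure the convention $1/\infty=0$ is applied consistently, together with the elementary lemma that the agreement set is an initial segment. Once that lemma is in hand, all three axioms fall out in a couple of lines each, so I would state the lemma first and then dispatch the axioms in the order: well-definedness and non-negativity, symmetry, identity of indiscernibles, triangle (ultrametric) inequality.

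\begin{proof}
For $T,T'\in\mathcal{T}$ let $N(T,T'):=\{n\in\mathbb{N}:T_{|[n]}=T'_{|[n]}\}$, so that $d(T,T')=1/\sup N(T,T')$ with the convention $1/\infty=0$.

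\emph{The set $N(T,T')$ is an initial segment of $\mathbb{N}$.} For any two fragmentation trees, $T_{|[1]}=\{\{1\}\}=T'_{|[1]}$, so $1\in N(T,T')$ and $N(T,T')\neq\emptyset$. If $n\in N(T,T')$ and $m\le n$, then applying $D_{m,n}$ to both sides of $T_{|[n]}=T'_{|[n]}$ and using $(T_{|[n]})_{|[m]}=T_{|[m]}$ gives $T_{|[m]}=T'_{|[m]}$, i.e.\ $m\in N(T,T')$. Hence $N(T,T')$ is either all of $\mathbb{N}$ or equals $\{1,\ldots,N\}$ for a unique finite $N\ge1$; in the first case $\sup N(T,T')=\infty$ and $d(T,T')=0$, in the second $\sup N(T,T')=N$ and $d(T,T')=1/N\in(0,1]$. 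In particular $d$ is well-defined and $d(T,T')\ge0$.

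\emph{Symmetry.} The condition $T_{|[n]}=T'_{|[n]}$ is symmetric in $T$ and $T'$, so $N(T,T')=N(T',T)$ and $d(T,T')=d(T',T)$.

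\emph{Identity of indiscernibles.} If $T=T'$ then $N(T,T')=\mathbb{N}$ and $d(T,T')=0$. Conversely, if $d(T,T')=0$ then $N(T,T')=\mathbb{N}$, i.e.\ $T_{|[n]}=T'_{|[n]}$ for every $n\ge1$; since every $T\in\mathcal{T}$ is recovered from its compatible sequence $(T_{|[n]},n\ge1)$ of finite restrictions, it follows that $T=T'$.

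\emph{Triangle inequality.} Let $T,T',T''\in\mathcal{T}$ and set $L:=\min\bigl(\sup N(T,T'),\sup N(T',T'')\bigr)\in\mathbb{N}\cup\{\infty\}$. If $L=\infty$ then $T_{|[n]}=T'_{|[n]}=T''_{|[n]}$ for all $n$, so $T=T'=T''$ and the inequality is trivial. If $L<\infty$, then since $N(T,T')$ and $N(T',T'')$ are initial segments both containing $L$, we have $T_{|[L]}=T'_{|[L]}$ and $T'_{|[L]}=T''_{|[L]}$, hence $T_{|[L]}=T''_{|[L]}$, so $\sup N(T,T'')\ge L$ and therefore
\begin{equation*}
d(T,T'')\le \frac1L=\max\bigl(d(T,T'),d(T',T'')\bigr)\le d(T,T')+d(T',T'').
\end{equation*}
Thus $d$ satisfies all the axioms of a metric (indeed an ultrametric) on $\mathcal{T}$.
\end{proof}
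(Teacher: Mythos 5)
Your proof is correct and follows essentially the same route as the paper's: a direct verification of the axioms, with the triangle inequality obtained from the observation that agreement of restrictions propagates downward, which is exactly the case analysis the paper performs (and which, as you note, really yields the ultrametric inequality). You are somewhat more complete than the paper — you spell out the initial-segment lemma, the identity of indiscernibles via the compatible-sequence representation, and the $1/\infty=0$ conventions, all of which the paper dismisses as obvious — but this is a refinement of the same argument rather than a different one.
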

\begin{proof}
Positivity and symmetry are obvious.  To see that the triangle inequality holds, let $T,T',T''\in\mathcal{T}$ so that $d(T,T')=1/a$ for some $a\geq1$.  Now suppose that $d(T,T'')=1/b\geq1/a$.  Then the triangle inequality is trivially satisfied.  If $d(T,T'')=1/b<1/a$ then $T_{|[b]}=T''_{|[b]}$ for $b>a$ and $T_{|[a]}=T'_{|[a]}$ but $T_{|[a+1]}\neq T'_{|[a+1]}$ by assumption.  Hence, $d(T',T'')=1/a$ and the triangle inequality holds.\end{proof}
\begin{prop}\label{prop:compact space}$(\mathcal{T},d)$ is a compact space.\end{prop}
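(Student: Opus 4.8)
The plan is to show that $(\mathcal{T},d)$ is compact by establishing that it is complete and totally bounded, or equivalently (and more in the spirit of the projective-limit structure already set up in the paper) by identifying $\mathcal{T}$ with a closed subset of the compact product space $\prod_{n\geq1}\mathcal{T}_n$, where each $\mathcal{T}_n$ is a finite set carrying the discrete topology. First I would observe that the metric $d$ in \eqref{eq:tree metric} is exactly the metric induced on $\mathcal{T}$ by the embedding $T\mapsto(T_{|[n]},n\geq1)$ into $\prod_{n\geq1}\mathcal{T}_n$, where the product is given the usual product (cylinder) topology; indeed $d(T,T')$ is small precisely when $T$ and $T'$ agree on a long initial segment $[n]$, which is the defining feature of convergence in the product topology. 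Since each $\mathcal{T}_n$ is finite, hence compact, Tychonoff's theorem gives that $\prod_{n\geq1}\mathcal{T}_n$ is compact, and it is metrizable (the product of countably many finite discrete spaces), so sequential compactness suffices throughout.

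The key step is then to show that the image of $\mathcal{T}$ under this embedding is a \emph{closed} subset of $\prod_{n\geq1}\mathcal{T}_n$, since a closed subset of a compact space is compact. The image consists exactly of the compatible sequences $(T_n,n\geq1)$ with $T_n\in\mathcal{T}_n$ and $D_{n,n+1}T_{n+1}=T_n$ for all $n$ — this is the content of the remark, recalled at the end of Section~1, that every $T\in\mathcal{T}$ is represented by $(T_{|[n]},n\geq1)$ and conversely every compatible sequence arises this way (one reconstructs $T=\bigcup_n\{t\subset[n]: t\in T_n,\ t\cup([m]\setminus[n])\ \text{stable as }m\to\infty\}$, or more simply notes that compatibility lets one take the "limit" of the nested trees). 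Each compatibility constraint $\{(T_n): D_{n,n+1}T_{n+1}=T_n\}$ is a finite intersection-type condition depending on only finitely many coordinates, hence defines a clopen — in particular closed — set; the image is the intersection over $n$ of these closed sets, hence closed. Therefore the image is compact, and since the embedding is an isometry onto its image, $(\mathcal{T},d)$ is compact.

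Alternatively, if one prefers a direct sequential argument: given a sequence $(T^{(j)},j\geq1)$ in $\mathcal{T}$, use a diagonal extraction — since $\mathcal{T}_1$ is finite, infinitely many $T^{(j)}$ share the same restriction to $[1]$; passing to that subsequence and repeating for $[2]$, $[3]$, $\ldots$, and diagonalizing, one obtains a subsequence $(T^{(j_l)})$ such that for each $n$ the restrictions $T^{(j_l)}_{|[n]}$ are eventually constant, equal to some $\widetilde T_n\in\mathcal{T}_n$. The sequence $(\widetilde T_n,n\geq1)$ is automatically compatible (apply $D_{n,n+1}$ to the eventual equality $T^{(j_l)}_{|[n+1]}=\widetilde T_{n+1}$), hence defines some $\widetilde T\in\mathcal{T}$, and $d(T^{(j_l)},\widetilde T)\to0$ by construction. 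This shows every sequence has a convergent subsequence, i.e.\ $(\mathcal{T},d)$ is sequentially compact, hence compact since it is a metric space.

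The main obstacle — really the only point needing care rather than bookkeeping — is verifying that a compatible sequence $(\widetilde T_n)$ genuinely corresponds to a fragmentation tree of $\mathbb{N}$, i.e.\ that the limiting object satisfies the defining axioms (i)–(ii) of a rooted tree and contains all singletons. One checks the laminarity condition (ii) by noting that if $s,t$ are elements of the limit tree, they appear already as elements of $\widetilde T_n=T^{(j_l)}_{|[n]}$ for $n$ large enough to contain $s\cup t$, where laminarity holds because $T^{(j_l)}\in\mathcal{T}$; the singleton and root conditions transfer similarly. This is exactly the projective-limit identification recalled in Section~1, so it may simply be cited, but for a self-contained proof it is the one place where the structure of $\mathcal{T}$ (as opposed to an arbitrary projective system of finite sets) is used.
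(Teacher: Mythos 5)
Your proposal is correct, and your second (sequential, diagonal-extraction) argument is essentially the proof the paper gives: the paper's version simply notes that each $\mathcal{T}_n$ is finite and extracts a subsequence with $d(T^{(i)},T^{(j)})\leq 1/\min\{i,j\}$ by diagonalization, leaving everything else implicit. Your first route --- identifying $(\mathcal{T},d)$ isometrically with the set of compatible sequences inside the compact metrizable product $\prod_n \mathcal{T}_n$ and checking that the compatibility constraints cut out a closed (indeed clopen, coordinate-finite) subset --- is a clean repackaging of the same idea via Tychonoff; it buys you a topological statement that does not require constructing limits by hand, at the cost of having to verify the embedding is a surjection onto the compatible sequences. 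The one point where you go genuinely beyond the paper is the verification that a compatible limit sequence $(\widetilde T_n)$ actually assembles into an element of $\mathcal{T}$ (laminarity, root, and singleton conditions passing to the limit); the paper's one-line proof tacitly assumes this via the projective-system identification of Section~1, and your observation that each axiom can be checked on a finite restriction containing the relevant elements is exactly the missing justification. So: correct, same core mechanism as the paper, with the paper's implicit step made explicit.
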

\begin{proof}
Let $(T^1,T^2,\ldots)$ be a sequence in $\mathcal{T}$.  Any element $T\in\mathcal{T}$ can be written as a compatible sequence of finite-dimensional restrictions, $T:=(T_{|[1]},T_{|[2]},\ldots):=(T_1,T_2,\ldots)$.  The set $\treesn$ is finite for each $n$, and so one can extract a convergent subsequence $(T^{(1)},T^{(2)},\ldots)$ of $(T^1,T^2,\ldots)$ by the diagonal procedure such that $d(T^{(i)},T^{(j)})\leq1/\min\{i,j\}$ for all $i,j$.
\end{proof}
\begin{lemma}\label{lemma:dense set}$C_f:=\{f:\mathcal{T}\rightarrow\mathbb{R}:\exists n\in\mathbb{N}\mbox{ s.t. }d(T,T')\leq1/n\Rightarrow f(T)=f(T')\}$ is dense in the space of continuous functions $\mathcal{T}\rightarrow\mathbb{R}$ under the metric $\rho(f,f'):=\sup_{\tau\in\mathcal{T}}|f(\tau)-f'(\tau)|$.\end{lemma}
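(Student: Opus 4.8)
The plan is to exploit the compactness of $(\mathcal{T},d)$ established in Proposition~\ref{prop:compact space}: every $f\in C(\mathcal{T})$ is then automatically uniformly continuous, and I will approximate such an $f$ by a function that depends on its argument only through the restriction to $[n]$ for $n$ taken large enough. Such ``cylinder'' functions lie in $C_f$ because, by compatibility of the restriction maps, $d(T,T')\le 1/n$ holds if and only if $T_{|[n]}=T'_{|[n]}$; indeed the set $\{m:T_{|[m]}=T'_{|[m]}\}$ is downward closed under the deletion maps, so its maximum is at least $n$ precisely when $T_{|[n]}=T'_{|[n]}$.

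Concretely, fix $f\in C(\mathcal{T})$ and $\varepsilon>0$. By uniform continuity there is $\delta>0$ with $|f(T)-f(T')|<\varepsilon$ whenever $d(T,T')<\delta$; choose $n\in\mathbb{N}$ with $1/n<\delta$. The equivalence relation $T\sim_n T'\iff T_{|[n]}=T'_{|[n]}$ has only finitely many classes, since $\treesn$ is finite; for each $B\in\treesn$ that occurs, pick a representative $R(B)\in\{T\in\mathcal{T}:T_{|[n]}=B\}$, and define $f_n(T):=f\big(R(T_{|[n]})\big)$. Then $f_n$ is constant on each $\sim_n$-class, so $d(T,T')\le 1/n$ forces $f_n(T)=f_n(T')$, i.e.\ $f_n\in C_f$ with witness $n$. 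Moreover $T$ and $R(T_{|[n]})$ agree on $[n]$, hence $d\big(T,R(T_{|[n]})\big)\le 1/n<\delta$, so $|f(T)-f_n(T)|<\varepsilon$ for every $T\in\mathcal{T}$; therefore $\rho(f,f_n)\le\varepsilon$. Since $\varepsilon>0$ is arbitrary, $C_f$ is dense in $C(\mathcal{T})$.

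There is essentially no hard step; the only point needing care is the equivalence $d(T,T')\le 1/n\Leftrightarrow T_{|[n]}=T'_{|[n]}$, which rests on the compatibility $T_{|[m]}=D_{m,n}T_{|[n]}$ of the projective system of reduced subtrees together with the finiteness of $\treesn$, the latter guaranteeing that the cylinder decomposition used to build $f_n$ is finite so that $R(\cdot)$ is well defined. One may also remark, though it is not needed for the density statement, that $C_f\subseteq C(\mathcal{T})$: if $g\in C_f$ with witness $n$ and $T^j\to T$ in $(\mathcal{T},d)$, then $d(T^j,T)\le 1/n$ for all large $j$, whence $g(T^j)=g(T)$ eventually.
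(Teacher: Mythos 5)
Your proof is correct and follows essentially the same route as the paper's: obtain uniform continuity from compactness of $(\mathcal{T},d)$, partition $\mathcal{T}$ into the finitely many cylinder classes determined by the restriction to $[n]$, and replace $f$ by its value at a chosen representative of each class. Your explicit justification of the equivalence $d(T,T')\leq 1/n \Leftrightarrow T_{|[n]}=T'_{|[n]}$ via the projective system is a small addition the paper leaves implicit, but the argument is otherwise the same.
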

\begin{proof}
Let $\varphi:\mathcal{T}\rightarrow\mathbb{R}$ be a continuous function.  Then for every $\epsilon>0$ there exists $n(\epsilon)\in\mathbb{N}$ such that $\tau,\sigma\in\mathcal{T}$ satisfying $d(\tau,\sigma)\leq1/n(\epsilon)$ implies $|\varphi(\tau)-\varphi(\sigma)|\leq\epsilon$.

For fixed $\epsilon>0$, let $N=n(\epsilon)$ and define $f:\mathcal{T}\rightarrow\mathbb{R}$ as follows.  First, partition $\mathcal{T}$ into equivalence classes $\{\tau\in\mathcal{T}:\tau_{|[N]}=t_{|[N]}\}$ for each $t\in\mathcal{T}$.  For each equivalence class $U$, choose a representative element $\tilde{u}\in U$ and put $f(u):=\varphi(\tilde{u})$ for all $u\in U$.  For any $t\in\mathcal{T}$, let $\tilde{t}$ denote the representative of $t$ obtained in this way.  Hence, $f(t)=f(t')=f(\tilde{t})$ for all $t,t'$ such that $d(t,t')\leq 1/N$ and $f\in C_f$.  Thus, $$|f(\tau)-\varphi(\tau)|=|\varphi(\tilde{\tau})-\varphi(\tau)|\leq\epsilon$$ by continuity of $\varphi$ and $$\rho(f,\varphi)=\sup_{\tau}|f(\tau)-\varphi(\tau)|\leq\epsilon,$$ which establishes density.
\end{proof}
Let $\mathbb{P}_t$ be the semi-group of a $\varrho_{\nu}$-branching Markov process $T(\cdot)$, i.e.\ for any continuous $\varphi:\treesk\rightarrow\mathbb{R}$
$$\mathbb{P}_t\varphi(\tau):=\mathbb{E}_{\tau}\varphi(T(t)),$$
the expectation of $\varphi(T(t))$ given $T(0)=\tau.$
\begin{cor}\label{cor:feller}A $\cp(\nu)$-ancestral branching Markov process has the Feller property, i.e.
\begin{itemize}
	\item for each continuous function $\varphi:\treesk\rightarrow\mathbb{R}$, for each $\tau\in\mathcal{P}$ one has 
	$$\lim_{t\downarrow0}\mathbb{P}_t\varphi(\tau)=\varphi(\tau),$$
	\item for all $t>0$, $\tau\mapsto\mathbb{P}_{t}\varphi(\tau)$ is continuous.
\end{itemize}\end{cor}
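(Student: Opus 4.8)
The plan is to reduce both assertions to standard facts about finite continuous-time Markov chains, using the consistency of the jump rates (Corollary \ref{cor:consistent Q-matrix}) together with the density statement of Lemma \ref{lemma:dense set} — which applies verbatim on the closed, hence compact, subspace $\treesk$ of $(\mathcal{T},d)$, with $C_f$ replaced by its analogue $C_f^{(k)}$ of functions on $\treesk$ factoring through some finite restriction.

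First I would record the key reduction. Fix $n\geq1$ and suppose $\varphi:\treesk\to\mathbb{R}$ lies in $C_f^{(k)}$ and is determined by the restriction to $[n]$, i.e.\ $\varphi(T)=\varphi(T')$ whenever $T_{|[n]}=T'_{|[n]}$; write $\bar\varphi:\treesnk\to\mathbb{R}$ for the induced function. By Corollary \ref{cor:consistent Q-matrix} the collection $(r_n^{\nu},n\geq1)$ of $Q$-matrices is consistent, so the restricted process $T_{|[n]}(\cdot)$ is itself a Markov chain on the finite set $\treesnk$ with generator $r_n^{\nu}$; since $1-r_n^{\nu}(T,T)<\lambda<\infty$ (as in the proof of Theorem \ref{thm:existence continuous-time}) this chain is non-explosive and has an honest, norm-continuous transition semigroup $(P^{(n)}_t,t\geq0)$. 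Then $\mathbb{P}_t\varphi(\tau)=\mathbb{E}_{\tau}\varphi(T(t))=\mathbb{E}_{\tau_{|[n]}}\bar\varphi(T_{|[n]}(t))=(P^{(n)}_t\bar\varphi)(\tau_{|[n]})$, so $\mathbb{P}_t\varphi$ again depends only on the restriction to $[n]$; in particular $\mathbb{P}_t\varphi\in C_f^{(k)}$, hence is continuous, since every element of $C_f^{(k)}$ is locally constant on $(\treesk,d)$.

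For the first (temporal) bullet, this reduction gives, for $\varphi\in C_f^{(k)}$, $\lim_{t\downarrow0}\mathbb{P}_t\varphi(\tau)=\lim_{t\downarrow0}(P^{(n)}_t\bar\varphi)(\tau_{|[n]})=\bar\varphi(\tau_{|[n]})=\varphi(\tau)$, because the transition semigroup of a finite-state, bounded-rate Markov chain is continuous at $t=0$. For an arbitrary continuous $\varphi:\treesk\to\mathbb{R}$, pick $f\in C_f^{(k)}$ with $\rho(f,\varphi)<\epsilon$ by Lemma \ref{lemma:dense set}; since $\mathbb{P}_t$ is a sup-norm contraction, $|\mathbb{P}_t\varphi(\tau)-\varphi(\tau)|\leq|\mathbb{P}_t(\varphi-f)(\tau)|+|\mathbb{P}_tf(\tau)-f(\tau)|+|f(\tau)-\varphi(\tau)|\leq 2\epsilon+|\mathbb{P}_tf(\tau)-f(\tau)|$, whose $\limsup_{t\downarrow0}$ is at most $2\epsilon$; letting $\epsilon\downarrow0$ proves the claim. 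For the second (spatial) bullet, the reduction already shows $\mathbb{P}_t\varphi\in C_f^{(k)}$, hence continuous, for $\varphi\in C_f^{(k)}$; for general continuous $\varphi$ approximate again by $f\in C_f^{(k)}$ with $\rho(f,\varphi)<\epsilon$, so that $\rho(\mathbb{P}_tf,\mathbb{P}_t\varphi)\leq\rho(f,\varphi)<\epsilon$ by contractivity, exhibiting $\mathbb{P}_t\varphi$ as a uniform limit of continuous functions on the compact space $(\treesk,d)$, hence continuous.

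I do not expect a serious obstacle: the only point requiring care is the reduction step — verifying that the restricted process is genuinely an autonomous, non-explosive finite Markov chain with a bona fide semigroup — and this is precisely what consistency of the $Q$-matrices (Corollary \ref{cor:consistent Q-matrix}) together with the uniform bound on the total jump rates furnishes. Everything else is the standard density-plus-contraction argument.
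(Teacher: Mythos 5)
Your proof is correct, and your treatment of the first bullet is essentially the paper's (locally constant functions are fixed by $\mathbb{P}_t$ up to the first jump of the relevant finite restriction, then density of $C_f$ plus sup-norm contractivity). Where you genuinely diverge is the second bullet. The paper proves spatial continuity by a coupling argument: it runs two copies of the process from $\tau$ and $\tau'$ with $\tau_{|[n]}=\tau'_{|[n]}$ off the \emph{same} Poisson point process of Section \ref{section:poissonian construction}, observes that the restrictions to $[n]$ then agree for all time, and concludes $d(T(t),T'(t))<1/n$ almost surely — which immediately gives continuity of $\tau\mapsto\mathbb{P}_t\varphi(\tau)$ for every continuous $\varphi$, with the same modulus of continuity. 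You instead use only the consistency of the $Q$-matrices (Corollary \ref{cor:consistent Q-matrix}, or equivalently Definition \ref{defn:process}) to show that $\mathbb{P}_t$ maps the cylinder class $C_f^{(k)}$ into itself, and then pass to general $\varphi$ by uniform approximation and contractivity. Your route is softer but more portable: it needs no explicit coupling and would survive any construction of the process for which the finite restrictions are autonomous bounded-rate chains, whereas the paper's coupling buys a quantitative statement (the semigroup preserves the modulus of continuity in the metric $d$) that your argument does not directly yield. The one point you rightly flag — that $T_{|[n]}(\cdot)$ is an honest autonomous Markov chain on $\treesnk$ — is exactly what the paper's Definition \ref{defn:process} and Proposition \ref{prop:poissonian rates} supply, so there is no gap there; and your implicit extension of Lemma \ref{lemma:dense set} from $\mathcal{T}$ to the closed subspace $\treesk$ is harmless, as the paper itself uses the lemma in that setting without comment.
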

\begin{proof}
The proof follows the same line of reasoning as corollary 4.2 in \cite{Crane2011a}.  Let $\varphi$ be a continuous function $\treesk\rightarrow\mathbb{R}$.

For $g\in C_f$, $\lim_{t\downarrow0}\mathbb{P}_tg(\tau)=g(\tau)$ is clear since the first jump-time of $T(\cdot)$ is exponential with finite mean.  Denseness of $C_f$ establishes the first point.

For the second point, let $n\geq1$ and $\tau,\tau'\in\treesk$ such that $d(\tau,\tau')<1/n$, i.e.\ $\tau_{|[n]}=\tau'_{|[n]}$.  Use the same Poisson point process $P$, as in section \ref{section:poissonian construction}, to construct $T(\cdot)$ and $T'(\cdot)$ such that $T(0)=\tau$ and $T'(0)=\tau'$.  By construction, $T_{|[n]}=T'_{|[n]}$ and $d(T(t),T'(t))<1/n$ for all $t\geq0$.  Hence, for any continuous $\varphi$, $\tau\mapsto\mathbb{P}_t\varphi_{\tau}$ is continuous.
\end{proof}
By corollary \ref{cor:feller}, we can characterize the $\cp(\nu)$-ancestral branching Markov process $(T(t),t\geq0)$ with finite-dimensional rates $(q_n(\cdot,\cdot;\nu),n\geq1)$ by its infinitesimal generator $\mathcal{G}$ given by
\begin{equation}\mathcal{G}(f)(\tau)=\int_{\treesk}f(\tau')-f(\tau)Q^{\nu}(\tau,d\tau')\notag\end{equation}
for every $f\in C_f$.

Our proof of the Feller property for the $\cp(\nu)$-ancestral branching Markov process makes use of the Poissonian construction of the previous section.  In light of the specification of the cut-and-paste algorithm in section \ref{section:cut-and-paste algorithm}, it is straightforward to see that we can construct a generate cut-and-paste ancestral branching process via a Poisson point process by slight modification, and the various properties shown for the $\cp(\nu)$-ancestral branching process herein may also apply to general cut-and-paste ancestral branching processes.  This is beyond the scope of the current paper, as we are principally interested in establishing properties of the $\cp(\nu)$-ancestral branching process on $\treesk$.
\section{Mass fragmentations}\label{section:mass fragmentations}
A {\em mass fragmentation} of $x\in\mathbb{R}^+$ is a collection $M_x$ of masses such that
\begin{itemize}
	\item[(i)] $x\in M_x$ and
	\item[(ii)] there are $m_1,\ldots,m_k\in M_x$ such that $\sum_{i=1}^k m_i\leq x$ and
	$$M_x=\{x\}\cup M_{m_1}\cup\cdots\cup M_{m_k}.$$
\end{itemize}
We write $\mathcal{M}_x$ to denote mass fragmentations of $x$.  Essentially, a mass fragmentation of $x$ is a fragmentation tree whose vertices are labeled by masses such that the children of a vertex comprise a ranked-mass partition of its parent vertex.  The case where children $\{m_1,\ldots,m_k\}$ of a vertex $m$ satisfy $\sum_{i=1}^k m_i<m$ is called a {\em dissipative} mass fragmentation.  Herein, we are interested in {\em conservative} mass fragmentations which have the property that the children $\{m_1,\ldots,m_k\}$ of every vertex $m\in M_x$ satisfy $\sum_{i=1}^km_i=m$.  It is plain that $\mathcal{M}_x$ is isomorphic to $\mathcal{M}_1$ by scaling, i.e.\ $\mathcal{M}_x=x\mathcal{M}_1$ and so it is sufficient to study $\mathcal{M}_1$.  See Bertoin \cite{Bertoin2006} for a study of Markov processes on $\mathcal{M}_1$ called {\em fragmentation chains}.   Here we construct a Markov process on $\mathcal{M}_1$ which corresponds to the associated mass fragmentation valued process of the $\cp(\nu)$-ancestral branching Markov process on $\treesk$, which has been studied in previous sections.

\begin{defn}\label{defn:asymptotic frequencies}A subset $A\subset\mathbb{N}$ is said to have asymptotic frequency $\lambda$ if
\begin{equation}\lambda:=\lim_{n\rightarrow\infty}\frac{\#(A\cap[n])}{n}\label{eq:asymptotic frequencies}\end{equation}
exists.\end{defn}

A partition $B=\{B_1,B_2,\ldots\}\in\mathcal{P}$ is said to possess asymptotic frequency $\vert\vert B\vert\vert$ if each of its blocks has asymptotic frequency and we write $\vert\vert B\vert\vert:=\left(\vert\vert B_1\vert\vert,\ldots\right)^{\downarrow}\in\masspartition$, the decreasing rearrangement of block frequencies of $B$.  According to Kingman's correspondence \cite{Kingman1980}, any infinitely exchangeable partition $B$ of $\mathbb{N}$ possesses asymptotic frequencies which are distributed according to $\nu$ where $\nu$ is the unique measure on $\masspartition$ such that $B\sim\varrho_{\nu}$.

\subsection{Associated mass fragmentation process}\label{section:associated mass fragmentation process}
Fix $k\geq2$ and let $\nu$ be a probability measure on $\masspartitionk$.  Let $\mathcal{M}_1^{(k)}:=\{\mu\in\mathcal{M}_1:\#A\leq k\mbox{ for every }A\in\mu\}$ be the subspace of conservative mass fragmentations of $1$ such that each $A\in\mu\in\mathcal{M}_1^{(k)}$ has at most $k$ children. 

Construct a Markov chain on $\mathcal{M}_1^{(k)}$ as follows.  For $\mu\in\mathcal{M}_1^{(k)}$, the transition $\mu\mapsto\tilde{\mu}\in\mathcal{M}_1^{(k)}$ is generated by an i.i.d.\ collection $S:=\{s^u:u\in\mathcal{U}\}$ of $\nu^{(k)}$ mass partitions, i.e.\ $s^u:=(s^u_1,\ldots,s^u_k)\in\prod_{i=1}^k\masspartitionk$ is an i.i.d.\ collection of mass partitions distributed according to $\nu$ and $s^w$ is independent of $s^v$ for all $w\neq v$, and $\Sigma:=\{\sigma^u:u\in\mathcal{U}\}$ i.i.d.\ $k$-tuples of i.i.d.\ uniform permutations of $[k]$.
\begin{itemize}
	\item[(i)] Write $\mu:=\{\mu^u:u\in\mathcal{U}\}$ and $\tilde{\mu}:=\{\tilde{\mu}^u:u\in\mathcal{U}\}$.
	\item[(ii)] Put $\tilde{\mu}^{\emptyset}=1$, the root of $\tilde{\mu}$.
	\item[(iii)] Given $\tilde{\mu}^u\in\tilde{\mu}$, put $\tilde{\mu}^{uj}$ equal to the $j$th largest column total of the matrix
		\begin{displaymath}
\bordermatrix{\text{}&s^{u}_{1\subdot} & s^{u}_{2\subdot} & \ldots & s^{u}_{k\subdot}\cr
\tilde{\mu}^{u}\mu^1 & \tilde{\mu}^{u}\mu^1 s^{u}_{1,\sigma^{u}_1(1)} & \tilde{\mu}^{u}\mu^1 s^{u}_{1,\sigma^{u}_1(2)}&\ldots& \tilde{\mu}^{u}\mu^1 s^{u}_{1,\sigma^{u}_1(k)}\cr
\tilde{\mu}^{u}\mu^2 & \tilde{\mu}^{u}\mu^2 s^{u}_{2,\sigma^{u}_2(1)} & \tilde{\mu}^{u}\mu^2 s^{u}_{2,\sigma^{u}_2(2)}&\ldots& \tilde{\mu}^{u}\mu^2 s^{u}_{2,\sigma^{u}_2(k)}\cr
\vdots & \vdots & \vdots & \ddots & \vdots\cr
\tilde{\mu}^{u}\mu^k & \tilde{\mu}^{u}\mu^k s^{u}_{k,\sigma^{u}_k(1)} & \tilde{\mu}^{u}\mu^k s^{u}_{k,\sigma^{u}_k(2)}&\ldots& \tilde{\mu}^{u}\mu^k s^{u}_{k,\sigma^{u}_k(k)}}
\end{displaymath}
i.e. $\tilde{\mu}^{uj}:=\left(\sum_{i=1}^k\tilde{\mu}^u\mu^is^{u}_{i,\sigma^u_i(m)},m=1,\ldots,k\right)^{\downarrow}_j,$ where $\mu^1,\ldots,\mu^k$ correspond to the mass fragmentation of the root of $\mu$.
\end{itemize}
\begin{defn}\label{defn:associated mass fragmentation} For a fragmentation tree $T\in\mathcal{T}$, we write $\mathbb{M}(T)$ to denote the associated mass fragmentation of $T$, i.e.\ the mass fragmentation of $1$ obtained by replacing each child of $T$ by its asymptotic frequency, if it exists.\end{defn}
\begin{thm}\label{thm:associated mass fragmentation chain}Let $\mathbb{T}:=(T_n,n\geq1)$ be a $\cp(\nu)$-ancestral branching Markov chain with transition measure $Q(\cdot,\cdot;\nu)$ on $\mathcal{T}^{(k)}$, with initial distribution $\Pi$ some infinitely exchangeable measure on $\mathcal{T}$. Let $\mu:=(\mu_n,n\geq1)$ be the Markov chain on $\mathcal{M}_1^{(k)}$ generated from the above procedure, then $\mathbb{M}(\mathbb{T})=_{\mathcal{L}}\mu$.  Moreover, the transition measure $\lambda(\cdot,\cdot;\nu)$ for $\mu$ is given by 
$$\lambda(\mu,\mu';\nu)=Q(T_{\mu},\mathbb{M}^{-1}(\mu');\nu)$$
where $T_{\mu}$ is any element of $\mathbb{M}^{-1}(\mu):=\{T\in\treesk:\mathbb{M}(T)=\mu\}$.\end{thm}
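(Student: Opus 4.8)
The plan is to build a coupling between $\mathbb{T}$ and $\mu$ out of the genealogical branching construction of Section~\ref{section:alternative construction}, and then read off the transition kernel. First I would check that $\mathbb{M}(\mathbb{T})$ is well defined. By Theorems~\ref{thm:exchangeable branching kernel} and~\ref{thm:consistent branching kernel} the $\cp(\nu)$-ancestral branching kernel is infinitely exchangeable, so, the initial law $\Pi$ being infinitely exchangeable, each $T_n$ is an infinitely exchangeable fragmentation tree. The root partition of such a tree is an infinitely exchangeable partition, hence has asymptotic frequencies almost surely by Kingman's correspondence; applying this recursively to the reduced subtree attached at each block shows that every vertex of $T_n$ has an asymptotic frequency almost surely. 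Thus $\mathbb{M}(T_n)\in\mathcal{M}_1^{(k)}$ is defined for all $n$ and $\mathbb{M}(\mathbb{T})$ is a genuine $\mathcal{M}_1^{(k)}$-valued sequence; moreover $\mathbb{M}^{-1}(\mu)\neq\emptyset$ since a fragmentation tree with prescribed vertex frequencies can be built level by level from the paintbox construction.

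The engine of the proof is a multiplicativity property of asymptotic frequencies. If $B$ is a partition measurable with respect to $\sigma(T_n)$, with blocks $B_1,B_2,\dots$ of asymptotic frequencies $\|B_i\|$, and $C=(C_1,\dots,C_k)$ is a $\varrho_\nu^{(k)}$-sequence of paintboxes independent of $T_n$, then conditioning on $T_n$ turns each $B_i$ into a deterministic set of density $\|B_i\|$ whose elements receive i.i.d.\ colours under the paintbox $C_i$; the strong law of large numbers then gives $\|B_i\cap C_{i,l}\|=\|B_i\|\,\|C_{i,l}\|$ almost surely for each block $C_{i,l}$ of $C_i$, while Kingman's correspondence gives $\|C_i\|\sim\nu$. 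A companion observation reconciles the orderings used by the two algorithms: a uniform random permutation applied to the order-of-appearance frequency vector of a paintbox and one applied to its decreasing rearrangement are both uniform over the orderings of the same padded multiset, which is exactly why the permutations $\sigma^u$ occur in both constructions; thus the permuted paintbox frequencies $\{\|C^u_{i,\sigma^u_i(\cdot)}\|\}$ have the same law as the permuted $\nu$-masses $\{s^u_{i,\sigma^u_i(\cdot)}\}$.

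Now I would run the genealogical branching procedure for a step $T_n\mapsto T_{n+1}$ and apply $\mathbb{M}$. Proceeding along $\mathcal{U}$ from the root, at each vertex the fresh innovation $B^u$ is independent of everything produced above $u$, so the multiplicativity lemma applies conditionally; the asymptotic frequency of each child of each vertex of $T_{n+1}$ is then a column total of the associated $\cp$-matrix, whose entries are products of a vertex frequency of $T_n$ with permuted paintbox frequencies, and matching these level by level against the column totals of the matrices in the mass procedure --- with $\|C^u_{i,\cdot}\|$ in the role of $s^u_i$ and the rearrangements $(\cdot)^{\downarrow}$ aligning the genealogical labels of $\tilde\mu$ with those of $\mathbb{M}(T_{n+1})$ --- shows that $\mathbb{M}(T_{n+1})$ is obtained from $\mathbb{M}(T_n)$ by precisely the mass recursion. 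Iterating over $n$ and initializing the mass chain at $\mathbb{M}(\tau)$ with $\tau\sim\Pi$ yields $\mathbb{M}(\mathbb{T})=_{\mathcal{L}}\mu$.

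The transition formula then drops out. The vertex-by-vertex computation shows that the conditional law of $\mathbb{M}(T_{n+1})$ given $T_n=T$ depends on $T$ only through its vertex frequencies, i.e.\ only through $\mathbb{M}(T)$; hence $Q(T,\mathbb{M}^{-1}(\mu');\nu)$ takes the same value for every $T\in\mathbb{M}^{-1}(\mu)$, so $\lambda(\mu,\mu';\nu):=Q(T_\mu,\mathbb{M}^{-1}(\mu');\nu)$ is well defined and equals $\mathbb{P}(\mathbb{M}(T_{n+1})=\mu'\mid\mathbb{M}(T_n)=\mu)$, the one-step transition probability of $\mu$; this same functional dependence makes $\mathbb{M}(\mathbb{T})$ a Markov chain. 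I expect the main obstacle to be exactly this ``factoring through $\mathbb{M}(T_n)$'': one must identify, at each recursion step, the partition $\Pi_{T_{|A^u}}$ fed into the $\cp$-operation and check that its (relative) block frequencies are the correct functions of the frequencies of $T_n$ --- using that under the $\cp(\nu)$ dynamics a new block meets each old block with positive frequency, so that up to an almost-sure event, and with some care for blocks of frequency zero, the reduced-subtree restriction collapses to the restriction of $\Pi_{T_n}$ --- and then propagate the multiplicativity lemma cleanly while tracking all the rearrangements $(\cdot)^{\downarrow}$.
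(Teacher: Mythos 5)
Your proposal is correct and follows essentially the same route as the paper: both arguments couple the tree chain and the mass chain through the genealogical construction, using Kingman's correspondence to identify the asymptotic frequencies of the driving paintboxes with $\nu$-distributed mass partitions (the paper samples $s^u$ first and then $B^u_j\sim\varrho_{s^u_j}$; you sample $B^u_j\sim\varrho_\nu$ and extract $s^u_j=\Vert B^u_j\Vert$, which is the same joint law), and both obtain the transition formula by the Burke--Rosenblatt/lumpability criterion for Markov functions of Markov chains. The only difference is that you spell out the SLLN multiplicativity $\Vert B_i\cap C_{i,l}\Vert=\Vert B_i\Vert\,\Vert C_{i,l}\Vert$ and the bookkeeping of the rearrangements, which the paper's proof leaves implicit in the phrase ``by the above construction.''
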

\begin{proof}
Fix $k\geq2$ and $\nu$ a probability measure on $\masspartitionk$.  For $\mathbb{T}\sim Q(\cdot,\cdot;\nu)$ we have that for every $n\geq1$ and $t\in T_n$, the set of children $\{t_1,\ldots,t_m\}$ of $t$ forms an exchangeable partition of $\{t\}\subset\mathbb{N}$ given $T_{n-1}$ and so possesses asymptotic frequency $\vert\vert t\vert\vert$ almost surely by Kingman's correspondence.

The alternative construction of the Markov chain $\mathbb{T}$ with transition measure $Q(\cdot,\cdot;\nu)$ constructed in section \ref{section:alternative construction} can also be constructed as follows.  Let $S:=\{s^u:u\in\mathcal{U}\}$ be the collection of mass partitions in the construction at the beginning of this section.  Given $S$, generate $B:=\{B^u:u\in\mathcal{U}\}\in\prod_{u\in\mathcal{U}}\left[\prod_{i=1}^k\partitionsk\right]$ by letting $B^u:=(B^u_1,\ldots,B^u_k)$ and $B^u_j\sim\varrho_{s^{u}_j}$ independently of all other $B^v_i$.  Constructed in this way, $\{B^u:u\in\mathcal{U}\}$ is a collection of i.i.d.\ $\varrho_{\nu}^{(k)}$ partitions whose asymptotic frequencies satisfy $\vert\vert B^u_j\vert\vert = s^u_j$ almost surely.  Furthermore, the unconditional distribution of each $B^u$ is $\varrho_{\nu}^{(k)}$.

Next, we let $\Sigma:=\{\sigma^u:u\in\mathcal{U}\}$ be a collection of i.i.d.\ $k$-tuples of i.i.d.\ uniform permutations of $[k]$ and generate transitions of $\mathbb{T}$ from the alternative construction of section \ref{section:alternative construction} based on $\Sigma$ and $\{B^u:u\in\mathcal{U}\}$ and generate a Markov chain $\mu$ on $\mathcal{M}_1$ based on $\Sigma$ and $S$.  Then we have the $\mathbb{T}$ is a Markov chain with transition measure $Q(\cdot,\cdot;\nu)$ on $\left(\treesk,\sigma\left(\bigcup_{n\geq1}\treesnk\right)\right)$ and, furthermore, by the above construction, we have that the associated mass fragmentation chain $\mathbb{M}(\mathbb{T}):=(\mathbb{M}(\mathbb{T}_n),n\geq1)$ is equal to $\mu$ almost surely.

By the three step construction of transitions on $\mathcal{M}_1$ at the beginning of this section, it is clear that $\mu$ is a Markov chain.  Hence, the function $\mathbb{M}(\mathbb{T})$ is a Markov chain and so the result of Burke and Rosenblatt \cite{BurkeRosenblatt1958} states that it is necessary that the transition measure of $\mathbb{M}(\mathbb{T})$ satisfies
$$Q\mathbb{M}^{-1}(m,m';\nu)=\int_{\mathbb{M}^{-1}(m')}Q(T_m,dt)$$
for all $T_m\in\mathbb{M}^{-1}(m):=\{T\in\mathcal{T}:\mathbb{M}(T)=m\}$.

Finally, since $\mathbb{M}(\mathbb{T})=\mu$ almost surely, we have that the transition measure $\lambda$ of $\mu$ on $\mathcal{M}_1$ satisfies $\lambda=\pi\mathbb{M}^{-1}$.
\end{proof}
\begin{cor}The associated mass fragmentation process $\mathbb{M}(\mathbb{T})$ exists almost surely.\end{cor}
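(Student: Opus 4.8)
The plan is to read the statement off the coupling already constructed in the proof of Theorem~\ref{thm:associated mass fragmentation chain}. Recall from Definition~\ref{defn:associated mass fragmentation} that $\mathbb{M}(T)$ is well defined precisely when every vertex (child) of $T$ possesses an asymptotic frequency in the sense of Definition~\ref{defn:asymptotic frequencies}; since $\mathbb{T}=(T_n,n\geq1)$ is a sequence of trees in $\treesk$, it suffices to exhibit a single event of probability one on which every vertex of every $T_n$ has an asymptotic frequency. The shortest route, which I would write up as the main argument, is this: in the proof of Theorem~\ref{thm:associated mass fragmentation chain} the chain $\mathbb{T}$ is built from the i.i.d.\ paintbox family $\{B^u:u\in\mathcal{U}\}$ with $\vert\vert B^u_j\vert\vert=s^u_j$ almost surely, while the mass chain $\mu$ on $\mathcal{M}_1^{(k)}$ is built from the same $\{s^u\}$ and $\{\sigma^u\}$, and it is shown there that $\mathbb{M}(\mathbb{T})=\mu$ almost surely. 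Since $\mu$ is by construction a genuine $\mathcal{M}_1^{(k)}$-valued process, the identity $\mathbb{M}(\mathbb{T})=\mu$ a.s.\ already exhibits $\mathbb{M}(\mathbb{T})$ as existing almost surely, and moreover identifies its state space as $\mathcal{M}_1^{(k)}$.

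For a more self-contained argument I would instead proceed directly via Kingman's correspondence \cite{Kingman1980}. First, because the $\cp(\nu)$-ancestral branching kernels are infinitely exchangeable (Section~\ref{section:infinitely exchangeable processes} applied to \eqref{eq:fidi tps}, together with Corollary~\ref{cor:existence inf exch stationary} and the surrounding discussion), each marginal $T_n$ is an infinitely exchangeable random fragmentation tree. I would then induct on the generation of a vertex: the root has asymptotic frequency $1$; and if a non-singleton vertex $t\in T_n$ has asymptotic frequency $\vert\vert t\vert\vert$, then its children form, conditionally on the rest of $T_n$, an exchangeable partition of $t$, so by Kingman's correspondence their relative frequencies within $t$ exist almost surely, and multiplying by $\vert\vert t\vert\vert$ gives their absolute frequencies (with the conservative identity $\sum_i\vert\vert t_i\vert\vert=\vert\vert t\vert\vert$ following from $\sum_i\#(t_i\cap[n'])=\#(t\cap[n'])$ for the finitely many children). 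Using the genealogical indexing $\mathcal{U}=\bigcup_{m\geq0}\mathbb{N}^m$ of Section~\ref{section:alternative construction}, each $T_n$ has at most countably many vertices and there are countably many time indices $n$, so the exceptional sets form a countable family whose union $N$ is still null; off $N$, $\mathbb{M}(T_n)$ is well defined for all $n$ simultaneously.

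The only point genuinely requiring care — the \emph{hard part}, such as it is — is that the exchangeability of the children of a vertex is only conditional (on the previous state and on the vertex's ancestors within $T_n$), so one must apply Kingman's correspondence conditionally and then integrate, checking that the exceptional null sets can be chosen in a measurable way so as to be assembled into the single null set $N$. This is routine because all the relevant $\sigma$-algebras are countably generated and the conditional laws depend measurably on the conditioning variables; no compactness or analytic input beyond what has already been established is needed. I would therefore lead with the coupling argument of the first paragraph and relegate the Kingman-correspondence version to a one-line remark.
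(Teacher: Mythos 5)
Your proposal is correct and matches the paper's (implicit) argument: the corollary is stated without proof as an immediate consequence of Theorem \ref{thm:associated mass fragmentation chain}, whose proof establishes both the coupling $\mathbb{M}(\mathbb{T})=\mu$ a.s.\ and the existence of asymptotic frequencies for each vertex via conditional exchangeability and Kingman's correspondence — exactly your two paragraphs. Your second paragraph merely fleshes out the countable-union-of-null-sets bookkeeping that the paper leaves tacit, which is a reasonable amount of added care but not a different route.
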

\subsection{Equilibrium measure}
As in section \ref{section:equilibrium measure}, suppose $\nu$ is non-degenerate at $(1,0,\ldots,0)\in\masspartitionk$.  Theorem \ref{cor:existence inf exch stationary} states that a Markov chain $\mathbb{T}:=(T_n,n\geq1)$ governed by $Q(\cdot,\cdot;\nu)$ possesses a unique equilibrium measure $\rho(\cdot;\nu)$.  The following theorem follows immediately from this fact and from theorem \ref{thm:associated mass fragmentation chain}.
\begin{thm}\label{thm:mass fragmentation stationary measure}Let $\nu$ be a probability measure on $\masspartitionk$ such that $\nu((1,0,\ldots,0))<1$.  The mass fragmentation chain $\mu:=(\mu_n,n\geq1)$ on $\mathcal{M}_1$ governed by $Q\mathbb{M}^{-1}(\cdot,\cdot;\nu)$ possesses a unique stationary measure $\zeta(\cdot;\nu)$.  Moreover, for $\mu\in\mathcal{M}_1^{(k)}$, 
$$\zeta(\mu;\nu)=\rho(\mathbb{M}^{-1}(\mu);\nu)$$
where $\rho(\cdot;\nu)$ is the unique equilibrium measure of $Q(\cdot,\cdot,\nu)$ on $\treesk$ from corollary \ref{cor:existence inf exch stationary}.
\end{thm}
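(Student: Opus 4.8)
The plan is to take $\zeta(\cdot;\nu)$ to be the image of the tree-valued equilibrium measure $\rho(\cdot;\nu)$ under the lumping map $\mathbb{M}$ of Definition \ref{defn:associated mass fragmentation}. By Corollary \ref{cor:existence inf exch stationary}, $\rho(\cdot;\nu)$ is infinitely exchangeable, so the set of children of every vertex is (conditionally) an exchangeable partition; hence, by Kingman's correspondence — exactly as in the proof of Theorem \ref{thm:associated mass fragmentation chain} — all the asymptotic frequencies defining $\mathbb{M}$ exist $\rho(\cdot;\nu)$-almost surely, and $\mathbb{M}$ is defined on a set of full $\rho(\cdot;\nu)$-measure. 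Thus $\zeta(\cdot;\nu):=\rho(\cdot;\nu)\,\mathbb{M}^{-1}$, i.e.\ $\zeta(B;\nu):=\rho(\mathbb{M}^{-1}(B);\nu)$ for Borel $B\subseteq\mathcal{M}_1^{(k)}$, is a well-defined probability measure; this is precisely the displayed formula once $\zeta(\mu;\nu)=\rho(\mathbb{M}^{-1}(\mu);\nu)$ is understood, as in Theorem \ref{thm:associated mass fragmentation chain}, as the relation between the push-forward measure and the fibres of $\mathbb{M}$.

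Next I would check stationarity. By Theorem \ref{thm:associated mass fragmentation chain} the mass fragmentation chain is Markov with transition kernel $Q\mathbb{M}^{-1}(\mu,B;\nu)=Q(T_\mu,\mathbb{M}^{-1}(B);\nu)$, the right-hand side being independent of the representative $T_\mu\in\mathbb{M}^{-1}(\mu)$, so that $Q\mathbb{M}^{-1}(\mathbb{M}(T),B;\nu)=Q(T,\mathbb{M}^{-1}(B);\nu)$ whenever $\mathbb{M}(T)$ is defined. Using the change-of-variables identity $\int g\,d(\rho\,\mathbb{M}^{-1})=\int (g\circ\mathbb{M})\,d\rho$ with $g=Q\mathbb{M}^{-1}(\cdot,B;\nu)$, and then stationarity of $\rho(\cdot;\nu)$ for $Q(\cdot,\cdot;\nu)$ from Corollary \ref{cor:existence inf exch stationary} (itself obtained via Proposition \ref{prop:inf exch stationary} and Kolmogorov's extension theorem),
$$\int_{\mathcal{M}_1^{(k)}}\zeta(d\mu;\nu)\,Q\mathbb{M}^{-1}(\mu,B;\nu)=\int_{\treesk}\rho(dT;\nu)\,Q(T,\mathbb{M}^{-1}(B);\nu)=\rho(\mathbb{M}^{-1}(B);\nu)=\zeta(B;\nu),$$
so $\zeta(\cdot;\nu)$ is stationary for $Q\mathbb{M}^{-1}(\cdot,\cdot;\nu)$.

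The remaining point, uniqueness, is where I expect the genuine difficulty: unlike the tree chain, whose finite restrictions $T_{|[n]}$ are \emph{finite} Markov chains (so ergodicity was immediate from Proposition \ref{prop:existence fidi stationary} and positivity of $Q_n$), the mass fragmentation chain admits no such reduction, since $\mathbb{M}$ discards the labels. I would instead use that the genealogical branching description of section \ref{section:alternative construction} is local in depth: the restriction of the next state $\mu'$ to generations $\leq\ell$ is a function of the restriction of $\mu$ to generations $\leq\ell$ together with fresh noise, so for each $\ell$ the depth-$\leq\ell$ truncation of the lumped chain is itself a Markov chain on the compact space of conservative mass fragmentations of depth $\leq\ell$ with at most $k$ children per vertex, and these truncations form a consistent projective system. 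One then shows each truncated chain has a unique stationary law — the $\ell=1$ case being the $\cp(\nu)$ mass fragmentation process of \cite{Crane2011a}, the inductive step using the recursive product form inherited from \eqref{eq:recursive branching kernel} — and assembles them via Kolmogorov's extension theorem, forcing the stationary measure of $Q\mathbb{M}^{-1}(\cdot,\cdot;\nu)$ to be unique and hence equal to $\zeta(\cdot;\nu)$. The main obstacle is thus the uniqueness of the stationary law on these continuum-cardinality truncated state spaces; note that the naive shortcut — running the tree chain from some $T_0\in\mathbb{M}^{-1}(\mu_0)$ so that, by Theorem \ref{thm:associated mass fragmentation chain}, $\mathbb{M}(T_\cdot)$ is the lumped chain from $\mu_0$, and using $(T_m)_{|[n]}\Rightarrow\rho_n(\cdot;\nu)$ — does not close the argument directly, because $\mathbb{M}$ is not continuous in the metric $d$ of \eqref{eq:tree metric}, so the finite-depth truncation of $\mathbb{M}$ has to be invoked in either approach.
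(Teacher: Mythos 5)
Your construction of $\zeta:=\rho\mathbb{M}^{-1}$ and your stationarity computation coincide with the paper's proof: the paper handles well-definedness of $\mathbb{M}$ via Kingman's correspondence inside Theorem \ref{thm:associated mass fragmentation chain}, and then runs exactly your change-of-variables chain, $\rho\mathbb{M}^{-1}(\mu';\nu)=\int_{\treesk}Q(\tau,\mathbb{M}^{-1}(\mu');\nu)\,\rho(d\tau;\nu)=\int_{\mathcal{M}_1}Q\mathbb{M}^{-1}(\mu,\mu';\nu)\,\rho\mathbb{M}^{-1}(d\mu)$, using stationarity of $\rho(\cdot;\nu)$ and the representative-independence of $Q(T_\mu,\mathbb{M}^{-1}(\cdot);\nu)$, to conclude that $\zeta$ is invariant for the lumped kernel. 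Where you diverge is on uniqueness, and your instinct there is correct in a way worth recording: the paper's proof stops after establishing invariance and never returns to the uniqueness assertion in the statement, and uniqueness of $\rho$ for $Q$ does not by itself deliver uniqueness for $Q\mathbb{M}^{-1}$, since an arbitrary stationary measure of the lumped chain need not arise as the push-forward of a $Q$-stationary measure, and $\mathcal{M}_1^{(k)}$ lacks the finite-restriction structure that made the tree case ($\treesnk$ finite, $Q_n>0$) immediate. Your proposed repair via depth-$\ell$ truncations is a sensible route, but as written it is a plan rather than a proof: the key inductive step — uniqueness of the stationary law of each truncated chain on a continuum state space — is asserted, not established. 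So your argument matches the paper on everything the paper actually proves, and correctly isolates (without closing) the gap the paper leaves open.
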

\begin{proof}
Let $\mu$ be a Markov chain on $\mathcal{M}_1$ with transition measure $\lambda(\cdot,\cdot;\nu)$ governed by the transition procedure at the beginning of section \ref{section:mass fragmentations}.  By theorem \ref{thm:associated mass fragmentation chain} we have that $\lambda\equiv Q\mathbb{M}^{-1}$ where $Q(\cdot,\cdot;\nu)$ is the transition measure of the $\cp(\nu)$-ancestral branching Markov chain on $\treesk$ with unique equilibrium measure $\rho(\cdot;\nu)$ from corollary \ref{cor:existence inf exch stationary}.

 Furthermore, it is shown in theorem \ref{thm:associated mass fragmentation chain} that $\mu$ is equal in distribution to the associated mass fragmentation chain of a Markov chain on $\treesk$ governed by $\pi(\cdot,\cdot;\nu)$.  Hence, we have 
$$\rho(\tau';\nu)=\int_{\treesk}Q(\tau,\tau';\nu)\rho(d\tau)$$
and for $\mu'\in\mathcal{M}_1$
\begin{eqnarray*}
\rho\mathbb{M}^{-1}(\mu';\nu)&=&\rho[\mathbb{M}^{-1}(\mu);\nu]\\
&=&\int_{\mathbb{M}^{-1}(\mu)}\int_{\treesk}Q(\tau,dt;\nu)\rho(d\tau;\nu)\\
&=&\int_{\treesk}Q(\tau,\mathbb{M}^{-1}(\mu');\nu)\rho(d\tau;\nu)\\
&=&\int_{\mathcal{M}_1}Q\mathbb{M}^{-1}(\mu,\mu';\nu)\rho\mathbb{M}^{-1}(d\mu)\\
&=&\int_{\mathcal{M}_1}\lambda(\mu,\mu';\nu)\rho\mathbb{M}^{-1}(d\mu)\end{eqnarray*}
which shows that $\zeta:=\rho\mathbb{M}^{-1}$ is stationary for $\lambda$.
\end{proof}
\subsection{Poissonian construction}\label{section:poissonian construction mass fragmentation}
Just as the $\cp(\nu)$-ancestral branching process on $\treesk$ admits a Poissonian construction, which we showed in section \ref{section:poissonian construction}, so does its associated mass fragmentation-valued process, which we now show.

Let $\nu$ be a probability measure on $\masspartitionk$.  Let $S=\{(t,s^u):u\in\mathcal{U}\}\subset\mathbb{R}^+\times\prod_{u\in\mathcal{U}}\left[\prod_{i=1}^k\masspartitionk\right]$ be a Poisson point process with intensity $dt\otimes\lambda\bigotimes_{u\in\mathcal{U}}\nu^{(k)}$ for some $\lambda>0$ where $\nu^{(k)}:=\nu\otimes\cdots\otimes\nu$ is the $k$-fold product measure on $\prod_{i=1}^k\masspartitionk$ and $s^u:=(s^{u}_1,\ldots,s^{u}_k)\in\prod_{i=1}^k\masspartitionk$ for each $u\in\mathcal{U}$.

Construct a Markov process $\mu:=(\mu(t),t\geq0)$ in continuous-time on $\mathcal{M}_1$ as follows.  Let $\mu_0$ be a mass fragmentation drawn from some distribution on $\mathcal{M}_1$.  Put $\mu(0)=\mu_0$ and
\begin{itemize}
	\item if $t$ is not an atom time for $S$, $\mu(t)=\mu(t-)$;
	\item if $t$ is an atom time for $S$, generate $\Sigma_t:=\{\sigma^u:u\in\mathcal{U}\}$ where $\sigma^v$ and $\sigma^w$ are independent for all $v\neq w$ and $\sigma^u:=(\sigma^u_1,\ldots,\sigma^u_k)$ is an i.i.d.\ sequence of uniform permutations of $[k]$ for each $u\in\mathcal{U}$.  Given $(t,s^u)\in S$, $\sigma^u$ and $\mu(t-)=\{\mu^u:u\in\mathcal{U}\}$, put $\mu(t)=\{\tilde{\mu}^u:u\in\mathcal{U}\}$ where
\begin{itemize}
	\item[1)] $\tilde{\mu}^{\emptyset}=1$ and
	\item[2)] given $\tilde{\mu}^u$, put $\tilde{\mu}^{uj}$ equal to the $j$th largest column total of the matrix
	\begin{displaymath}
\bordermatrix{\text{}&s^{u}_{1\subdot} & s^{u}_{2\subdot} & \ldots & s^{r_i}_{k\subdot}\cr
\tilde{\mu}^{u}\mu^1 & \tilde{\mu}^{u}\mu^1 s^{u}_{1,\sigma^{u}_1(1)} & \tilde{\mu}^{u}\mu^1 s^{u}_{1,\sigma^{u}_1(2)}&\ldots& \tilde{\mu}^{u}\mu^1 s^{u}_{1,\sigma^{u}_1(k)}\cr
\tilde{\mu}^{u}\mu^2 & \tilde{\mu}^{u}\mu^2 s^{u}_{2,\sigma^{u}_2(1)} & \tilde{\mu}^{u}\mu^2 s^{u}_{2,\sigma^{u}_2(2)}&\ldots& \tilde{\mu}^{u}\mu^2 s^{u}_{2,\sigma^{u}_2(k)}\cr
\vdots & \vdots & \vdots & \ddots & \vdots\cr
\tilde{\mu}^{u}\mu^k & \tilde{\mu}^{u}\mu^k s^{u}_{k,\sigma^{u}_k(1)} & \tilde{\mu}^{u}\mu^k s^{u}_{k,\sigma^{u}_k(2)}&\ldots& \tilde{\mu}^{u}\mu^k s^{u}_{k,\sigma^{u}_k(k)}}
\end{displaymath}
i.e. $\tilde{\mu}^{uj}:=\left(\sum_{i=1}^k\tilde{\mu}^u\mu^is^{u}_{i,\sigma^u_i(m)},m=1,\ldots,k\right)^{\downarrow}_j.$
\end{itemize}
\end{itemize}
\begin{thm}\label{thm:associated mass fragmentation}Let $\mathbb{T}:=(T(t),t\geq0)$ be a $\cp(\nu)$-ancestral branching Markov process from section \ref{section:continuous-time} and let $X:=(X(t),t\geq0)$ be the Markov process on $\mathcal{M}_1$ generated from the above Poisson point process, then $\mathbb{M}(T)=_{\mathcal{L}}X$.\end{thm}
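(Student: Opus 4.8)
The plan is to prove the identity in law by an explicit coupling of the two Poissonian constructions, extending the argument used for the discrete-time statement in Theorem~\ref{thm:associated mass fragmentation chain} to the continuous-time setting of Sections~\ref{section:poissonian construction} and~\ref{section:poissonian construction mass fragmentation}. I would start from a single Poisson point process $S=\{(t,s^u:u\in\mathcal{U})\}$ on $\mathbb{R}^+\times\prod_{u\in\mathcal{U}}[\prod_{i=1}^k\masspartitionk]$ with intensity $dt\otimes\lambda\bigotimes_{u\in\mathcal{U}}\nu^{(k)}$, the process driving $X$. At each atom, conditionally on $s^u=(s^u_1,\ldots,s^u_k)$, generate $B^u=(B^u_1,\ldots,B^u_k)$ with $B^u_j\sim\varrho_{s^u_j}$, independently over $u$ and $j$; by the mixture representation of the paintbox, the resulting marked point process $P=\{(t,B^u:u\in\mathcal{U})\}$ has intensity $dt\otimes\lambda\bigotimes_{u\in\mathcal{U}}\varrho_{\nu}^{(k)}$ and hence drives a $\cp(\nu)$-ancestral branching process $T$ exactly as in Section~\ref{section:poissonian construction}. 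I would drive $X$ and $T$ with the same collection $\Sigma=\{\sigma^u:u\in\mathcal{U}\}$ of uniform permutations at each atom time, and take $\mu_0=\mathbb{M}(\tau)$ so the two processes start from matched states.

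The heart of the proof is then a pathwise identity $\mathbb{M}(T(t))=X(t)$ for all $t\geq0$. First, since $\mathcal{U}$ is countable and $S$ has countably many atoms, Kingman's correspondence (equivalently, the strong law applied to the i.i.d.\ labels in the paintbox construction) gives, on a single almost sure event, $\|B^u_j\|=s^u_j$ simultaneously for every atom and every $u,j$. On that event I would argue by induction along the genealogical index $u\in\mathcal{U}$, generation by generation, that at each atom time the block of $\cp(\Pi_{T_{|A^u}},B^u,\sigma^u)$ indexed by $i$ has asymptotic frequency equal to the $i$th column total $\sum_{j=1}^k\tilde\mu^u\mu^j s^u_{j,\sigma^u_j(i)}$ of the matrix displayed in Section~\ref{section:associated mass fragmentation process}: this uses that asymptotic frequency is finitely additive over disjoint blocks, and that intersecting a $\varrho_{s}$ paintbox block with a set of frequency $f$ produces a set of frequency $s\cdot f$ (again by the strong law, now restricted to that block). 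Matching the ordering conventions — children listed by decreasing frequency on the mass side, and the induced decreasing rearrangement $\|\cdot\|^{\downarrow}$ on the tree side — then yields $\mathbb{M}(T(t))=X(t)$ on the almost sure event, for all $t$ at once, which in particular shows the associated mass fragmentation $\mathbb{M}(T(t))$ of Definition~\ref{defn:associated mass fragmentation} is well defined for every $t$.

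To complete the argument, I would note that it suffices to verify the coupling at the level of finite-dimensional restrictions $T_{|[n]}$ and then invoke consistency (as in Proposition~\ref{prop:poissonian rates}): each restriction experiences only finitely many jumps on a bounded interval since the jump rate is at most $\lambda$, so there is no issue of explosion, and the frequencies are read off as genuine limits $n\to\infty$ of the restricted blocks. The main obstacle I anticipate is bookkeeping rather than conceptual — carefully propagating asymptotic frequencies through the recursion defining $\mathbb{M}$, keeping the relative-versus-absolute normalizations and the decreasing rearrangements consistent between the tree construction of Section~\ref{section:alternative construction} and the mass-fragmentation matrix of Section~\ref{section:associated mass fragmentation process}, and checking that the single null set handling all countably many paintbox blocks suffices to make $t\mapsto\mathbb{M}(T(t))$ pathwise equal to $X$.
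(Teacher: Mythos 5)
Your proposal is correct and follows essentially the same route as the paper's proof: couple the two processes by generating the paintboxes $B^u_j\sim\varrho_{s^u_j}$ conditionally on the mass atoms of $S$, drive both constructions with the same permutation sequence $\Sigma$, and read off the asymptotic frequency of each cut-and-paste block as the corresponding column total of the mass matrix, yielding $\mathbb{M}(T(t))=X(t)$ pathwise and hence in law. Your additional care about the single null set covering all countably many paintbox blocks and about matching the decreasing-rearrangement conventions is a sensible elaboration of the same argument rather than a different approach.
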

\begin{proof}
Let $k\in\mathbb{N}$ and $\nu$ be a measure on $\masspartitionk$.

Let $S=\{(t,s^u):u\in\mathcal{U}\}\subset\mathbb{R}^+\times\prod_{u\in\mathcal{U}}\left[\prod_{i=1}^k\masspartitionk\right]$ be a Poisson point process with intensity $dt\otimes\lambda\bigotimes_{u\in\mathcal{U}}\nu^{(k)}$ for some $\lambda>0$ as shown above and let $X:=(X(t),t\geq0)$ be the process on $\mathcal{M}_1$ constructed above.  Given $S$, generate $P:=\{(t,B^u):u\in\mathcal{U}\}\subset\mathbb{R}^+\times\prod_{u\in\mathcal{U}}\left[\prod_{i=1}^k\mathcal{P}^{(k)}\right]$ where for each $(t,s^u:u\in\mathcal{U})\in S$ we let $B^u:=(B^u_1,\ldots,B^u_k)\in\prod_{i=1}^k\mathcal{P}^{(k)}$ be a $k$-tuple of partitions such that $B^u_i\sim\varrho_{s^{u}_i}$ for each $i=1,\ldots,k$ and all components are independent.  Thus, we have that $P$ is a Poisson point process on $\mathbb{R}^+\times\prod_{u\in\mathcal{U}}\left[\prod_{i=1}^k\mathcal{P}^{(k)}\right]$ with intensity measure $dt\otimes\lambda\bigotimes_{u\in\mathcal{U}}\varrho_{\nu}^{(k)}$.  Given $P$ and $S$, generate $\Sigma:=\{\sigma^u:u\in\mathcal{U}\}$ independently of $P$ and $S$ such that $\sigma^v$ and $\sigma^w$ are independent for all $v\neq w$ and each $\sigma^u=(\sigma^u_1,\ldots,\sigma^u_k)$ is an i.i.d.\ collection of uniform permutations of $[k]$.

Let $\mathbb{T}:=(T(t),t\geq0)$ be the process on $\treesk$ constructed from $\Sigma$ and $P$, as shown in section \ref{section:poissonian construction}, so that $\mathbb{T}$ is a $\cp(\nu)$-ancestral branching Markov process. Likewise, let $X:=(X(t),t\geq0)$ be the process on $\mathcal{M}_1$ constructed from $\Sigma$ and $S$ shown above.

Now for all $t\geq0$, let $T(t-)=\tau$.  Then $T(t)=\tilde{\tau}$ where
$$\tilde{\tau}^{uj}=\tilde{\tau}^u\bigcap\left(\bigcup_{i=1}^k(\tau^i\cap B^{u}_{i,\sigma^u_i(j)})\right)$$ for each $u\in\mathcal{U}$ and $j=1,\ldots,k$ which has asymptotic frequency
$$\vert\vert\tilde{\tau}^u\vert\vert\sum_{i=1}^k\vert\vert\tau^i\vert\vert\vert\vert B^{u}_{i,\sigma^u_i(j)}\vert\vert=\tilde{\mu}^u\sum_{i=1}^k\mu^i s^{u}_{i,\sigma^u_i(j)}\quad\mbox{a.s.}$$
Hence we have that $\mu=\mathbb{M}(\mathbb{T})$ a.s.\ in this construction and so $\mu=_{\mathcal{L}}\mathbb{M}(\mathbb{T})$.
\end{proof}
\begin{cor}\label{cor:existence mass fragmentation}The process $\mathbb{M}(\mathbb{T}):=(\mathbb{M}(T(t)),t\geq0)$ exists almost surely.\end{cor}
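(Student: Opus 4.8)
The plan is to extract the claim from the coupling already built in the proof of Theorem \ref{thm:associated mass fragmentation}. Recall that there, on a single probability space, we realized the tree-valued process $\mathbb{T}=(T(t),t\geq0)$ and the mass-fragmentation-valued process $X=(X(t),t\geq0)$ from the \emph{same} Poisson point process $P$ (the atoms of the mass-partition process $S$ decorated with paintboxes) together with the same family $\Sigma$ of uniform permutations, and we verified the recursive identity $\|\tilde\tau^{uj}\|=\tilde\mu^u\sum_{i=1}^k\mu^i s^u_{i,\sigma^u_i(j)}$ at every jump. What is left is to promote that statement, which is almost sure for each fixed $t$, to a single almost-sure event on which $\mathbb{M}(T(t))$ is well-defined for \emph{all} $t\geq0$ simultaneously; this is precisely what ``exists almost surely'' asks for.

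First I would record that the initial tree $\tau$, being infinitely exchangeable, has the property that for each non-singleton $s\in\tau$ the set $\frag(s)$ of children of $s$ is an exchangeable partition of the infinite set $s$, so by Kingman's correspondence (Definition \ref{defn:asymptotic frequencies} and the remarks following it) every block of $\frag(s)$ possesses an asymptotic frequency almost surely; since a fragmentation tree of $\mathbb{N}$ has only countably many vertices, almost surely \emph{all} blocks of $\tau$ simultaneously possess asymptotic frequencies, i.e.\ $\mathbb{M}(\tau)$ is well-defined a.s. Next, the atoms of $P$ in any bounded time interval are finite in number (their count on $[0,T]$ is $\Poi(\lambda T)$) and accumulate only at $+\infty$, so across all $t\geq0$ the path $T(\cdot)$ visits only countably many trees, each obtained from $\tau$ by finitely many applications of the operation $\cp(\cdot,\cdot,\cdot)$. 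Each such application invokes the elementary fact that if $A\subset\mathbb{N}$ has asymptotic frequency $a$ and $B\sim\varrho_s$ is an independent paintbox, then $A\cap B_j$ has asymptotic frequency $as_j$ a.s.\ for each $j$ --- a strong-law statement, since conditionally on $A$ the elements of $A$ are placed in block $j$ of $B$ i.i.d.\ with probability $s_j$. Over the whole construction only countably many such identities arise (countably many atom times, countably many indices $u\in\mathcal{U}$, at most $k$ blocks each), so their intersection is again an almost-sure event.

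On that event the recursion defining $X(t)$ computes exactly the asymptotic frequencies of the blocks of $T(t)$, so $\mathbb{M}(T(t))$ is well-defined and equals $X(t)$ for every $t\geq0$; since $X$ is by construction an $\mathcal{M}_1$-valued process, this shows $\mathbb{M}(\mathbb{T})$ exists almost surely, as claimed. (Equivalently, one may simply observe that the a.s.\ equality $\mu=\mathbb{M}(\mathbb{T})$ obtained inside the proof of Theorem \ref{thm:associated mass fragmentation} already entails that the required asymptotic frequencies exist at all times on a set of full measure.)

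The only genuine obstacle is the bookkeeping in the middle step: ensuring that the ``for all $t$'' conclusion comes from a \emph{countable} union of null sets rather than an uncountable one. This is resolved by the two structural facts used above --- the jump times of $P$ are locally finite, and $\mathcal{U}$ is countable --- so the entire pathwise construction references only countably many paintboxes and hence only countably many asymptotic-frequency identities, and the exceptional set is null.
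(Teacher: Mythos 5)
Your proposal is correct and follows essentially the route the paper intends: the corollary is read off from the coupling in the proof of Theorem \ref{thm:associated mass fragmentation}, where the a.s.\ identity $\mu=\mathbb{M}(\mathbb{T})$ already forces the asymptotic frequencies of all vertices of $T(t)$ to exist simultaneously for all $t$. Your extra bookkeeping (local finiteness of the jump times, countability of $\mathcal{U}$, and the strong-law fact that $A\cap B_j$ has frequency $\|A\|s_j$) is exactly the justification the paper leaves implicit, so there is nothing to add.
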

%
\section{Weighted trees}\label{section:weighted trees}
A {\em weighted tree} is a fragmentation tree with edge lengths.  We write $\bar{\mathcal{T}}:=\mathcal{T}\times\left(\mathbb{R}^+\right)^{\mathcal{U}}$ to denote the space of weighted trees; i.e.\ each $\bar{T}\in\bar{\mathcal{T}}$ is a pair $(T,\{t_b:b\in T\})$ consisting of a fragmentation tree $T$ and a set of edge lengths corresponding to each edge of the tree with the convention that $t_b\equiv0$ if $b\notin T$.  We prefer the term {\em weighted tree} to the alternative {\em fragmentation process} which is generally thought of as a non-increasing sequence of random partitions of $\mathbb{N}$, $B:=(B(t),t\geq0)$, indexed by $t\in\mathbb{R}^+$, i.e.\ $B(t)\leq B(s)$ for all $t\geq s$.  By referring to these objects as weighted trees, we hope to emphasize $\bar{T}\in\bar{\mathcal{T}}$ as an object, rather than a process.  In this way, our construction of a Markov process on $\bar{\mathcal{T}}^{(k)}$ is naturally interpreted as a random walk on this space of objects with only one temporal component, that being how our process on $\bar{\mathcal{T}}^{(k)}$ evolves in time.

In section \ref{section:cut-and-paste ancestral branching} we introduce the $\cp(\nu)$ family of AB transition probabilities $Q_n(T,\cdot;\nu)$ for each $k\geq2$, $T\in\treesnk$ and $\nu$ a probability measure on $\masspartitionk$.  The results of section \ref{section:infinitely exchangeable processes} and \ref{section:equilibrium measure} establish the existence of a transition measure $Q(T,\cdot;\nu)$ on $\treesk$ with infinitely exchangeable stationary measure $\rho(\cdot;\nu)$.

We now construct a transition probability on $\bar{\mathcal{T}}^{(k)}$.  Let $\bar{T}=(T,\{t_b:b\in T\})\in\bar{\mathcal{T}}_n^{(k)}$ and generate $\bar{T}'=(T',\{t'_b:b\in T'\})\in\bar{\mathcal{T}}_n^{(k)}$ by the following two-step procedure.

\paragraph{Ancestral Branching with edge lengths Algorithm}

\begin{enumerate}
	\item[(i)] Generate $T'$ from $Q_n(T,\cdot;\nu)$;
	\item[(ii)] given $T'$, generate each $t'_b$ from an exponential distribution with rate parameter $\theta q_b(\Pi_{T_{|b}},{\bf1}_b;\nu)$ (i.e.\ mean $1/\theta q_b(\Pi_{T_{|b}},{\bf1}_b;\nu)$) independently for each $b\in T'$, for some $\theta>0$.
\end{enumerate}
This procedure yields a transition density on $\bar{\mathcal{T}}_n^{(k)}$ given by 
\begin{equation}\bar{Q}_n(\bar{T},\bar{T}';\nu)=\prod_{b\in T'}\theta p_b(\Pi_{T_{|b}},\Pi_{T'_{|b}};\nu)e^{-\theta t'_b q_b(\Pi_{T_{|b}},{\bf1}_b;\nu)}dt'_b.\label{eq:transition density}\end{equation}
The purpose of choosing each waiting time $t'_b$ to be an exponential random variable with parameter $\theta q_b(\Pi_{T_{|b}},{\bf1}_b;\nu)$ is to ensure the consistency of the process under restriction.

Consider $\bar{T}=(T,\{t_b:b\in T\})$ and $\bar{T}^*=(T^*,\{t^*_b:b\in T^*\})$ such that $T^*\in D^{-1}_{n,n+1}(T)$.  Then $T^*$ has a vertex $A\cup\{n+1\}$ with children $\{n+1\}$ and $A\in T$.  This is the branch of $T$ on which the leaf $\{n+1\}$ is attached.  Denote this vertex by $A^*\in T^*$ and require that $t^*_b=t_b$ for $b\notin\{A^*,A\}$ and $t^*_{A^*}+t^*_A=t_A$.  We denote by $\bar{D}_{n,n+1}^{-1}(\bar{T})$ the set of $\bar{T}^*$ satisfying these conditions.

Consistency requires that for a tree $\bar{T}''\sim\bar{Q}_{n+1}(\bar{T}^*,\cdot;\nu)$, the restriction $\bar{T}':=\bar{T}''_{|[n]}$ is distributed as $\bar{Q}_n(\bar{T}^*_{|[n]},\cdot;\nu)$.

\begin{prop}\label{prop:consistent process tps}Let $\nu$ be a probability measure on $\masspartitionk$, $n\geq1$, $\bar{T}^*\in\bar{\mathcal{T}}_{n+1}^{(k)}$ and $\bar{T}''\sim\bar{Q}_{n+1}(\bar{T}^*,\cdot;\nu)$.  Then the restriction $\bar{T}':=\bar{T}''_{|[n]}$ is distributed as $\bar{Q}_n(T^*_{|[n]},\cdot;\nu)$.\end{prop}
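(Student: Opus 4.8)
The plan is to reduce the statement to facts about the $\cp(\nu)$ kernels that are already in hand: consistency of the tree kernels $Q_n$ (Corollary \ref{cor:consistent Q-matrix}) and consistency of the partition kernels $p_n(\cdot,\cdot;\nu)$. First I would note that $\bar{Q}_n(\bar{T},\cdot;\nu)$ depends on $\bar{T}$ only through the unweighted tree $T$, and that \eqref{eq:transition density} factorizes: sampling $\bar{T}'\sim\bar{Q}_n(T,\cdot;\nu)$ amounts to sampling $T'\sim Q_n(T,\cdot;\nu)$ and then, independently, attaching to every $b\in T'$ with $\#b\geq2$ an exponential length $t'_b$ of rate $\theta q_b(\Pi_{T_{|b}},{\bf1}_b;\nu)$. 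Since $b\subseteq[n]$ forces $(T^*_{|[n]})_{|b}=T^*_{|b}$, the rates prescribed by $\bar{Q}_n(T^*_{|[n]},\cdot;\nu)$ are $\theta q_b(\Pi_{T^*_{|b}},{\bf1}_b;\nu)$. So the proposition splits into: (a) the law of $T''_{|[n]}$ is $Q_n(T^*_{|[n]},\cdot;\nu)$; and (b) conditionally on $T''_{|[n]}=T'$, the restricted edge lengths are independent with $t'_b\sim\Exp(\theta q_b(\Pi_{T^*_{|b}},{\bf1}_b;\nu))$.

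Claim (a) is exactly the consistency of the $\cp(\nu)$ ancestral branching kernels, Corollary \ref{cor:consistent Q-matrix} (via Theorem \ref{thm:consistent branching kernel}, since $\{p_n(\cdot,\cdot;\nu)\}$ is exchangeable and consistent), so no further work is needed there. For (b) I would argue by induction on $n$, using the recursive form $\bar{Q}_{n+1}(T^*,\bar{T}'';\nu)=\theta p_{n+1}(\Pi_{T^*},\Pi_{T''};\nu)e^{-\theta t''_{[n+1]}q_{n+1}(\Pi_{T^*},{\bf1}_{n+1};\nu)}dt''_{[n+1]}\prod_{c\in\Pi_{T''}}\bar{Q}_c(T^*_{|c},\bar{T}''_{|c};\nu)$ obtained by the same splitting that yields \eqref{eq:recursive branching kernel}. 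The key combinatorial point is that the fibre $\bar{D}^{-1}_{n,n+1}$-style description: the fibre $\{T'':T''_{|[n]}=T'\}$ contains exactly one preimage whose restriction collapses the root, namely the $S$ in which $\{n+1\}$ is grafted directly onto the root (so $\Pi_S=\{[n],\{n+1\}\}={\bf e}_{n+1}$ and the subtree below $[n]$ is $T'$); for this $S$ the restricted root edge equals $t''_{[n]}+t''_{[n+1]}$, the merge prescribed by $\bar{D}^{-1}_{n,n+1}$, whereas for every other preimage the root $[n+1]$ of $T''$ merely relabels to $[n]$ and the restricted root edge is $t''_{[n+1]}$; in all cases the singleton edge of $\{n+1\}$ is discarded and integrates out to $1$, and edges off the $\{n+1\}$-ancestral chain are carried over with unchanged rate.

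To finish (b) I would compute the conditional law of the restricted root edge as the $Q_{n+1}(T^*,\cdot;\nu)$-weighted mixture over the fibre, normalized by $\sum_{T''\in\bar{D}^{-1}_{n,n+1}\text{-fibre}}Q_{n+1}(T^*,T'';\nu)=Q_n(T^*_{|[n]},T';\nu)$ (consistency of $Q$), of $\Exp(\theta q_{n+1}(\Pi_{T^*},{\bf1}_{n+1};\nu))$ (mass off $S$) and $\Exp(\theta q_n(\Pi_{T^*_{|[n]}},{\bf1}_n;\nu))\ast\Exp(\theta q_{n+1}(\Pi_{T^*},{\bf1}_{n+1};\nu))$ (mass on $S$). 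Evaluating $Q_{n+1}(T^*,S;\nu)=\frac{p_{n+1}(\Pi_{T^*},{\bf e}_{n+1};\nu)}{q_{n+1}(\Pi_{T^*},{\bf1}_{n+1};\nu)}\,Q_n(T^*_{|[n]},T';\nu)$ from the recursion, the mass on $S$ is $p_{n+1}(\Pi_{T^*},{\bf e}_{n+1};\nu)/q_{n+1}(\Pi_{T^*},{\bf1}_{n+1};\nu)$, while consistency of $p_n(\cdot,\cdot;\nu)$ applied to $D^{-1}_{n,n+1}({\bf1}_n)=\{{\bf1}_{n+1},{\bf e}_{n+1}\}$ gives $q_n((\Pi_{T^*})_{|[n]},{\bf1}_n;\nu)=q_{n+1}(\Pi_{T^*},{\bf1}_{n+1};\nu)-p_{n+1}(\Pi_{T^*},{\bf e}_{n+1};\nu)$. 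A short Laplace-transform computation then shows the mixture collapses to a single $\Exp(\theta q_n(\Pi_{T^*_{|[n]}},{\bf1}_n;\nu))$, the rate prescribed by $\bar{Q}_n(T^*_{|[n]},\cdot;\nu)$, whenever $\Pi_{T^*_{|[n]}}=(\Pi_{T^*})_{|[n]}$ — i.e.\ away from the degenerate initial configuration $\Pi_{T^*}={\bf e}_{n+1}$, which I would handle separately. The restricted edge lengths on the child subtrees $\bar{T}''_{|c}$ follow from the inductive hypothesis (their ground sets have size $\leq n$), and the conditional independence across $c$ built into the factorization, together with the root-edge step, reassembles (b).

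I expect the main obstacle to be precisely this root-edge analysis: isolating the unique preimage that merges two non-degenerate exponential edges, pinning down its $Q_{n+1}$-weight via the recursion, and verifying via Laplace transforms that the \emph{over-fast} exponential from the non-collapsing preimages is exactly offset by the \emph{over-slow} convolution from the collapsing one; the degenerate initial case $\Pi_{T^*}={\bf e}_{n+1}$ also needs a small separate check. Everything else — the tree marginal, the vanishing of the $\{n+1\}$-edge, the invariance of edges off the $\{n+1\}$-chain, and the descent to the child subtrees — is routine given the earlier results and the recursion. An equivalent alternative would be to attach exponential marks to the genealogical indices in the Poisson construction of Section \ref{section:poissonian construction} and read consistency off as a thinning property, but the direct recursive computation seems more transparent.
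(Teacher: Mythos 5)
Your proposal is correct and follows essentially the same route as the paper's proof: reduce to the boolean tree marginal via consistency of $Q_n$, observe that the restricted root edge is $\tau+\tau'\mathbb{I}_A$ with $A=\{\Pi_{T''}={\bf e}_{n+1}\}$ occurring with probability $p_{n+1}(\Pi_{T^*},{\bf e}_{n+1};\nu)/q_{n+1}(\Pi_{T^*},{\bf1}_{n+1};\nu)$, and verify by a Laplace/moment-generating-function computation using $q_n(\Pi_{T^*_{|[n]}},{\bf1}_n;\nu)=q_{n+1}(\Pi_{T^*},{\bf1}_{n+1};\nu)-p_{n+1}(\Pi_{T^*},{\bf e}_{n+1};\nu)$ that the mixture collapses to the required exponential, with the subtrees handled by the branching/induction structure. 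Your explicit flagging of the degenerate case $\Pi_{T^*}={\bf e}_{n+1}$ is a point of care the paper passes over silently, but it does not change the argument.
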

\begin{proof}
Let $\bar{T}^*=(T^*,\{t^*_b:b\in T^*\})\in\bar{\mathcal{T}}_{n+1}^{(k)}$ and $\bar{T}''=(T'',\{t''_b:b\in T''\})\in\bar{\mathcal{T}}_{n+1}^{(k)}$.  By construction of $\bar{Q}_{n}(\cdot,\cdot;\nu)$ on $\bar{\mathcal{T}}_{n}^{(k)}$ for each $n\geq1$, we have that $T''_{|[n]}\sim Q_n(T^*_{|[n]},\cdot;\nu)$ and the induced process on boolean trees is consistent.

Let $t''_{n+1}$ denote the length of the root edge of $\bar{T}''$ and consider the length of the root edge of the restriction $\bar{T}''_{|[n]}$, denoted $t'_n$.  If $\Pi_{T''}\neq{\bf e}_{n+1}$, then $t'_n=t''_{n+1}$.  Otherwise, $t'_n=t''_{n+1}+t''_n$.  Hence, $t'_n\sim\tau+\tau'\mathbb{I}_{A}$ where $\tau$ and $\tau'$ are, respectively, independent exponential random variables with parameters $\theta q_{n+1}(\Pi_{T^*},{\bf1}_{n+1};\nu)$ and $\theta q_n(\Pi_{T^*_{|[n]}},{\bf1}_n;\nu)$ for some $\theta>0$ and $A:=\{\Pi_{T''}={\bf e}_{n+1}\}$, the event that the children of the root $[n+1]$ in $T''$ are $[n]$ and $\{n+1\}$, is independent of $\tau$ and $\tau'$.

For notational convenience, we drop the dependence on $\nu$ and write $q_b(\cdot,\cdot)\equiv q_b(\cdot,\cdot;\nu)$ for any $b\subset\mathbb{N}$, likewise for $p_b(\cdot,\cdot;\nu)$, where $q_n$ and $p_n$ are defined in section \ref{section:cut-and-paste algorithm}.

An exponential random variable with rate parameter $\lambda>0$ has moment generating function $\mathcal{E}_{\lambda}(t):=\lambda/(\lambda-t)$.  The moment generating function of $t'_n$ is
\small
\begin{eqnarray}
\lefteqn{\mathbb{E}e^{t(\tau+\tau'\mathbb{I}_{A})}=}\notag\\
&=&\mathbb{E}e^{t\tau}\mathbb{E}e^{t\tau'\mathbb{I}_{A}}\label{prop:consistent edge length:line1}\\
&=&\frac{\theta q_{n+1}(\Pi_{T^*},{\bf1}_{n+1})}{\theta q_{n+1}(\Pi_{T^*},{\bf1}_{n+1})-t}\left[\mathbb{E}\left(e^{t\tau'\mathbb{I}_{A}}|A\right)\mathbb{P}(A)+\mathbb{E}\left(e^{t\tau'\mathbb{I}_{A}}|A^{c}\right)\mathbb{P}(A^{c})\right]\label{prop:consistent edge length:line2}\\
&=&\frac{\theta q_{n+1}(\Pi_{T^*},{\bf1}_{n+1})}{\theta q_{n+1}(\Pi_{T^*},{\bf1}_{n+1})-t}\left[\frac{p_{n+1}(\Pi_{T^*},{\bf e}_{n+1})}{q_{n+1}(\Pi_{T^*},{\bf1}_{n+1})}\frac{\theta q_n(\Pi_{T^*_{|[n]}},{\bf1}_n)}{\theta q_n(\Pi_{T^*_{|[n]}},{\bf1}_n)-t}+1-\frac{p_{n+1}(\Pi_{T^*},{\bf e}_{n+1})}{q_{n+1}(\Pi_{T^*},{\bf1}_{n+1})}\right]\label{prop:consistent edge length:line3}\\
&=&\frac{\theta q_{n+1}(\Pi_{T^*},{\bf1}_{n+1})}{\theta q_{n+1}(\Pi_{T^*},{\bf1}_{n+1})-t}\left[\frac{p_{n+1}(\Pi_{T^*},{\bf e}_{n+1})\theta q_n(\Pi_{T^*_{|[n]}},{\bf1}_n)+q_{n+1}(\Pi_{T^*},{\bf1}_{n+1})(\theta q_n(\Pi_{T^*_{|[n]}},{\bf1}_n)-t)}{q_{n+1}(\Pi_{T^*},{\bf1}_{n+1})(\theta q_n(\Pi_{T^*_{|[n]}},{\bf1}_n)-t)}-\right.\notag\\
&&\quad\left.-\frac{p_{n+1}(\Pi_{T^*},{\bf e}_{n+1})(\theta q_n(\Pi_{T^*_{|[n]}},{\bf1}_n)-t)}{q_{n+1}(\Pi_{T^*},{\bf1}_{n+1})(\theta q_n(\Pi_{T^*_{|[n]}},{\bf1}_n)-t)}\right]\label{prop:consistent edge length:line4}\\
&=&\frac{\theta q_{n+1}(\Pi_{T^*},{\bf1}_{n+1})}{\theta q_{n+1}(\Pi_{T^*},{\bf1}_{n+1})-t}\left[\frac{q_{n+1}(\Pi_{T^*},{\bf1}_{n+1})\theta q_n(\Pi_{T^*_{|[n]}},{\bf1}_n)-tq_{n+1}(\Pi_{T^*},{\bf1}_{n+1})}{q_{n+1}(\Pi_{T^*},{\bf1}_{n+1})(\theta q_n(\Pi_{T^*_{|[n]}},{\bf1}_n)-t)}\right.\notag\\
&&\quad\left.+\frac{tp_{n+1}(\Pi_{T^*},{\bf e}_{n+1})}{q_{n+1}(\Pi_{T^*},{\bf1}_{n+1})(\theta q_n(\Pi_{T^*_{|[n]}},{\bf1}_n)-t)}\right]\label{prop:consistent edge length:line5}\\
&=&\frac{\theta q_{n+1}(\Pi_{T^*},{\bf1}_{n+1})}{\theta q_{n+1}(\Pi_{T^*},{\bf1}_{n+1})-t}\left[\frac{q_{n+1}(\Pi_{T^*},{\bf1}_{n+1})\theta q_n(\Pi_{T^*_{|[n]}},{\bf1}_n)-tq_n(\Pi_{T^*_{|[n]}},{\bf1}_n)}{q_{n+1}(\Pi_{T^*},{\bf1}_{n+1})(\theta q_n(\Pi_{T^*_{|[n]}},{\bf1}_n)-t)}\right]\label{prop:consistent edge length:line6}\\
&=&\frac{\theta q_n(\Pi_{T^*_{|[n]}},{\bf1}_n)}{\theta q_n(\Pi_{T^*_{|[n]}},{\bf1}_n)-t}\label{prop:consistent edge length:line7}
\end{eqnarray}
\normalsize
the moment generating function of $\tau'$.

Line \eqref{prop:consistent edge length:line1} follows by independence of $\tau, \tau'$ and $A$; \eqref{prop:consistent edge length:line2} uses the tower property of conditional expections; \eqref{prop:consistent edge length:line3} substitutes explicit expressions for the expression in \eqref{prop:consistent edge length:line2}; \eqref{prop:consistent edge length:line5} is obtained from \eqref{prop:consistent edge length:line4} by canceling terms in the numerator; \eqref{prop:consistent edge length:line6} follows \eqref{prop:consistent edge length:line5} by fact that $q_n(\Pi_{T^*_{|[n]}},{\bf1}_n)=q_{n+1}(\Pi_{T^*},{\bf1}_{n+1})-p_{n+1}(\Pi_{T^*},{\bf e}_{n+1})$ by consistency of \eqref{eq:fidi tps}; finally, \eqref{prop:consistent edge length:line7} is obtained by simplifying the expression \eqref{prop:consistent edge length:line6}.

By the branching property of $\bar{Q}_{n}(\cdot,\cdot;\nu)$ we have that the restriction $\bar{T}''_{|[n]}$ is distributed as $\bar{Q}_n(\bar{T}^*_{|[n]},\cdot;\nu)$.
\end{proof}
Finite exchangeability is immediate by inspecting the form of \eqref{eq:transition density}.  The existence of a transition density on $\bar{\mathcal{T}}^{(k)}$ is once again immediate by Kolmogorov's theorem.
\begin{thm}There exists a transition density $\bar{Q}(\cdot,\cdot;\nu)$ on $\bar{T}^{(k)}$ whose finite-dimensional restrictions are given by \eqref{eq:transition density}.\end{thm}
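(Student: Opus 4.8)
The plan is to derive the statement from Kolmogorov's extension theorem, the two required inputs being finite exchangeability and consistency of the family $\{\bar{Q}_n(\cdot,\cdot;\nu):n\ge1\}$ of \eqref{eq:transition density}. Finite exchangeability I would read directly off \eqref{eq:transition density}: the density is a product over the vertices $b\in T'$ whose factors depend only on the root partitions $\Pi_{T_{|b}},\Pi_{T'_{|b}}$ and on the edge length $t'_b$, so a relabelling of $[n]$ relabels source tree, target tree and the indexing set of the edge lengths simultaneously and leaves \eqref{eq:transition density} unchanged. Consistency under the one-step restriction map $\bar{D}_{n,n+1}$ --- which deletes the pendant edge carrying $\{n+1\}$ and transfers its length to the adjacent edge, i.e.\ imposes $t^*_{A^*}+t^*_A=t_A$ with all other lengths unchanged --- is precisely Proposition~\ref{prop:consistent process tps}; combined with finite exchangeability it gives consistency under the map deleting any single leaf, and hence, by composition, under every restriction map $\bar{D}_{m,n}$. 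I would also record that \eqref{eq:transition density} depends on the source $\bar{T}$ only through its tree part $T$, so it suffices to produce, for each $T\in\treesk$, a probability measure on $\bar{\mathcal{T}}^{(k)}$ depending measurably on $T$.

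Granting this, the core step is the observation that $(\bar{\mathcal{T}}_n^{(k)},n\ge1)$ is a projective system of Polish spaces: each $\bar{\mathcal{T}}_n^{(k)}$ is a finite disjoint union over the tree topologies $T\in\treesnk$ of closed Euclidean orthants $(\mathbb{R}^+)^{\#T}$, the connecting maps $\bar{D}_{m,n}$ are continuous and surjective, and the projective limit is $\bar{\mathcal{T}}^{(k)}$. For each compatible sequence $(T_{|[n]},n\ge1)$ the measures $\bar{Q}_n(T_{|[n]},\cdot;\nu)$ then form, by the above, a consistent family of Borel probability measures on this system, so Kolmogorov's extension theorem (in the version valid for projective limits of Polish spaces) yields a unique Borel probability measure $\bar{Q}(T,\cdot;\nu)$ on $\bar{\mathcal{T}}^{(k)}$ whose image under restriction to $[n]$ is $\bar{Q}_n(T_{|[n]},\cdot;\nu)$ for every $n$. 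Measurability of $T\mapsto\bar{Q}(T,\cdot;\nu)$ on the compact metric space $\treesk$ (Proposition~\ref{prop:compact space}) follows from a monotone-class argument on cylinder sets, using that each $\bar{Q}_n$ is a Markov kernel; thus $\bar{Q}(\cdot,\cdot;\nu)$ is a genuine transition density with finite-dimensional restrictions \eqref{eq:transition density} by construction.

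For concreteness I would also exhibit the extension directly, since this explains why the restrictions behave correctly: sample $T'$ from the transition measure $Q(\cdot,\cdot;\nu)$ on $\treesk$ constructed in Section~\ref{section:cut-and-paste ancestral branching}, and conditionally on $T'$ attach independent edge lengths $t'_b\sim\Exp(\theta q_b(\Pi_{T_{|b}},{\bf1}_b;\nu))$, one for each of the countably many $b\in T'$. The tree part of the resulting random weighted tree restricts correctly by consistency of $Q$, and the edge-length part restricts correctly because a surviving edge of $\bar{T}'_{|[n]}$ is a finite (resp.\ at most countable) chain of edges of $\bar{T}'$, each attached at a branching that is trivial on $[n]$, so iterating the exponential-splitting identity underlying Proposition~\ref{prop:consistent process tps} --- namely that an $\Exp(\mu)$ random variable equals in law an $\Exp(\mu')$ variable plus, with probability $(\mu-\mu')/\mu$ and independently, another $\Exp(\mu')$ variable --- keeps each restricted edge length exponential with the rate prescribed by \eqref{eq:transition density} and keeps distinct edges independent.

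The main obstacle, and the reason the statement is not as automatic as in the pure tree case, is that the connecting maps $\bar{D}_{m,n}$ are not coordinate projections: they fuse a pair of edge-length coordinates rather than forgetting one, so the textbook product-space form of Kolmogorov's theorem does not apply verbatim and one must invoke its projective-limit form for Polish spaces. Checking its hypotheses is routine --- the $\bar{\mathcal{T}}_n^{(k)}$ are Polish and the $\bar{D}_{m,n}$ continuous and surjective --- except for consistency of the family, which is exactly Proposition~\ref{prop:consistent process tps}; and the reason consistency holds at all is that step (ii) of the Ancestral Branching with edge lengths Algorithm prescribes the edge rate $\theta q_b(\Pi_{T_{|b}},{\bf1}_b;\nu)$, the unique choice making the exponential-splitting identity match the $\cp(\nu)$ consistency relation $q_n(\Pi_{T^*_{|[n]}},{\bf1}_n)=q_{n+1}(\Pi_{T^*},{\bf1}_{n+1})-p_{n+1}(\Pi_{T^*},{\bf e}_{n+1})$ used in that proposition.
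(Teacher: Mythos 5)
Your proposal is correct and follows the same route the paper takes: finite exchangeability read off the product form of \eqref{eq:transition density}, consistency supplied by Proposition \ref{prop:consistent process tps}, and existence via Kolmogorov's extension theorem (the paper gives no further proof beyond this one-line remark). Your additional care about the projective-limit form of the theorem --- since the maps $\bar{D}_{m,n}$ fuse edge-length coordinates rather than project them away --- and the explicit sampling construction are welcome elaborations of details the paper leaves implicit, not a different argument.
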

The above process on weighted trees for the $\cp(\nu)$-ancestral branching process on $\treesk$ is straightforward to construct, mainly due to the restriction to trees with a bounded number of children, i.e.\ each parent can have no more than $k\geq1$ children.  For this reason, we do not run into issues in our specification related to the accumulation of an infinite number of partition events.  On one hand, this restriction makes the existence of the above process uninteresting probabilistically as we restrict our attention to only a finite number of events.  On the other hand, this provides an explicit, easily implemented, procedure for generating a random sequence of, for example, binary trees, which could be of interest in certain applications.
\section{Discussion}
Here we have shown an explicit construction of a Markov process on $\mathcal{T}$ and $\mathcal{P}$ via, respectively, the ancestral branching and cut-and-paste algorithms, and under what conditions the AB algorithm characterizes the transition probabilities of an infinitely exchangeable tree-valued process.  There is potentially a wealth of interesting work that can be done by exploring this family of processes in more detail.  We provide some details on the ancestral branching process associated with the transition probabilities of the cut-and-paste process with parameter $\nu$, where $\nu$ is a measure on the ranked-$k$ simplex.  In this case, the associated tree-valued process is restricted to $\treesk$.  A process based on a more general form of the cut-and-paste algorithm, which is not restricted to trees with a bounded number of children, could be interesting to study.  However, the case that we study is also interesting, in particular in the case where $k=2$ and we have an infinitely exchangeable process on the space of binary trees.

For the parametric subfamily of the $\cp(\nu)$-process with $\nu=\PD(-\alpha/k,\alpha)$, the finite-dimensional transition probabilities on $\mathcal{T}_n^{(2)}$ for $n\geq1$, $\alpha>0$ and $t,t'\in\mathcal{T}_n^{(2)}$ is given by
$$Q_n(t,t';\alpha)=\prod_{b\in\Pi_{t'}:\#b\geq2}\frac{2\per_{\alpha/2}(B\wedge B')}{\per_{\alpha}B-2\per_{\alpha/2}B},$$
where $\per_{\alpha}B$ represents the $\alpha$-permanent of $B$, regarded as a 0-1 valued boolean matrix.

Implications of this subfamily to inferring unknown phylogenetic trees and also to hidden Markov modeling in a genetic framework are potentially viable applications of this process.  Furthermore, the $\cp(\alpha,k)$ subfamily is known to be reversible with respect to the Pitman-Ewens family of distributions with parameter $(-\alpha,k\alpha)$, yet it is not immediately clear whether this has implications for the equilibrium measure of the associated $\cp(\alpha,k)$-ancestral branching process.  Connections between these equilibrium measures, and their relationship to Aldous's continuum random tree \cite{AldousCRTI} are of interest in this space.
\bibliography{crane-refs}
\bibliographystyle{abbrv}

\end{document}